\newtheorem{theorem}{Theorem}[section]
\newtheorem{lemma}[theorem]{Lemma}
\newtheorem{corollary}[theorem]{Corollary}
\newtheorem{conjecture}{Conjecture}
\theoremstyle{definition}
\newtheorem{remark}[theorem]{Remark}
\newtheorem*{ack}{Acknowledgement}
\newcounter{thmenumerate}
\newcommand{\aex}{a.e.\spacefactor=1000}
\newcommand\noproof{\qed}   
\newcommand\Tk{T_{\ka}}
\newcommand\bp{{\mathfrak X}}
\newcommand\eps{\varepsilon}
\newcommand\sss{{\mathcal S}}
\newcommand\dd{\,d}
\newcommand\norm[1]{\ensuremath{\|#1\|}}
\newcommand\tn[1]{\ensuremath{\|#1\|_2}}
\newcommand\bb[1]{\bigl(#1\bigr)}
\newcommand\upto{\nearrow}
\newcommand{\refT}[1]{Theorem~\ref{#1}}
\newcommand{\refR}[1]{Remark~\ref{#1}}
\newcommand{\refC}[1]{Corollary~\ref{#1}}
\newcommand{\refL}[1]{Lemma~\ref{#1}}
\newcommand{\refS}[1]{Section~\ref{#1}}
\newcommand{\refSS}[1]{Subsection~\ref{#1}}
\newcommand\E{{\mathop{\mathbb E{}}\nolimits}}
\renewcommand\Pr{{\mathop{\mathbb P{}}\nolimits}}
\newcommand\ka{\kappa}
\newcommand\la{\lambda}
\newcommand\F{{\mathcal F}}
\newcommand\T{{\mathcal T}}
\newcommand\bpk{{\mathfrak X}_{\ka}}
\newcommand\kae{{\ka_{\mathrm e}}}
\newcommand\kf{{\undertilde{\ka}}}
\newcommand\pto{\overset{\mathrm{p}}{\to}}
\newcommand\aut{\operatorname{aut}}
\newcommand\cc{{\mathrm{c}}}
\newcommand\dcut{{\delta_{\square}}}
\newcommand\dcutone{{\delta_{\square,1}}}
\newcommand\dcuttwo{{\delta_{\square,2}}}
\newcommand\set[1]{\ensuremath{\{#1\}}}
\newcommand\bigpar[1]{\bigl(#1\bigr)}
\newcommand\Bigpar[1]{\Bigl(#1\Bigr)}
\newcommand\bigabs[1]{\bigl|#1\bigr|}
\newcommand\Bigabs[1]{\Bigl|#1\Bigr|}
\newcommand\lrabs[1]{\left|#1\right|}
\newcommand\oi{[0,1]}
\newcommand\floor[1]{\lfloor #1\rfloor}
\newcommand\ntoo{\ensuremath{{n\to\infty}}}
\newcommand\mtoo{\ensuremath{{m\to\infty}}}
\newcommand\cF{\mathcal F}
\newcommand\cW{\mathcal W}
\newcommand\ga{\alpha}
\newcommand\Var{\operatorname{Var}}
\newcommand\ett[1]{\boldsymbol1[#1]} 
\newcommand\qw{^{-1}}
\newcommand\op{o_{\mathrm{p}}}
\newcommand\gl{\lambda}
\newcommand\go{\omega}
\newcommand\nkt{N_k^{\mathsf t}}
\newcommand\nkc{N_k^{\mathsf c}}
\renewcommand{\=}{:=}
\newcommand\bignorm[1]{\bigl\|#1\bigr\|}
\newcommand\Bignorm[1]{\Bigl\|#1\Bigr\|}
\newcommand\ooo{[0,\infty)}
\newcommand\rplus{[0,\infty)}
\newcommand\glw{\gl_W}
\newcommand{\tx}{t_{\mathrm{isol}}}
\newcommand{\ty}{t_{0}}
\newcommand{\tz}{t_{1}}
\newcommand\cn[1]{\norm{#1}\cut}
\newcommand\cntwo[1]{\norm{#1}_{\square,2}}
\newcommand\cnone[1]{\norm{#1}_{\square,1}}
\newcommand\cutnorm{\cn}
\newcommand\cut{_{\square}}
\newcommand\sij{_{ij}}
\newcommand\ab{^{(a,b)}}
\newcommand\wab{W\ab}
\newcommand{\cw}{\cW}
\newcommand{\cwsR}{\cW_{\mathrm{sym}}^{(R)}}
\newcommand{\cws}{\cW_{\mathrm{sym}}}
\newcommand{\cwa}{\cW_{\BB}}
\newcommand{\cwrA}{\cW_{r,\BB}}
\newcommand\liss{_{L^1(\sss^2)}}
\newcommand\lis{_{L^1(\sss)}}
\newcommand\kam{\ka^-}
\newcommand\emi{e^-}
\newcommand\vxs{{\mathcal V}}
\newcommand\xss{({\bf x}_n)}
\newcommand\xs{{\bf x}}
\newcommand\whp{whp}
\newcommand\on[1]{\norm{#1}_{L^1}}
\newcommand\onb[1]{\Bignorm{#1}_{L^1}}
\newcommand\sn[1]{\norm{#1}_{\infty}}
\newcommand\rn{\tau_n}
\newcommand\karn{{\ka^{(\rn)}}}
\newcommand\bv{{\bf v}}
\newcommand\isom{\cong}
\newcommand\cwr{\cW_r}
\newcommand\cwrs{\cW_{r,\mathrm{sym}}}
\newcommand\uW{\undertilde{W}}
\newcommand\ta{{(\tau)}}
\newcommand\hir{h_{i_1i_2\ldots i_r}}
\newcommand\hW{\widehat W}
\newcommand\hka{\widehat\kappa}
\newcommand\setdiff{\bigtriangleup}
\newcommand\ba{{\bf a}}
\newcommand\bw{{\bf w}}
\newcommand\tW{{\widetilde W}}
\newcommand\dt{{\mathrm d}t}
\newcommand\HH{{\mathcal H}}
\newcommand\BB{B}
\newcommand\Gp{G_{\mathrm{Po}}}
\newcommand\Gpm{G_{\mathrm{Po}}^{\mathrm{m}}}
\newcommand\VW{V}
\newcommand\vw{v}
\newcommand\wst{\overset{\mathrm{w*}}{\longrightarrow}}
\newcommand\rhos{{\rho^*}}
\newcommand\iid{i.i.d.}
\begin{document}
\title{The cut metric, random graphs, and branching processes}

\author{B\'ela Bollob\'as\thanks{Department of Mathematical Sciences,
University of Memphis, Memphis TN 38152, USA}
\thanks{Trinity College, Cambridge CB2 1TQ, UK}
\thanks{Research supported in part by NSF grants DMS-0906634,
 CNS-0721983 and CCF-0728928, and ARO grant W911NF-06-1-0076}
\and Svante Janson%
\thanks{Department of Mathematics, Uppsala University,
 PO Box 480, SE-751 06 Uppsala, Sweden}
\and Oliver Riordan%
\thanks{Mathematical Institute, University of Oxford, 24--29 St Giles', Oxford OX1 3LB, UK}}
\date{January 14, 2009; revised February 06, 2010}
\maketitle

\begin{abstract}

In this paper we study the component structure of random graphs with
independence between the edges.  Under mild assumptions, we determine
whether there is a giant component, and find its asymptotic size when it  
exists.  We assume that the sequence of matrices of edge probabilities
converges to an appropriate limit object (a kernel), but only in a very
weak sense, namely in the cut metric.  Our results thus generalize
previous results on the phase transition in the already very general  
inhomogeneous random graph model introduced by the present authors
in~\cite{kernels}, as well as related results of Bollob\'as, Borgs, Chayes
and Riordan~\cite{QRperc}, all of which involve considerably stronger
assumptions. We also prove corresponding results for random hypergraphs;    
these generalize our results on the phase transition in inhomogeneous   
random graphs with clustering~\cite{clustering}.

\end{abstract}

\section{Introduction and results}

Throughout this paper we consider random graphs with independence
between the edges. The distribution of a random $n$-vertex graph with
this property
is of course specified by the matrix of edge probabilities; here we
are interested in the asymptotic behaviour of the component structure
as $n\to\infty$, so we shall consider a sequence of such matrices.
Our main focus is to determine when there is whp a giant component, i.e., a
component containing $\Theta(n)$ vertices. Here, as usual,
an event holds {\em with high probability}, or {\em whp}, if it holds with
probability $1-o(1)$ as $n\to\infty$.
When there is a giant component, we shall also find its asymptotic size.

For these questions it is natural to focus on (extremely)
{\em sparse} graphs, with $\Theta(n)$ edges, so we shall
normalize by considering matrices $A_n$ whose entries are
$n$ times the corresponding edge probabilities. Thus the case in which
each $A_n$ has all (off-diagonal) entries equal to some $c>0$
corresponds to the classical sparse model $G(n,c/n)$.
Without some further assumptions, it seems difficult
to prove asymptotic results, although Alon \cite{Alon} did so 
for some questions concerning connectedness.
As in previous work, the natural additional assumption turns
out to be convergence to a suitable limiting object,
namely a {\em kernel}, i.e., a symmetric non-negative
function on $[0,1]^2$. Our aim is to relate the asymptotic
size of the giant component to a suitable function
of this kernel.

The aim described above was also one of the aims of~\cite{kernels},
and of Bollob\'as, Borgs, Chayes and Riordan~\cite{QRperc}.
We shall prove a common generalization of
the corresponding results from these papers by weakening
the assumptions: we shall work with convergence in the cut metric 
(defined below) as in~\cite{QRperc}, while allowing
unbounded matrices and kernels, as in~\cite{kernels}.
It turns out that these very weak, natural assumptions suffice
to allow us to relate the giant component
of the random graph to the kernel.

To state our results we shall need a few definitions.
By a {\em kernel} on $[0,1]$ we simply mean an integrable,
symmetric function $\ka:[0,1]^2\to \rplus$. We regard kernels as elements of $L^1$,
so two kernels that are equal almost everywhere are considered to be the same.

Throughout, $A_n$ will denote a symmetric $n$-by-$n$ matrix with non-negative entries.
If $A_n=(a\sij)$ is such a matrix, then there
is a piecewise constant kernel $\ka_{A_n}$ naturally associated to $A_n$:
this takes the value $a_{ij}$ on the square $((i-1)/n,i/n]\times ((j-1)/n,j/n]$.
We call $\ka$ an {\em $n$-by-$n$ kernel} if it is of the form $\ka_{A_n}$ for
some $A_n$.

There is a (sparse) random graph naturally associated to $A_n$, namely the graph
$G(A_n)=G_{1/n}(n,A_n)$. This graph has vertex set $[n]=\{1,2,\ldots,n\}$,
the events that different edges are present are independent,
and the probability that $ij$ is present is $\min\{a_{ij}/n,1\}$.
If some of the $a_{ii}$ are non-zero then $G(A_n)$ may contain loops; this will be irrelevant
for us here, since we study only the component structure of $G(A_n)$.
Often, it is convenient to consider minor variants of these definitions:
in the {\em Poisson multi-graph} variant, $\Gpm(A_n)$, the number of copies
of each possible edge $ij$ is Poisson with mean $a_{ij}/n$. In the {\em Poisson
simple graph} variant, $\Gp(A_n)$, the probability that $ij$ is present is $1-\exp(-a_{ij}/n)$;
in both cases the numbers of copies of different edges are independent.
Thus $\Gp(A_n)$ is the simple graph underlying $\Gpm(A_n)$. Most of the time
it makes no difference which variant we consider. Indeed, 
whenever $a_{ij}<n/2$, say, for all $i$ and $j$, then
\begin{equation}\label{convert}
 G(A_n) =_{\mathrm{d}} \Gp(A_n')
\end{equation}
where $=_{\mathrm{d}}$ denotes equality in distribution, and $A_n'$ is the matrix with entries
\begin{equation}\label{cve}
 a_{ij}' = - n\log(1-a_{ij}/n) = a_{ij} +O(a_{ij}^2/n).
\end{equation}
In the typical case considered here, the entries $a_{ij}$ are small
compared to $n$, 
so switching
between $G(\cdot)$ and $\Gp(\cdot)$ thus corresponds to a minor change in
the edge probability parameters.
Moreover, under the rather weak assumptions $\max_{ij} a_{ij}<n/2$ and
$\sum_{i,j=1}^na_{ij}^3=o(n^3)$, the random graphs 
$G(A_n)$ and $\Gp(A_n)$ are asymptotically equivalent in the strong
sense that they can be coupled so that they are equal \whp;
see \cite[Corollary 2.13]{SJ212}.

Having described the limit object (a kernel), and the random graph,
it remains to describe the notion of convergence. In doing so
it is convenient to consider somewhat more general kernels.

Let $(\sss,\mu)$ be a probability space; most of the time we shall
take $\sss$ to be $[0,1]$ (or $(0,1]$) with $\mu$ Lebesgue measure.
A {\em kernel} on $\sss$ is
an integrable, symmetric function $\ka:\sss^2\to \rplus$. 
Following Frieze and Kannan~\cite{FKquick}, for $W\in L^1(\sss^2)$ we define
the {\em cut norm} $\cn{W}$ of $W$ by
\begin{equation}\label{cnodef}
 \cnone W \= \sup_{S,T}
  \Bigabs{\int_{S \times T} W(x,y) \dd\mu(x)\dd\mu(y)},
\end{equation}
where the supremum is taken over all pairs of measurable subsets of $\sss$.
Alternatively, one can take
\begin{equation}\label{cntdef}
 \cntwo W \= \sup_{\sn{f},\sn{g}\le1}
  \Bigabs{\int_{\sss^2} f(x)W(x,y)g(y)\dd\mu(x)\dd\mu(y)}.
\end{equation}
In taking the supremum in \eqref{cntdef} one can restrict to functions
$f$ and $g$ taking only the values $\pm 1$; it follows that
\[
 \cnone{W} \le \cntwo{W} \le 4\cnone{W}.
\]
Thus the two norms $\cnone{\cdot}$ and $\cntwo{\cdot}$ are equivalent,
and it will almost never matter which one we use.
We shall write $\cn{\cdot}$ for either norm, commenting
in the few cases where the choice matters.
(There are further, equivalent versions of the cut-norm; see
Borgs, Chayes, Lov\'asz, S\'os and Vesztergombi~\cite{BCLSV:1}.)

Note that for either definition of the cut norm we have
\[
 \Bigabs{\int W} \le \cn{W} \le \on{W}.
\]

The definition \eqref{cntdef} is natural for a functional analyst:
this norm is the dual of the projective tensor product norm in
$L^\infty\hat\otimes L^\infty$, and is thus the injective tensor
product norm in $L^1\check\otimes L^1$; equivalently, it is equal to the 
operator norm of the corresponding integral operator $L^\infty\to
L^1$.
One advantage of this version
is the simple ``Banach module'' property we shall note later in \eqref{cutnorm2}.
On the other hand, \eqref{cnodef} is probably more familiar in combinatorics,
and (surprisingly) occasionally has a tiny advantage; see \refS{sec_hyp}.

Given a kernel $\ka$ and a measure-preserving bijection $\tau:\sss\to\sss$,
let $\ka^\ta$ be the kernel defined by
\[ 
 \ka^\ta(x,y) = \ka(\tau(x),\tau(y));
\]
we call $\ka^\ta$ a {\em rearrangement} of $\ka$. We write $\ka\sim \ka'$ if $\ka'$ is a rearrangement
of $\ka$.
Given two kernels $\ka$, $\ka'$ on $[0,1]$,
the {\em cut metric} of
Borgs, Chayes, Lov\'asz, S\'os and Vesztergombi~\cite{BCLSV:1}
is defined by
\begin{equation}\label{cutdef}
 \dcut(\ka,\ka') = \inf_{\ka''\sim \ka'} \cn{\ka-\ka''}.
\end{equation}
If we wish to specify which version of the cut norm is involved, we write $\dcutone$
or $\dcuttwo$. Usually, this is irrelevant. 

As in~\cite{BCLSV:1}, one
can also define $\dcut$ using couplings between different kernels, rather
than rearrangements. In this case it is irrelevant that the kernels
are on the same probability space.
In particular, we may regard a matrix $A_n$ as a kernel on the
discrete space with $n$ equiprobable elements. Then (by an obvious coupling)
$\dcut(A_n,\ka_{A_n})=0$, where $\ka_{A_n}$ is the $n$-by-$n$ kernel
on $[0,1]$ corresponding to $A_n$. Thus 
$\dcut(A_n,\ka)=\dcut(\ka_{A_n},\ka)$ for any kernel $\ka$ on any
probability space $(\sss,\mu)$.
In the light of this we shall often
identify a matrix with the corresponding kernel on $[0,1]$.

Throughout this paper, we shall consider sequences $(A_n)$ of matrices such that
for some kernel $\ka$ we have
$\dcut(A_n,\ka)\to 0$. 
It follows from the results of~\cite{SJ210} that for any kernel $\ka$ on a
probability space $(\sss,\mu)$, there exists a kernel $\ka'$ on $\oi$
with $\dcut(\ka,\ka')=0$.
Hence we lose no generality by taking $(\sss,\mu)$ to be the 
{\em standard ground space} in which $\sss=[0,1]$ (or $(0,1])$
and $\mu$ is  Lebesgue measure.
In this case
it is natural to identify $A_n$ with $\ka_{A_n}$ as above, and we may
use the more down-to-earth formula \eqref{cutdef} as the definition of $\dcut$.

To state our results we need two further definitions, from \cite{kernels}.
Given a kernel $\ka$ on a probability space $(\sss,\mu)$,
let $\bpk$ be the multi-type Galton--Watson
branching process defined as follows.
We start with a single particle in generation $0$, whose type has the distribution
$\mu$. A particle in generation $t$ of type $x$ gives rise to children in generation
$t+1$ whose types form a Poisson process on $\sss$ with intensity $\ka(x,y)\dd\mu(y)$.
The children of different particles are independent, and independent of the history.

We shall also consider the branching processes $\bpk(x)$, $x\in \sss$,
defined  as above except that $\bpk(x)$ starts with a
single particle of the given type $x$.

Let $\rho(\ka)$ denote the {\em survival probability} of $\bpk$, i.e., the probability
that all generations are non-empty. 
It is easily seen that this is the same
as the probability that the total number $|\bpk|$ of particles in $\bpk$ is infinite.
For basic results about $\rho(\ka)$, we refer the reader to~\cite{kernels}.

Finally, as in~\cite{kernels}, a kernel $\ka$ is {\em reducible}
if there exists $A\subset \sss$ with $0<\mu(A)<1$ such that $\ka$
is zero almost everywhere on $A\times (\sss\setminus A)$.
Otherwise, $\ka$ is {\em irreducible}.

Throughout, we use standard notation for probabilistic asymptotics
as in~\cite{JLR}. For example, $\pto$ denotes convergence in probability,
and $X_n=\op(f(n))$ means $X_n/f(n)\pto 0$.

\subsection{Main results}\label{ss_res}

In this subsection we state our main results; we shall give corresponding
results for hypergraphs in Section~\ref{sec_hyp}.
Recall that any matrix denoted by $A_n$
is assumed to be a symmetric $n$-by-$n$ matrix with non-negative entries.
Given a graph $G$ and an $i\ge 1$, we write $C_i(G)$ for the number of vertices
in the $i$th largest component of $G$, with $C_i(G)=0$ if $G$ has fewer
then $i$ components.
We shall see later that our results imply corresponding results for the Poisson
variants of $G(A_n)$; for simplicity we state them only in the original formulation,
where the edge probabilities are $\min\{a_{ij}/n,1\}$. 
The theorems are valid for a kernel $\ka$ on any probability space
$(\sss,\mu)$, but as noted above we may assume without loss of generality
that $\sss=\oi$, and we shall do so in the proofs for
convenience.

\begin{theorem}\label{th1}
Let $\ka$ be a kernel and $(A_n)$
a sequence of symmetric non-negative $n$-by-$n$ matrices such that
$\dcut(A_n,\ka)\to 0$.
Then $C_1(G(A_n))/n \le  \rho(\ka)+\op(1)$. If $\ka$ is irreducible, then
$C_1(G(A_n))/n\pto \rho(\ka)$ and $C_2(G(A_n))=\op(n)$. 
\end{theorem}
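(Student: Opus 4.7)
The plan is to reduce to the bounded-kernel case, to which the results of~\cite{QRperc} apply, by truncation, and then transfer back. By the discussion preceding the theorem we may assume $(\sss,\mu)=(\oi,\text{Lebesgue})$ and, after rearranging, that $\ka_{A_n}\to\ka$ in cut norm (identifying $A_n$ with its associated step kernel). By~\eqref{convert}--\eqref{cve} we may also replace $G(A_n)$ by $\Gp(A_n)$, treating the $O(1)$ vertices incident to entries $a_{ij}>n/2$ separately.

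For $M>0$ set $\kaM=\ka\wedge M$ and $A_n^M=A_n\wedge M$. Since $t\mapsto t\wedge M$ is $1$-Lipschitz, truncation is a contraction in $L^1$ and hence in the cut norm, so $\dcut(A_n^M,\kaM)\to 0$. A standard monotone-convergence argument for the branching process gives $\rho(\kaM)\nearrow\rho(\ka)$, and if $\ka$ is irreducible then so is $\kaM$ for all sufficiently large $M$. These observations reduce each claim to its analogue for bounded kernels.

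For the lower bound when $\ka$ is irreducible, edge monotonicity gives $\Gp(A_n^M)\subseteq\Gp(A_n)$, so $C_1(\Gp(A_n))\ge C_1(\Gp(A_n^M))$; applying~\cite{QRperc} to $(A_n^M)$ and $\kaM$ yields $C_1(\Gp(A_n^M))/n\pto\rho(\kaM)$, and letting $M\to\infty$ completes the lower bound. For uniqueness $C_2(\Gp(A_n))=\op(n)$, we sprinkle: write $A_n=(1-\eps)A_n+\eps A_n$, expose the two parts independently, apply the lower bound to the first part, and use the irreducibility of $\ka$ to show that the sprinkled edges \emph{whp} merge all but $\op(n)$ vertices of the large components into a single giant. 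Since $\rho((1-\eps)\ka)\to\rho(\ka)$ as $\eps\to 0$, this gives the matching lower bound and uniqueness.

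For the upper bound (no irreducibility needed), a standard exploration argument shows that for fixed $k$, the fraction of vertices of $\Gp(A_n)$ in components of size $\ge k$ is at most $\rhogek(\ka_{A_n})+\op(1)$, where $\rhogek$ denotes the probability that the total progeny of $\bpk$ is at least $k$. After truncation, the first $k$ generations of the BP depend on $\kaM$ through finitely many tree densities, which are continuous under cut-metric convergence for bounded kernels; the contribution of the tail $A_n-A_n^M$ is absorbed since $\int(A_n-M)_+\to\int(\ka-M)_+\to 0$ as $n\to\infty$ then $M\to\infty$. Sending $k\to\infty$ and using $\rhogek(\ka)\to\rho(\ka)$ finishes the upper bound. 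The \emph{main obstacle} is precisely this step: the component exploration is only approximately a branching process, so one must ensure that a few large entries of $A_n$ cannot create large components undetected by the limiting kernel; truncation together with the $L^1$-control $\int A_n\to\int\ka$ inherited from cut-norm convergence is what makes this possible.
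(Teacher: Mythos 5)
There is a genuine gap in the first step, and it undermines the whole reduction. You claim that ``since $t\mapsto t\wedge M$ is $1$-Lipschitz, truncation is a contraction in $L^1$ and hence in the cut norm, so $\dcut(A_n^M,\kaM)\to 0$.'' The ``hence'' is false: a map that is a contraction with respect to $\on{\cdot}$ need not be a contraction with respect to $\cn{\cdot}$, since the only comparison available is $\cn{W}\le\on{W}$, which points the wrong way. Fixed-level truncation does \emph{not} preserve cut-norm convergence; the paper flags exactly this after the proof of \refT{th_L1}: ``the relation equivalent to \eqref{contr} for the cut norm rather than the $L^1$ norm does not hold in general.'' A counterexample is built into the paper's own application (\refT{th_qrs}): take $A_n=(1/p)\,A(G_n)$ for a sparse quasi-random sequence $(G_n)$ of edge density $p=p(n)\to0$ with $1/p=o(n)$, so $\dcut(A_n,1)\to0$. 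For any fixed $M$, once $1/p>M$ one has $A_n^M=M\cdot A(G_n)$, whence $\on{\ka_{A_n^M}}=Mp\to 0$ and thus $\dcut(A_n^M,0)\to 0$, while $\kaM=1\wedge M=1$. So $\dcut(A_n^M,\kaM)\not\to0$, your lower bound via~\cite{QRperc} yields only $C_1/n\ge \op(1)$, and the limit $M\to\infty$ gives nothing. The same example also kills the claim $\int(A_n-M)_+\to\int(\ka-M)_+$: here $\int(\ka_{A_n}-M)_+=(1/p-M)p\to 1\ne 0$ for $M\ge1$.

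This is precisely the central difficulty the paper has to circumvent. \refL{l_on} shows that under $\dcut(A_n,\ka)\to0$ one can truncate at an $n$-dependent level $M(n)=o(n)$: only $o(n)$ entries of $A_n$ exceed $M(n)$ and their total is $o(n^2)$, so the truncation perturbs the associated kernel by $o(1)$ in $L^1$ and hence preserves cut convergence. But the truncated matrices are then bounded only by $M(n)\to\infty$, not uniformly, so one still cannot invoke the bounded-kernel results of~\cite{QRperc}. Instead the paper proves (\refT{th_FW}) that the tree integrals $\tx(F,W)$ are Lipschitz in the cut norm on all of $\cws$, i.e.\ \emph{without} a uniform bound, and uses this in \refL{l_Nk} to get the vertex counts in small components right; the giant is then assembled by the sprinkling argument of \refL{l_join}, whose proof is likewise carried out without a uniform bound on the entries. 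Your sprinkling step for uniqueness is in the right spirit, but it depends on first having the lower bound, which you deduced from the false truncation claim.
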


Of course, as usual
we do not require $A_n$ to be defined for every $n$, only for a subsequence.

Let $\rho_\ka(x)$ denote the survival probability of the process $\bpk(x)$
started with a particle of type $x$. Let $\Tk$ be the
integral operator on $\sss$ with kernel $\ka$,
defined by
\begin{equation}  \label{tk}
(\Tk f)(x)=\int_\sss \ka(x,y)f(y)\dd\mu(y),
\end{equation}
for any (measurable) function $f$ such that this integral is defined (finite or
$+\infty$) for \aex{} $x$.
Note that this class of functions includes every (measurable) function $f\ge 0$.
Also, let
\[
 \norm{\Tk} = \sup\bigl\{\tn{\Tk f}: \tn{f}\le1,\, f\ge 0\bigr\} \le\infty;
\]
clearly if $\norm{\Tk}<\infty$, then
$\norm{\Tk}$ is simply the norm of $\Tk$ as an operator on $L^2(\sss,\mu)$.

Recall from~\cite[Theorem 6.2]{kernels} that $\rho(\ka)>0$ if and only
if $\norm{\Tk}>1$,
and that if $\norm{\Tk}>1$, then $\rho_\ka$ is the unique non-zero solution $f\ge 0$
to the functional equation
\[
 f = 1-\exp(-\Tk f).
\]

Using \refT{th1}, we shall deduce the following slight extension,
describing the `critical' value of $c$ above which a giant component
appears in $G(c A_n)$.

\begin{theorem}\label{th2}
Let $\ka$ be a kernel, $(A_n)$ a sequence of symmetric non-negative $n$-by-$n$ matrices such that
$\dcut(A_n,\ka)\to 0$, and $c>0$ a constant, and set $G_n=G(cA_n)$.
\begin{enumerate}
 \renewcommand{\labelenumi}{\textup{(\alph{enumi})}}%
 \renewcommand{\theenumi}{\textup{(\alph{enumi})}}%

\item
 If $c\le \norm{\Tk}^{-1}$, then $C_1(G_n)=\op(n)$.

\item 
If $c>\norm{\Tk}^{-1}$, then $C_1(G_n)=\Theta(n)$ \whp.
Furthermore, if $\ka$ is bounded,
then for any constant $\alpha<(c\norm{\Tk}-1)/(c\sup\ka)$
we have $C_1(G_n)\ge \alpha n$ \whp.
\item 
If $\ka$ is irreducible, then $C_1(G_n)/n\pto \rho(c\ka)$
and $C_2(G_n)=\op(n)$.
\end{enumerate}
\end{theorem}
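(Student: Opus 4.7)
Both follow immediately from \refT{th1} applied with $(cA_n, c\ka)$ in place of $(A_n, \ka)$: note that $\dcut(cA_n, c\ka) = c\,\dcut(A_n, \ka) \to 0$ and $T_{c\ka} = cT_\ka$. For (a), the hypothesis $c \le \norm{\Tk}^{-1}$ reads $\norm{T_{c\ka}} \le 1$, so by the criterion recalled just before \refT{th2} (\cite[Theorem 6.2]{kernels}) we have $\rho(c\ka) = 0$; the upper bound $C_1(G_n)/n \le \rho(c\ka) + \op(1)$ furnished by \refT{th1} then gives $C_1(G_n) = \op(n)$. For (c), $c\ka$ inherits irreducibility from $\ka$, so the irreducible part of \refT{th1} yields both $C_1(G_n)/n \pto \rho(c\ka)$ and $C_2(G_n) = \op(n)$.

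\textbf{Part (b), bounded case.} Assume $\ka$ is bounded and fix $\alpha < (c\norm{\Tk} - 1)/(c\sup\ka)$. Since $\ka$ is bounded on the probability space $(\sss,\mu)$, $\Tk$ is Hilbert--Schmidt, hence compact and self-adjoint, so $\lambda := \norm{\Tk}$ is attained by a non-negative eigenfunction $\psi$ with $\tn\psi = 1$ and $\Tk\psi = \lambda\psi$; the identity $\psi = \Tk\psi/\lambda$ gives $\sn\psi \le \sup\ka/\lambda$. I would use $\psi$ to isolate the irreducible component $S_0 \subset \sss$ carrying it, on which the restriction $\widetilde\ka$ of $c\ka$ is an irreducible bounded kernel with $\norm{T_{\widetilde\ka}} = c\lambda > 1$; correspondingly I would restrict $A_n$ to the sub-matrix $\widetilde A_n$ indexed by the vertices whose intervals $((i-1)/n, i/n]$ fall essentially inside the preimage of $S_0$ under a rearrangement $\tau_n$ witnessing $\dcut(A_n, c\ka) \to 0$. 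Then $G(\widetilde A_n)$ couples as a subgraph of $G_n$, with $\dcut(\widetilde A_n, \widetilde\ka) \to 0$, so the irreducible clause of \refT{th1} (applied on $S_0$ with the appropriately renormalized measure) gives $C_1(G_n) \ge C_1(G(\widetilde A_n)) = \mu(S_0)\,\rho(\widetilde\ka)\,n + \op(n)$. The numerical bound $\mu(S_0)\,\rho(\widetilde\ka) \ge (c\lambda - 1)/(c\sup\ka)$ will come from testing the fixed-point identity $\rho_{\widetilde\ka} = 1 - \exp(-T_{\widetilde\ka}\rho_{\widetilde\ka})$ against $\psi$, using $1 - e^{-v} \ge v(1 - v/2)$ together with $\sn\psi \le \sup\ka/\lambda$ to control the quadratic correction.

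\textbf{Part (b), unbounded case, and main obstacle.} For unbounded $\ka$ with $c\norm{\Tk} > 1$, truncate: setting $\ka^M := \ka \wedge M$, monotone convergence of the quadratic form $\langle f, T_\ka f\rangle$ on non-negative $f$ gives $\norm{T_{\ka^M}} \upto \norm{\Tk}$ as $\Mtoo$, so $c\norm{T_{\ka^M}} > 1$ for some $M$. One then aims to dominate $G_n$ from below by a graph built from a modified matrix $\widehat A_n^M \le cA_n$ (ideally $cA_n \wedge cM$) satisfying $\dcut(\widehat A_n^M, c\ka^M) \to 0$, and invoke the bounded case. The main obstacle in both halves of part (b) is that the cut norm does not interact smoothly with pointwise operations: $\dcut(A_n, c\ka) \to 0$ does not formally entail $\dcut(A_n \wedge M, c\ka^M) \to 0$, nor cut convergence of a sub-matrix restricted to a subset of rows and columns. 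Handling this requires working explicitly with the rearrangements $\tau_n$ witnessing cut convergence, and exploiting the $L^\infty$ bounds (on $\psi$, or on the truncation level $M$) to convert cut-norm proximity into proximity of the weighted quadratic forms $\langle \psi, T_{A_n}\psi\rangle$ that actually detect supercriticality.
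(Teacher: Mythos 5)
Parts (a) and (c) are handled exactly as in the paper (which simply reduces to $c=1$ by replacing $\ka$ with $c\ka$ and then invokes \refT{th1}), so those are fine.

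For part (b), your high-level strategy is the right one — restrict to an irreducible sub-piece on which $\norm{T}>1$, apply the irreducible clause of \refT{th1} there, and lift the giant component back up — and you correctly identify that the whole argument hinges on one hard step: showing that a matrix sequence converging in $\dcut$ to a reducible kernel can be cut down, row-and-column-wise, to a sub-matrix sequence that converges in $\dcut$ to the restriction of the kernel to an irreducible part. But you flag this as an obstacle rather than resolving it, and this is precisely the content of the paper's \refL{split} (together with the decomposition of a reducible kernel into irreducible pieces from \cite{kernels}). That lemma is the genuine technical heart of the proof: it takes the rearrangements $\tau_n$ realizing $\dcut(A_n,\ka)\to 0$, shows that any such sequence cannot significantly ``split'' vertices between $\sss_1$ and $\sss_2$ (via a contradiction using irreducibility of $\ka_{\sss_1}$), and hence extracts complementary vertex sets $V_{n,1}, V_{n,2}$ with the desired sub-matrix convergence. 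Without an argument of this kind your ``restrict $A_n$ to vertices whose intervals fall essentially inside the preimage of $S_0$'' has no teeth, because $\dcut$-proximity of the full matrices does not a priori control what happens on sub-blocks. Your truncation approach for the unbounded case runs into the same wall (as you note, $\dcut(A_n,c\ka)\to 0$ does not yield $\dcut(A_n\wedge cM, c\ka^M)\to 0$), and is in fact unnecessary: once the splitting lemma is in hand, the paper handles bounded and unbounded $\ka$ uniformly by decomposing into irreducible pieces $\ka_i$, using $\norm{T_\ka}=\sup_i\norm{T_{\ka_i}}$ to pick a supercritical piece, and applying \refT{th1} to the corresponding sub-matrices. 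The explicit quantitative bound in the bounded case is then obtained not by your eigenfunction computation but by citing $\rho(\ka)\ge(\norm{T_\ka}-1)/\sup\ka$ from \cite{kernels} applied to the piece $\ka_j$ where the supremum in $\norm{T_\ka}=\sup_i\norm{T_{\ka_i}}$ is attained (attainment following from $\norm{T_{\ka_i}}\le M\mu(\sss_i)$ when $\ka\le M$). Your plan to derive this by testing $\rho=1-\exp(-T\rho)$ against the top eigenfunction $\psi$ is essentially where that cited bound comes from, so that part is sound in spirit; it is the missing splitting lemma that leaves a real gap.
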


This clearly generalizes the main result, Theorem 1,
of Bollob\'as, Borgs, Chayes and Riordan~\cite{QRperc}, which is simply the
special case in which $\ka$ and the entries of the matrices $A_n$ are uniformly bounded.
As we shall see in the next subsection, \refT{th2} also generalizes Theorem 3.1 of~\cite{kernels}.
Note, however, that to prove this requires various results from \cite{kernels}.

Returning to the irreducible case, we shall also prove a `stability'
result analogous to Theorem 3.9 of~\cite{kernels}.

\begin{theorem}\label{th_stab}
Let $\ka$ be an irreducible kernel and $(A_n)$
a sequence of non-negative symmetric $n$-by-$n$ matrices such that
$\dcut(A_n,\ka)\to 0$.
For every $\eps>0$ there is a $\delta=\delta(\ka,\eps)>0$ such that, whp,
\[
 \rho(\ka)-\eps \le C_1(G_n')/n \le \rho(\ka)+\eps
\]
for every graph $G_n'$ that may be obtained from $G_n=G(A_n)$ by deleting
at most $\delta n$ vertices and their incident edges, and then adding
or deleting at most $\delta n$ edges.
\end{theorem}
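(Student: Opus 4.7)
My plan is to prove the bounds $C_1(G_n')/n\le\rho(\ka)+\eps$ and $C_1(G_n')/n\ge\rho(\ka)-\eps$ separately, each whp. The upper bound is a combinatorial counting argument based on the component-size distribution of $G_n$; the lower bound uses a sprinkling decomposition.

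For the upper bound, I would exploit that (by a refinement of, or intermediate step in, the proof of \refT{th1}) for every fixed $k$ the quantity $N_{\ge k}(G_n)/n$ converges in probability to $P(|\bpk|\ge k)$, where $N_{\ge k}(G)$ denotes the number of vertices of $G$ in components of size at least $k$. Since $P(|\bpk|\ge k)\downto\rho(\ka)$ as $k\to\infty$, I would fix $k_0=k_0(\ka,\eps)$ with $P(|\bpk|\ge k_0)\le\rho(\ka)+\eps/3$, so that whp $N_{\ge k_0}(G_n)\le(\rho(\ka)+\eps/2)n$. The key observation is that $N_{\ge k_0}$ is non-increasing under vertex and edge deletion, while adding a single edge increases it by at most $2k_0$ (the maximum size of a component newly entering the ``$\ge k_0$'' tally, formed when two components of size $<k_0$ merge). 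Choosing $\delta$ so that $2k_0\delta\le\eps/2$ then gives $C_1(G_n')\le N_{\ge k_0}(G_n')\le(\rho(\ka)+\eps)n$ whp.

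For the lower bound I would sprinkle. By \eqref{convert}--\eqref{cve} one may work with $\Gp(A_n)$, and the identity $\Gp(A_n)\overset{\mathrm d}{=}\Gp((1-\eta)A_n)\cup\Gp(\eta A_n)$ for independent Poisson graphs gives a decomposition $G_n=H_n\cup H_n^s$ with kernels converging in $\dcut$ to $(1-\eta)\ka$ and $\eta\ka$ respectively. I would fix $\eta=\eta(\ka,\eps)>0$ small enough that $\rho((1-\eta)\ka)\ge\rho(\ka)-\eps/3$, which is possible by continuity of $\rho$ along scalar multiples of an irreducible kernel (a standard fact from \cite{kernels}). Applying \refT{th2} to the irreducible kernel $(1-\eta)\ka$, the giant $C$ of $H_n$ satisfies $|C|/n\pto\rho((1-\eta)\ka)$ and $C_2(H_n)=\op(n)$, so whp $|C\setminus S|\ge(\rho(\ka)-\eps/2)n$. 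Since $G_n'\supseteq(H_n\cup H_n^s)-S-E_-$, it suffices to show that the right-hand graph contains a single component covering all but $\op(n)$ vertices of $C\setminus S$.

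The last step is the sprinkling-connectivity argument. Removing $S$ and $E_-\cap H_n$ from $H_n$ partitions $C\setminus S$ into components $D_1,D_2,\ldots$; the independent sprinkling $H_n^s$ contributes about $\tfrac12\eta\on{\ka}n$ edges in expectation, of which at most $\delta n$ can lie in $E_-$. Because $\eta\ka$ is irreducible, the Poisson intensity of sprinkled edges between any two subsets of $V\setminus S$ of linear size is bounded below by a positive constant, so provided $\delta\ll\eta$ a standard supercritical-sprinkling argument in the spirit of those in \cite{kernels} amalgamates all but $\op(n)$ vertices of $C\setminus S$ into one component. I expect the main obstacle to be making this sprinkling step uniform over the adversarial choice of $(S,E_-)$: there are at most $\exp(O(\delta n\log(1/\delta)))$ such choices, so the sprinkling-connectivity assertion must hold for each candidate partition with exponentially small failure probability. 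This should follow from large-deviation estimates for the number of crossing Poisson edges (which are linear in $n$) combined with a union bound, taking $\delta$ sufficiently small compared to $\eta$.
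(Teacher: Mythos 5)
The upper-bound half of your argument matches the paper's: fix $k_0$ with the tail probability below $\rho(\ka)+\eps/3$, invoke the convergence of $N_{\le k}/n$ (or $N_{\ge k}/n$) in probability, and use that adding $\delta n$ edges moves at most $2k_0\delta n$ vertices into large components. No issues there.

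The lower-bound half has a genuine gap, and it is precisely the ingredient the paper's proof hinges on. Your sprinkling decomposition $G_n = H_n\cup H_n^s$ and the appeal to Lemma~\ref{l_join} are the right tools, but the union bound you describe does not close. Once $(S,E_-)$ is fixed, showing that $H_n^s-E_-$ amalgamates all but $\op(n)$ vertices of $C\setminus S$ requires ruling out \emph{every} linear-size bipartition of the pieces $D_1,\dots,D_m$ with no sprinkled crossing; each such bipartition fails with probability at most $e^{-cn}$ by Lemma~\ref{l_join}, but there are $2^m$ bipartitions. In the adversarial setting you cannot take $m=o(n)$ for granted: it is the non-adversarial sprinkling step (as in \cite{kernels}) that gets away with $m\le n/\omega(n)=o(n)$ because each piece there is a component of size $\ge\omega(n)$, whereas here the adversary may deliberately shatter $C$ into small pieces. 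Your phrase ``a standard supercritical-sprinkling argument in the spirit of those in \cite{kernels}'' therefore does not apply: those arguments face no adversary and implicitly rely on a piece count that is automatically small.

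The paper resolves this with a combinatorial lemma of Luczak and McDiarmid (Lemma~\ref{LLMcD}): for a connected $n$-vertex graph, the number of bipartitions with at most $\eta_0 n$ cross-edges is at most $(1+\eps)^n$, with $\eps$ as small as you like by shrinking $\eta_0$. The union bound is then taken directly over bipartitions of the giant of $H_n$ with few $H_n$-cross-edges (those are the only ones the adversary can separate with its edge budget), and $(1+c/2)^n\cdot e^{-cn}=o(1)$. Vertex deletion is handled separately and reduced to edge deletion by Lemma~\ref{onon} (small vertex sets meet few edges whp). Your framework of unioning over $(S,E_-)$ could perhaps be pushed through if you additionally prove that, whp, removing $\le\delta n$ vertices and $\le\delta n$ edges from the connected $C$ creates at most $O(\delta n)$ pieces (which also needs the Lemma~\ref{onon}-type degree control for the vertex deletions), so that the inner union bound is $2^{O(\delta n)}$; but you neither identify this piece-counting step nor the Luczak--McDiarmid alternative, and without one of them the argument does not go through.
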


As we shall show in \refSS{ss_stab}, using this result it is not hard
to deduce exponential tail bounds on the size of the giant component.

\begin{theorem}\label{th_cb}
Let $\ka$ be an irreducible kernel and $\eps>0$ a real number.
There is a $\gamma=\gamma(\ka,\eps)>0$ such that whenever
$(A_n)$ is sequence of non-negative symmetric $n$-by-$n$ matrices
with $\dcut(A_n,\ka)\to 0$, then setting $G_n=G(A_n)$ we have
\[
 \Pr\bb{ |C_1(G_n) -\rho(\ka)n | \ge \eps n} \le e^{-\gamma n}
\]
and
\[
 \Pr\bb{ C_2(G_n) \ge \eps n } \le e^{-\gamma n}
\]
for all large enough $n$.
\end{theorem}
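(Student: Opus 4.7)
The plan is to combine the stability provided by \refT{th_stab} with a classical concentration inequality for certifiable Lipschitz functions of independent Bernoulli variables, in the spirit of Talagrand. The random graph $G_n=G(A_n)$ is determined by the independent edge-indicators $X_{ij}\sim\Be(\min\set{a_{ij}/n,1})$. Since $C_1$ is $1$-Lipschitz in Hamming distance, and the event $\set{C_1\ge r}$ is certified by any spanning tree of a component of size at least $r$ (hence by at most $r-1$ coordinates), Talagrand's inequality for $r$-certifiable $1$-Lipschitz functions yields a concentration bound of the form
\[
 \Pr\bigl(|C_1(G_n)-M_n|\ge t\bigr)\le 4\exp\bigl(-t^2/(4(M_n+t))\bigr),
\]
where $M_n$ is a median of $C_1(G_n)$. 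Since $M_n\le n$, the choice $t=\eps n/2$ produces an exponentially small bound $e^{-\gamma_1 n}$ for some $\gamma_1=\gamma_1(\eps)>0$.

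To pin down $M_n$ I would invoke \refT{th2} (with $c=1$), which gives $C_1(G_n)/n\pto\rho(\ka)$; any median must then satisfy $M_n/n\to\rho(\ka)$, so $|M_n-\rho(\ka)n|\le\eps n/2$ for all sufficiently large $n$, and combined with the previous display this gives the first bound of the theorem. For $C_2$, I would argue by a one-edge perturbation exploiting \refT{th_stab}. Apply \refT{th_stab} with $\eps/3$ in place of $\eps$ to obtain the corresponding $\delta>0$, and let $\cF$ be the joint event that the conclusion of \refT{th_stab} holds for $G_n$ and that $C_1(G_n)\ge(\rho(\ka)-\eps/3)n$. On $\cF\cap\set{C_2(G_n)\ge\eps n}$, form $G_n^+$ from $G_n$ by adding a single edge joining a vertex of the largest component to a vertex of the second-largest; this is a one-edge modification, well within the $\delta n$ budget. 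Then
\[
 C_1(G_n^+)\ge C_1(G_n)+C_2(G_n)\ge\bigl(\rho(\ka)+2\eps/3\bigr)n,
\]
contradicting the bound $C_1(G_n^+)\le(\rho(\ka)+\eps/3)n$ supplied by \refT{th_stab}. Hence $\set{C_2(G_n)\ge\eps n}\subseteq\cF^{\mathrm c}$.

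The chief obstacle is to upgrade the qualitative ``whp'' conclusion of \refT{th_stab} to the exponentially small failure probability $\Pr(\cF^{\mathrm c})\le e^{-\gamma n}$ needed here. I expect this to be handled either by inspecting the proof of \refT{th_stab} and reading off an exponential rate from its key ingredients, or by applying the certifiability argument of the first paragraph not to $C_1(G_n)$ alone but to the perturbed extrema $C_1^+(G_n)\=\max\set{C_1(G_n'):G_n'\text{ is an admissible $\delta n$-modification of }G_n}$ and its minimum analogue $C_1^-(G_n)$; both remain $1$-Lipschitz in the edge-indicators and retain $O(n)$-size certificates, so the same Talagrand argument applies, and combined with the median-identification step would force $|C_1^{\pm}-\rho(\ka)n|\le\eps n$ with probability $1-e^{-\gamma n}$, yielding the theorem.
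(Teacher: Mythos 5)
Your central concentration step is wrong: $C_1$ is \emph{not} $1$-Lipschitz in edge-Hamming distance. Adding a single edge between two components of sizes $a$ and $b$ increases $C_1$ by $\min(a,b)$, which can be $\Theta(n)$, so the hypothesis of the certifiable-Lipschitz form of Talagrand's inequality fails and the deviation bound you wrote down does not follow. The proposed repair via the perturbed extrema $C_1^\pm$ inherits the same defect: a single-edge change in $G$ can still shift $C_1^\pm(G)$ by a macroscopic amount (take two components of size roughly $(\rho(\ka)-\eps)n$ and join them by an edge, with $\delta$ too small for the modification budget to undo the merger). Your reduction of the $C_2$ bound to the event $\cF$ is sound but merely restates the problem, since the theorem \emph{is} precisely the claim that $\Pr(\cF^{\mathrm c})\le e^{-\gamma n}$.

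The paper sidesteps this by never applying concentration to $C_1$ itself; instead it uses two auxiliary statistics that genuinely are Lipschitz. For the lower tail it takes $D(G)$, the minimum number of vertices whose deletion forces $C_1\le(\rho(\ka)-\eps)n$: this is $1$-Lipschitz under vertex exposure (changing all edges at a vertex $v$ changes $D$ by at most $1$, since one may always add $v$ to the deleted set), so McDiarmid's bounded-differences inequality applies, and \refT{th_stab} supplies $\E D(G_n)\ge\delta n$, giving $\Pr\bb{C_1(G_n)\le(\rho(\ka)-\eps)n}=\Pr(D=0)\le e^{-2\delta^2 n}$. For the upper tail on $C_1+C_2$ it fixes $k$ with $\sum_{k'\le k}\rho_{k'}(\ka)\ge 1-\rho(\ka)-\eps/4$ and applies Talagrand's inequality to $N_{>k}(G_n)=n-N_{\le k}(G_n)$, which \emph{is} $(2k)$-edge-Lipschitz and $n$-certifiable via spanning trees of the large components; \refL{l_Nk} fixes the mean. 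Combining the two tails yields both displayed bounds. To rescue your framework you would need a Lipschitz surrogate for $C_1$ of exactly this kind: certifiability alone is not sufficient, and that is the gap the paper's two constructions are built to close.
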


For the very special case of $G(n,p)$, $p=c/n$, much stronger results
are known, establishing the correct dependence of
$\gamma$ on $\eps$ in the upper and lower bounds.
Indeed, such a `large deviation principle' 
for $C_1(G(n,c/n))$ was obtained by O'Connell~\cite{OC},
and Biskup, Chayes and Smith~\cite{BCS} proved a corresponding
result for the number of vertices in `large' components.
One might ask whether these results can be generalized to $G(A_n)$;
this is likely to be rather hard.
Indeed, it is not even clear whether they extend
to $G(A_n)$ with $A_n$ converging to a constant kernel $\ka$.

\begin{remark}\label{r_pto}
We have stated all our results for a deterministic sequence
$A_n$ with $\dcut(A_n,\ka)\to 0$. In applications, however, the matrices
$A_n$ are often random, and $G_n$ is defined by first
conditioning on $A_n$, and then taking the entries as giving
the conditional probabilities of the edges, which are conditionally
independent. The conclusions of Theorems~\ref{th1}--\ref{th_stab}
are all of the form that $G(A_n)$ has certain properties whp.
Having proved such a result assuming $\dcut(A_n,\ka)\to 0$,
the corresponding result with $A_n$ random and $\dcut(A_n,\ka)\pto 0$
follows immediately. One way of seeing this is to note
that a sequence $E_n$ of events holds whp if and only if
every subsequence has a subsubsequence holding whp.
If $\dcut(A_n,\ka)\pto 0$, then
given a subsequence (with deterministic indices) of
the random sequence $(A_n)$, one can find a subsubsequence
such that $\dcut(A_n,\ka)\to 0$ holds a.s., 
condition on the matrices
in this subsubsequence, and apply the result for the deterministic
case.
\end{remark}

\medskip
The rest of the paper is organized as follows. In the next few subsections
we discuss various applications and consequences of the results above.
In \refS{sec_proof} we prove Theorems~\ref{th1}--\ref{th_cb}: as the proofs
are somewhat lengthy we shall break this section into subsections.
Finally, in \refS{sec_hyp} we present extensions of our main results to the
{\em hyperkernels}
and corresponding random (hyper)graphs considered in~\cite{clustering}.

\subsection{Relationship to the sparse inhomogeneous model}\label{ss_rel}

In this subsection we shall prove a simple lemma which, together with 
\refT{th2}, implies Theorem 3.1 of~\cite{kernels}. This latter
result  states that (essentially) the conclusions of 
Theorems~\ref{th1} and \ref{th2} (with $c=1$)
hold when the random graph $G_n$ is an instance of
the general sparse inhomogeneous model $G^\vxs(n,\ka_n)$ of~\cite{kernels}.
Since the full definitions 
of~\cite{kernels} are rather cumbersome, for this subsection only we assume
a certain familiarity with the terminology of~\cite{kernels}.

We say that a kernel $\ka$ on $(\sss,\mu)$ is {\em of finite type}
if there is a finite partition $(S_1,\ldots,S_r)$ of $\sss$ into
measurable sets such that $\ka$ is constant on each of the sets $S_i\times S_j$.
A key strategy we used in~\cite{kernels} was to reduce results about the general
case to the finite-type case; we shall use
the same approach in this subsection.
In the rest of this paper we follow a different strategy, using
cut convergence to directly prove results about the general case.

The sparse inhomogeneous model $G^\vxs(n,\ka_n)$
is defined in terms of a {\em ground space} $\vxs=(\sss,\mu,\xss)$,
and a sequence $(\ka_n)$ of kernels on $(\sss,\mu)$.
Here $(\sss,\mu)$
is a probability space (satisfying some additional assumptions)
and each $\xs_n$ is a (deterministic or) random sequence of $n$ points
of $\sss$, satisfying certain technical assumptions.
The sequence $(\ka_n)$ is assumed to converge to a kernel $\ka$
in a certain sense, and must also satisfy
a certain `graphicality' assumption that involves the sequences $\xs_n$.
For the full technical details, which will not be relevant here, see~\cite{kernels}.

As noted in~\cite[Remark 8.8]{kernels},
in proving results about this model
one may always assume that the vertex types are deterministic. In this
case $G^\vxs(n,\ka_n)$ has the distribution of
$G(A_n)$, where $A_n$ is the matrix obtained
by sampling the kernel according to the vertex types: $A_n$ has
entries $a_{ij}=a_{ij}^{(n)}$ given by
$a_{ij}=\ka_n(x_i^{(n)},x_j^{(n)})\wedge n$ for $i\ne j$
and $a_{ii}=0$, where $x\wedge y=\min\{x,y\}$.
We refer the reader to \cite{kernels} for the formal definition of $G^\vxs(n,\ka_n)$,
and in particular for the precise definitions of a (generalized) vertex space
and a graphical (sequence of) kernel(s).

The next lemma shows that the matrices $A_n$ associated to $G^\vxs(n,\ka_n)$
do converge in probability to the limit kernel $\ka$ in the cut metric.
Although our main interest is in the cut distance, we in fact obtain a result
for the $L^1$ norm, modulo rearrangements. Given two 
kernels $\ka$, $\ka'$ on the standard ground space,
let
\begin{equation}\label{d1def}
 \delta_1(\ka,\ka')=\inf_{\ka''\sim \ka'} \on{\ka-\ka''},
\end{equation}
in analogy with \eqref{cutdef}. More generally, for two kernels
on arbitrary (not necessarily equal) probability spaces, we
may define $\delta_1(\ka,\ka')$ as a certain infimum over couplings
of these probability spaces; we omit the details.

\begin{lemma}\label{l_k}
Let $\vxs=(\sss,\mu,\xss)$ be a vertex space,
and let $(\ka_n)$ be a sequence of kernels that
is graphical on $\vxs$ with limit $\ka$. 
Let $A_n$ be the matrix with entries
$a_{ij}=\ka_n(x_i^{(n)},x_j^{(n)})\wedge n$
for $i\ne j$ and $a_{ii}=0$.
Then $\delta_1(\ka_{A_n},\ka)\pto 0$ 
and $\dcut(A_n,\ka)=\dcut(\ka_{A_n},\ka)\pto 0$.
\end{lemma}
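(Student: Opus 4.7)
The plan is to approximate $\ka$ by finite-type kernels, handle that case via the vertex-space law of large numbers, and close up the remaining error using the graphicality hypothesis. Since $\cn{W}\le \on{W}$ for every $W\in L^1(\sss^2)$, we have $\dcut\le\delta_1$, so it suffices to prove $\delta_1(\ka_{A_n},\ka)\pto 0$; the identity $\dcut(A_n,\ka)=\dcut(\ka_{A_n},\ka)$ was already noted before the statement of the lemma. By \cite[Remark~8.8]{kernels} I may assume the vertex types $x_i^{(n)}$ are deterministic, and the random-type case then follows via the subsequence argument of \refR{r_pto}.

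First I treat the finite-type case. Suppose $\widetilde\ka$ is a bounded kernel equal to the constant $c_{uv}$ on $S_u\times S_v$ for some finite measurable partition $(S_1,\ldots,S_r)$ of $\sss$, and let $\widetilde A_n$ be the matrix with off-diagonal entries $\widetilde\ka(x_i^{(n)},x_j^{(n)})$. Writing $n_u=|\{i:x_i^{(n)}\in S_u\}|$, the vertex-space axiom yields $n_u/n\to\mu(S_u)$. After the (measure-preserving) relabeling of indices that groups vertices in the same $S_u$ consecutively, $\ka_{\widetilde A_n}$ is the step kernel equal to $c_{uv}$ on a rectangle of side-lengths $n_u/n$ and $n_v/n$; a measure-preserving bijection of $\sss$ mapping each $S_u$ piecewise-affinely onto the corresponding interval of $[0,1]$ produces a rearrangement of $\widetilde\ka$ agreeing with $\ka_{\widetilde A_n}$ off a set of measure $O\bigl(\sum_u|n_u/n-\mu(S_u)|\bigr)$, so $\delta_1(\widetilde A_n,\widetilde\ka)\le \sn{\widetilde\ka}\sum_u|n_u/n-\mu(S_u)|\to 0$.

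For the general case, given $\eps>0$ I pick a bounded finite-type kernel $\widetilde\ka$ with $\on{\ka-\widetilde\ka}<\eps$ (such approximations are dense in $L^1(\sss^2)$) and apply the triangle inequality
\begin{equation*}
\delta_1(\ka_{A_n},\ka)\le \on{\ka_{A_n}-\ka_{\widetilde A_n}}+\delta_1(\ka_{\widetilde A_n},\widetilde\ka)+\delta_1(\widetilde\ka,\ka).
\end{equation*}
The middle term vanishes by the previous paragraph, the last is at most $\eps$, and the first equals
$\frac1{n^2}\sum_{i\ne j}\bigl|\ka_n(x_i^{(n)},x_j^{(n)})\wedge n-\widetilde\ka(x_i^{(n)},x_j^{(n)})\bigr|$.
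The graphicality of $(\ka_n)$ with limit $\ka$ will let me conclude that this empirical $L^1$ difference converges in probability to $\on{\ka-\widetilde\ka}<\eps$; letting $\eps\downarrow 0$ then completes the proof.

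The main obstacle is extracting this last $L^1$-type convergence of sampled kernel values from the graphicality condition of \cite{kernels}. Graphicality guarantees that $\frac1{n^2}\sum_{i,j}\ka_n(x_i^{(n)},x_j^{(n)})\wedge n\to\iint\ka$ and, as a consequence, that the tail contribution $\frac1{n^2}\sum_{i,j}\ka_n(x_i^{(n)},x_j^{(n)})\ett{\ka_n(x_i^{(n)},x_j^{(n)})>n}$ is negligible, so the truncation at $n$ is harmless. Combined with the matching of $\ka_n$ to $\ka$ on the ground space built into the graphicality hypothesis, a Scheff\'e-type argument applied to the empirical measure on $\sss^2$ induced by the pairs $(x_i^{(n)},x_j^{(n)})$, with the bounded, finite-type test function $\widetilde\ka$, delivers the desired convergence.
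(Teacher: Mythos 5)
Your initial reductions (deterministic types via \cite[Remark 8.8]{kernels}, the inequality $\dcut\le\delta_1$, and the finite-type case) match the paper's argument and are fine. The genuine gap is in the final step: you reduce to showing
\[
\frac{1}{n^2}\sum_{i\ne j}\bigl|\ka_n(x_i^{(n)},x_j^{(n)})\wedge n-\widetilde\ka(x_i^{(n)},x_j^{(n)})\bigr|\;\pto\;\on{\ka-\widetilde\ka},
\]
and assert that graphicality plus ``a Scheff\'e-type argument'' on the empirical pair measure deliver this. But graphicality (via Lemma~8.7 of \cite{kernels}) controls only the empirical \emph{total mass} $\frac{1}{n^2}\sum\ka_n(x_i,x_j)\wedge n\to\int\ka$, i.e., the expected edge count. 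Converting this into control of an empirical $L^1$ distance against an arbitrary $L^1$-close $\widetilde\ka$ is exactly where the work is: the vertex-space axioms give only weak convergence of the empirical type distribution, the sampled functions $\ka_n$ vary with $n$, and the absolute value destroys the one identity (expected edge count) that graphicality actually supplies. You flag this yourself as ``the main obstacle'' but do not resolve it.

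The paper closes the gap by making the finite-type approximant \emph{one-sided}. By Lemma~7.3 of \cite{kernels} there are regular finitary kernels $\ka_m^-$ with $\ka_m^-\le\ka_n$ for all $n\ge m$ and $\ka_m^-\upto\ka$ a.e.\ as $m\to\infty$. Choosing $m$ with $\int(\ka-\ka_m^-)\le\eps$ and sampling the corresponding matrix $A_n^-$, the monotonicity gives $\ka_{A_n^-}\le\ka_{A_n}$ pointwise, so the absolute value disappears: $\on{\ka_{A_n}-\ka_{A_n^-}}=\int\ka_{A_n}-\int\ka_{A_n^-}$. Each integral is (twice) an expected edge density, which Lemma~8.7 of \cite{kernels} handles directly, giving $\int\ka_{A_n}\to\int\ka$ and $\int\ka_{A_n^-}\to\int\ka_m^-$; hence $\on{\ka_{A_n}-\ka_{A_n^-}}\le\eps+o(1)$, and the finite-type case applied to $A_n^-$ finishes the argument. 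A symmetric $L^1$-close approximant $\widetilde\ka$ throws away the sign information that makes the edge-count identity usable; replacing it with the monotone approximation from Lemma~7.3 of \cite{kernels} is the missing key step.
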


\begin{proof}
Since $\cn{\ka'}\le \on{\ka'}$ for any $\ka'$, we have
$\dcut(\ka_1,\ka_2)\le \delta_1(\ka_1,\ka_2)$ for any two kernels,
so it suffices to prove the first statement.

Conditioning on the vertex types, we may and shall assume
that the vertex types are deterministic.
For convenience we assume that $\sss$ is the standard ground space
$\oi$. (The general case requires couplings of $\ka$ and $A_n$, but is
otherwise the same.)

Suppose first that $\ka$ is regular finitary; roughly speaking,
this means that $\ka$ is of finite type. (More precisely,
$\ka$ must be of finite type and must satisfy an additional technical condition;
see~\cite{kernels}.)
Suppose also that $\ka_n=\ka$ for every $n$.
In this case the result is essentially trivial:
we may assume that
there is a partition of $\sss$ into sets $S_1,\ldots,S_k$ such that
$\ka$ is constant on each 
set $S_r\times S_s$. The definition of a vertex space ensures that for each $r$
there are $\mu(S_r)n+o(n)$ vertices $i$ such that $x_i\in S_r$. 
Rearranging (or coupling) appropriately, we may assume that each $S_r$
is an interval $I_r\subseteq\sss=\oi$.
We may then
order the vertices so that for all but $o(n)$ vertices $i$ the interval
$(i-1/n,i/n]$ lies entirely inside the interval $I_r$ containing $x_i$.
After doing so, $\ka$ and $\ka_{A_n}$ differ on a set of measure
$o(1)$. 
Since
both are bounded by $\sup\ka<\infty$, it follows that $\ka_{A_n}\to \ka$ in $L^1$
and hence in $\dcut$.

To treat the general case, we approximate by finite-type kernels, as so often in~\cite{kernels}.
Indeed, by Lemma 7.3 of~\cite{kernels} there is a sequence of regular finitary kernels
$\ka_m^-$ such that
$\ka_m^-\le \ka_n$ for all $n\ge m$ and $\ka_m^-(x,y)\upto \ka(x,y)$
for a.e.\ $(x,y)\in \sss^2$.
By monotone convergence, we have $\int \ka_m^-\to \int \ka$ as $m\to \infty$.
Fix $\eps>0$. Then there is some $m$ such that $\ka^-=\ka_m^-$ satisfies
$\ka^-\le \ka$ and $\int (\ka-\ka^-)\le \eps$.

Let $A_n^-$ be the matrix with entries
$a_{ij}^-=\ka^-(x_i^{(n)},x_j^{(n)})\wedge n$, $i\ne j$,
and $a_{ii}^-=0$.
Considering from now on only $n\ge m$, we then have
$a_{ij}^-\le a_{ij}$ and thus
$\ka_{A_n^-}\le \ka_{A_n}$ pointwise.
After conditioning on the vertex types, the expected number of edges in $G^\vxs(n,\ka_n)$
is exactly
\[
 \frac{1}{2} \sum_i \sum_{j\ne i} \frac{a_{ij}}{n} = 
  \frac{1}{2} \sum_i \sum_j \frac{a_{ij}}{n} = \frac{n}{2} \int \ka_{A_n},
\]
using $a_{ii}=0$ for the first equality.
Thus, by Lemma 8.7 of~\cite{kernels}, $\int \ka_{A_n}\to \int\ka$.
Similarly (since a finite-type kernel is always graphical),
$\int \ka_{A_n^-}\to \int\ka^-$.
Hence,
\[
 \on{\ka_{A_n}-\ka_{A_n^-}} = \int (\ka_{A_n}-\ka_{A_n^-})
 \to \int (\ka-\ka^-) \le \eps.
\]
By the finite-type case above, we have $\delta_1(\ka_{A_n^-},\ka^-)\to 0$.
Since $\on{\ka-\ka^-}\le \eps$ it follows
that $\limsup \delta_1(\ka_{A_n},\ka)\le 2\eps$.
Recalling that $\eps>0$ was arbitrary, the result follows.
\end{proof}

Recall that Theorem 3.1 of~\cite{kernels} states (essentially)
that the random graphs 
$G_n=G^\vxs(n,\ka_n)$ satisfy the conclusions of
Theorems~\ref{th1} and \ref{th2}. 
Using \refL{l_k}, by Remark~\ref{r_pto}
the vertex space case of this result follows
immediately from Theorems \ref{th1} and \ref{th2}. 
As noted in \cite[Section 8.1]{kernels},
the apparent extra generality of generalized vertex spaces makes no essential
difference, so Theorem 3.1 of~\cite{kernels} then follows.
In other words, we have shown that Theorem 3.1 of~\cite{kernels} may
be deduced from our present Theorems \ref{th1} and \ref{th2},
using various results from~\cite{kernels} mentioned above.
Let us remark that in practice,
the conditions of Theorem 3.1 of~\cite{kernels} will often
be easier to verify than those of Theorems \ref{th1} and \ref{th2}.

\subsection{Further applications}

As noted in~\cite{clustering}, the definitions in~\cite{kernels} exclude
one simple case to which the results clearly extend, namely the case
of an arbitrary integrable kernel $\ka$, and \iid\ vertex types: 
given a kernel $\ka$, one may define the random graph $G(n,\ka)=G_{1/n}(n,\ka)$
on $[n]$ by taking $x_1,\ldots,x_n$ to be independent and uniformly distributed
on $[0,1]$, and given these `vertex types', joining each pair $\{i,j\}$ of vertices
with probability $\min\{\ka(x_i,x_j)/n,1\}$, independently of all other pairs.
With $\ka$ bounded, a corresponding dense random graph was studied by
Lov\'asz and Szegedy~\cite{LSz1}.

Our next lemma shows that Theorems \ref{th1}--\ref{th_stab}
apply (unsurprisingly) to the
graphs $G(n,\ka)$, since the (random) matrices of edge probabilities associated to $G(n,\ka)$
converge to $\ka$ in probability in $\dcut$.  

\begin{lemma}\label{l_iid}
Let $\ka$ be a kernel. For $n\ge 1$ let $x_1,\ldots,x_n$ be \iid\ uniform points
from $\sss$, and let $A_n$ be the $n$-by-$n$ matrix with entries $a_{ij}=\ka(x_i,x_j)$
for $i\ne j$, and $a_{ii}=0$.
Then $\delta_1(A_n,\ka)\pto 0$ and $\dcut(A_n,\ka)\pto 0$.
\end{lemma}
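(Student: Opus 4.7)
The plan is to show $\delta_1(A_n,\ka)\pto 0$; since $\cn{\cdot}\le\on{\cdot}$ gives $\dcut\le\delta_1$, the $\dcut$ statement follows. Without loss of generality I take $\sss=\oi$ with $\mu$ Lebesgue measure. The argument mirrors the proof of \refL{l_k}: I approximate $\ka$ first by its truncation $\kaM=\ka\wedge M$ for large $M$, and then $\kaM$ by a bounded finite-type kernel $\kam$, combining the pieces via the triangle inequality
\begin{equation*}
\delta_1(A_n,\ka)\le \onb{A_n-A_n^M} + \onb{A_n^M-A_n^-} + \delta_1(A_n^-,\kam) + \onb{\kam-\kaM} + \onb{\kaM-\ka},
\end{equation*}
where $A_n^M$ and $A_n^-$ are the matrices sampled from $\kaM$ and $\kam$ at the same points $x_1,\ldots,x_n$ used to form $A_n$.

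For the finite-type term, suppose $\kam$ is constant on each rectangle $I_r\times I_s$ for a partition of $\oi$ into intervals $I_1,\ldots,I_k$. By the strong law of large numbers, the number of $i\le n$ with $x_i\in I_r$ is $\mu(I_r)n+o(n)$ a.s. I would reorder the vertices so that those with $x_i\in I_1$ come first, then those with $x_i\in I_2$, and so on. After this reordering, the interval $((i-1)/n,i/n]$ lies inside the $I_r$ containing $x_i$ for all but $o(n)$ indices $i$; on the corresponding product squares, the rearranged $\ka_{A_n^-}$ equals $\kam$ pointwise, and on the exceptional set (of area $o(1)$) both kernels are bounded by $\|\kam\|_\infty$. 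Hence $\delta_1(A_n^-,\kam)\to 0$ almost surely, and in particular in probability.

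For the two random sampling terms, each has the form $X_n=\frac{1}{n^2}\sum_{i\ne j}f(x_i,x_j)$ for a non-negative $f\in L^1(\sss^2)$, with $\E[X_n]=\frac{n-1}{n}\onb{f}\le\onb{f}$, so Markov's inequality gives $\Pr(X_n\ge t)\le\onb{f}/t$. Given $\eps>0$, I would choose $M$ so that $\onb{\ka-\kaM}\le\eps^2$ (possible by monotone convergence) and then a bounded finite-type $\kam$ with $\onb{\kaM-\kam}\le\eps^2$ (possible because step kernels are dense in $L^1$). Markov then bounds each of $\onb{A_n-A_n^M}$ and $\onb{A_n^M-A_n^-}$ by $\eps$ except with probability $\eps$, while the two deterministic tail terms $\onb{\kam-\kaM}$ and $\onb{\kaM-\ka}$ are at most $\eps^2$. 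Combining, $\delta_1(A_n,\ka)\le 2\eps+2\eps^2+o(1)$ with probability at least $1-2\eps$, and since $\eps$ was arbitrary, $\delta_1(A_n,\ka)\pto 0$.

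The only subtlety is that full concentration for $\frac{1}{n^2}\sum_{i\ne j}f(x_i,x_j)$ is not available when $f$ is unbounded, but a bare Markov bound is already enough, because the mean $\onb{f}$ itself can be pre-shrunk at will by pushing the truncation level $M$ and the step-kernel approximation fine enough before the Markov step is applied.
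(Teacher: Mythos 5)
Your proof is correct and follows essentially the same route as the paper: approximate $\ka$ in $L^1$ by a finite-type kernel, control the random sampling error via Markov's inequality (using that the expected $L^1$ distance between the sampled kernels equals $\tfrac{n-1}{n}$ times the $L^1$ distance of the originals), and handle the finite-type case via a rearrangement-plus-SLLN argument, combining with the triangle inequality for $\delta_1$. The only difference is the intermediate truncation step $\ka\to\kaM$, which is redundant: a finite-type approximation of $\ka$ in $L^1$ is automatically bounded (it takes finitely many values), so the paper goes directly from $\ka$ to a finite-type $\ka'$.
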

\begin{proof}
As before, we have $\dcut\le\delta_1$, so it suffices to prove the first statement.
Fix $\eps>0$. By standard results there is a finite-type kernel $\ka'$
such that $\on{\ka-\ka'}\le \eps^2$. 
Indeed, this follows by the construction of the product measure, since the
rectangular sets $A\times B$ generate an algebra $\cF_0$ that
generates the product $\sigma$-field, and it is easily seen that finite
linear combinations of indicator functions of sets in $\cF_0$ are
dense in $L^1(\sss^2)$.

Let $A_n'$ be the matrix with entries $a_{ij}'=\ka'(x_i,x_j)$, $i\ne j$, and $a_{ii}'=0$.
Then
\[
 \E \on{\ka_{A_n}-\ka_{A_n'}} = \frac{n(n-1)}{n^2} \on{\ka-\ka'} \le \eps^2,
\]
so with probability at least $1-\eps$ we have
\begin{equation}\label{2eps}
 \delta_1({A_n},{A_n'}) =
 \delta_1(\ka_{A_n},\ka_{A_n'}) \le \on{\ka_{A_n}-\ka_{A_n'}}\le \eps.
\end{equation}
Since $\ka'$ is of finite type, it is essentially trivial that
$\delta_1(A_n',\ka')\pto 0$ as $n\to\infty$; the argument is similar to
one in the previous subsection, so we omit the details.
Since $\delta_1(\ka,\ka')\le \on{\ka-\ka'} \le \eps^2$,
\[
 \delta_1(A_n,\ka) \le \delta_1(A_n,A_n')+\delta_1(A_n',\ka')+\delta_1(\ka',\ka),
\]
and $\eps>0$ was arbitrary, it follows that $\delta_1(A_n,\ka)\pto 0$, as claimed.
\end{proof}

So far we have shown that the results in Subsection~\ref{ss_res}
imply many existing
results about the giant component in various sparse random graphs. We
now turn to a new application, giving an example that we believe
is not covered by known results.

Let $p=p(n)$ be some normalizing function, with $0<p\le 1$ and $p(n)\to 0$.
Let $G_n$ be a sequence of graphs in which $G_n$ has $n$ vertices
and $\Theta(pn^2)$ edges, and let $\ka$ be a kernel.
Following the terminology of~\cite{BRsparse,BRsparse2}, we say
that $\dcut(G_n,\ka)\to 0$ if $\dcut(A_n,\ka)\to 0$, where $A_n$
is $1/p$ times the adjacency matrix of $G_n$.
A sequence $(G_n)$ satisfying this condition may be thought of as a sequence
of inhomogeneous sparse quasi-random graphs. For graphs which are dense
and homogeneous,
there are many equivalent definitions of quasi-randomness,
or pseudo-randomness; see Thomason~\cite{Tho,Tho2} or
Chung, Graham and Wilson~\cite{CGW89}, for example.
In the sparse case these notions are no longer equivalent,
as discussed by Chung and Graham~\cite{CG} in the homogeneous case,
and Bollob\'as and Riordan~\cite{BRsparse} in general; when $\ka$
is constant, normalizing so that $\ka=1$,
we have $\dcut(G_n,\ka)\to 0$ if and only if
\begin{equation}
 \sup_{V\subset V(G_n)} \bigl| e(G_n[V]) - p|V|^2/2 \bigr| =o(pn^2);
\end{equation}
this condition is called DISC in~\cite{CG}.
Other, stronger conditions have also been considered,
in particular by Thomason~\cite{Tho,Tho2}.
Our next result establishes the threshold for percolation on an arbitrary
sequence of inhomogeneous sparse quasi-random graphs.

\begin{theorem}\label{th_qrs}
Let $c>0$ be a constant, let $p=p(n)$ be any function with $c/n \le p(n)\le 1$, let $\ka$
be an irreducible kernel on $[0,1]^2$, and let $(G_n)$ be a sequence of graphs
with $|G_n|=n$ and $\dcut(G_n,\ka)\to 0$.
Writing $G_n'$ for the random subgraph of $G_n$ obtained by selecting
each edge independently with probability $c/(pn)$,
we have $C_1(G_n')/n\pto \rho(c\ka)$. In particular, the threshold
value of $c$ above which a giant component appears in $G_n'$ is given by $1/\norm{T_\ka}$.
\end{theorem}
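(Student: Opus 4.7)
The plan is to observe that Theorem~\ref{th_qrs} is essentially a direct corollary of \refT{th2}(c), once one identifies the random graph $G_n'$ with $G(cA_n)$ for an appropriate normalized adjacency matrix $A_n$.

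First I would unwind the definitions. Let $B_n$ denote the adjacency matrix of $G_n$, so $A_n = B_n/p$ has entries in $\{0,1/p\}$. Recall that $G_n'$ is obtained from $G_n$ by retaining each edge with probability $c/(pn)$, and note that $c/(pn) \le 1$ by the hypothesis $p \ge c/n$. Hence, for every pair $\{i,j\}$, the probability that $ij$ is an edge of $G_n'$ equals
\[
 \frac{c}{pn}\ett{ij\in E(G_n)} = \frac{c\,(A_n)_{ij}}{n} = \min\Bigset{\frac{c\,(A_n)_{ij}}{n},\,1},
\]
with independence across pairs. Thus $G_n' \stackrel{d}{=} G(cA_n)$ in the notation of the paper.

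Next, by the definition of cut convergence for graphs recalled just before the theorem, the hypothesis $\dcut(G_n,\ka)\to 0$ is literally the statement that $\dcut(A_n,\ka)\to 0$. Since $\ka$ is irreducible, \refT{th2}(c) applied to the sequence $(A_n)$ with parameter $c$ yields $C_1(G(cA_n))/n \pto \rho(c\ka)$, which is precisely the main conclusion of \refT{th_qrs}.

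For the ``in particular'' statement about the threshold, I would combine two observations: by linearity of the integral operator, $\norm{T_{c\ka}} = c\norm{T_\ka}$, and by the criterion from \cite[Theorem 6.2]{kernels} recalled above \refT{th2}, one has $\rho(c\ka)>0$ if and only if $\norm{T_{c\ka}}>1$. These two facts together give that $\rho(c\ka)>0$ exactly when $c>1/\norm{T_\ka}$, identifying the claimed threshold. (Alternatively, parts (a) and (b) of \refT{th2} give the same conclusion directly.) There is no real obstacle here: the entire content of the theorem is the observation that percolating a quasi-random host graph with the right density is exactly the $G(cA_n)$ construction, so the earlier main result applies verbatim.
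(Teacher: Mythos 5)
Your proposal is correct and takes essentially the same approach as the paper: both reduce the statement to the earlier main theorem by identifying $G_n'$ with $G(cA_n)$ for $A_n = B_n/p$ and invoking $\dcut(cA_n, c\ka)\to 0$. You add a bit of extra care (checking $c/(pn)\le 1$ so the truncation in the edge probability is inactive, and spelling out the threshold claim via $\norm{T_{c\ka}}=c\norm{T_\ka}$), but the substance matches the paper's proof.
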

\begin{proof}
As above, let $A_n$ be $1/p$ times the adjacency matrix of $G_n$.
Then, by assumption, $\dcut(A_n,\ka)\to 0$, so $\dcut(c A_n,c\ka)\to 0$.
The random subgraph $G_n'$ is exactly $G(c A_n)$, so the result
follows from \refT{th1}.
\end{proof}

As noted in~\cite{BRsparse}, one way to construct inhomogeneous
sparse quasi-random graphs is to consider appropriate {\em random} graphs, but this
is not so interesting in the present context: the random subgraphs of such graphs
end up being the graphs $G(n,\ka)$ considered at the start of the subsection.
A more interesting application of \refT{th_qrs} is to deterministic
quasi-random graphs. In the homogeneous case, where $\ka=1$ is constant,
many such sequences
are known. One example is given by the `polarity graphs'
of Erd\H os and R\'enyi~\cite{ERpolarity},
defined (for suitable $n$)
by taking as vertices the points of the projective
plane over $GF(q)$, $q$ a prime power, and joining $x=(x_0,x_1,x_2)$
and $y=(y_0,y_1,y_2)$ 
if and only if $x_0y_0+x_1y_1+x_2y_2=0$ in $GF(q)$. Here
$n=q^2+q+1$ and $p=(q+1)/n=\Theta(n^{-1/2})$. 
Other examples are the coset graphs of Chung~\cite{Chung_coset}
and the Ramanujan graphs of Lubotzky, Phillips and Sarnak~\cite{LPS}. 
In all these examples the limiting kernel is constant, so \refT{th_qrs}
says that on any of these graphs, the threshold for percolation
is when the average degree of the random subgraph is equal to $1$.

Note that in the examples above, the matrices $(A_n)$ to which \refT{th1}
or \refT{th2} is applied are very far from satisfying the uniform
boundedness condition assumed in
Bollob\'as, Borgs, Chayes and Riordan~\cite{QRperc}. Indeed,
each $A_n$ has all entries either $0$ or $1/p$, where $p=p(n)\to 0$.
This also implies that the corresponding kernels $\ka_{A_n}$,
which do converge to $\ka=1$ in the cut norm, do not converge
in various natural stronger senses, such as pointwise or in $L^1$.

In general, it is very hard to compute the cut distance between two
kernels. Indeed, if $A_1$ and $A_2$ are the adjacency matrices of
two graphs, then the general problem of computing $\dcut(\ka_{A_1},\ka_{A_2})$
includes as a special case
deciding whether $G_1$ and $G_2$ are isomorphic.
Thus applications of Theorems~\ref{th1} and~\ref{th2} are likely to involve special cases
where cut convergence is guaranteed for some simple reason,
such as the example in the previous subsection.

\subsection{Consequences for branching processes}

\refT{th1} has an interesting consequence purely concerning branching
processes. Recall that if $\ka$ is a kernel, then $\rho(\ka)$ denotes
the survival probability of the multi-type Poisson Galton--Watson process
$\bpk$.

\begin{theorem}\label{th_br}
Let $\ka_m$, $m\ge 1$, and $\ka$ be kernels with
$\dcut(\ka_m,\ka)\to 0$
as \mtoo.
Then $\rho(\ka_m)\to \rho(\ka)$.
\end{theorem}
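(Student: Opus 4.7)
The plan is to use \refT{th1} to realize both $\rho(\ka_m)$ and $\rho(\ka)$ as limits of a common random-graph quantity $C_1(G(B_m))/n_m$, forcing them to agree. For each $m$, \refL{l_iid} applied to $\ka_m$ says the random matrix $A_n^{(m)}$ obtained by sampling $\ka_m$ at $n$ iid uniform points of $[0,1]$ satisfies $\dcut(A_n^{(m)},\ka_m)\pto 0$ as $n\to\infty$. Assuming first (for use below) that each $\ka_m$ is irreducible, \refT{th1} moreover gives $C_1(G(A_n^{(m)}))/n\pto\rho(\ka_m)$ as $n\to\infty$, for each $m$. A diagonal argument then yields $n_m\to\infty$ for which $B_m\=A_{n_m}^{(m)}$ satisfies simultaneously $\dcut(B_m,\ka_m)\pto 0$ and $C_1(G(B_m))/n_m-\rho(\ka_m)\pto 0$ as $m\to\infty$; the triangle inequality combined with the hypothesis $\dcut(\ka_m,\ka)\to 0$ then gives $\dcut(B_m,\ka)\pto 0$.

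Assume further that $\ka$ is irreducible. Applied to the single sequence $(B_m)_m$ with limit $\ka$ (using \refR{r_pto} for the randomness of the $B_m$), \refT{th1} yields $C_1(G(B_m))/n_m\pto\rho(\ka)$. Combined with $C_1(G(B_m))/n_m-\rho(\ka_m)\pto 0$ from above, this gives $\rho(\ka_m)\pto\rho(\ka)$, and since the $\rho(\ka_m)$ are deterministic constants, $\rho(\ka_m)\to\rho(\ka)$.

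For arbitrary kernels, set $\ka^\epsilon\=\ka+\epsilon$ and $\ka_m^\epsilon\=\ka_m+\epsilon$; these are irreducible, being bounded below by $\epsilon>0$, and since adding a constant preserves the cut norm, $\dcut(\ka_m^\epsilon,\ka^\epsilon)=\dcut(\ka_m,\ka)\to 0$. The previous paragraph then yields $\rho(\ka_m^\epsilon)\to\rho(\ka^\epsilon)$ for every $\epsilon>0$. Together with the standard monotone convergence $\rho(\ka^\epsilon)\downarrow\rho(\ka)$ as $\epsilon\downarrow 0$ and the pointwise-kernel monotonicity $\rho(\ka_m)\le\rho(\ka_m^\epsilon)$, this gives $\limsup_m\rho(\ka_m)\le\rho(\ka)$. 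The matching lower bound $\liminf_m\rho(\ka_m)\ge\rho(\ka)$ is the main obstacle: it requires a uniform-in-$m$ estimate $\rho(\ka_m^\epsilon)-\rho(\ka_m)=o_\epsilon(1)$ as $\epsilon\downarrow 0$, obtainable by coupling the two branching processes and bounding the contribution of the extra $\epsilon$-offspring via exponential-moment estimates uniform in the background kernel.
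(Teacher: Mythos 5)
Your Step 1 (the irreducible core, via \refL{l_iid}, a diagonal choice $n_m\to\infty$, and two applications of \refT{th1}) is essentially the paper's argument, and it is correct. The upper bound $\limsup_m\rho(\ka_m)\le\rho(\ka)$ via $\rho(\ka_m)\le\rho(\ka_m^\eps)\to\rho(\ka^\eps)\downarrow\rho(\ka)$ is also sound, relying on the downward monotone-continuity from \cite[Theorem 6.4(i)]{kernels}.

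The gap is the lower bound. You reduce it to the claim that $\rho(\ka_m^\eps)-\rho(\ka_m)=o_\eps(1)$ \emph{uniformly} in $m$, and then write that this is ``obtainable by coupling the two branching processes and bounding the contribution of the extra $\eps$-offspring via exponential-moment estimates uniform in the background kernel.'' That is a sketch, not a proof, and it is not at all clear it can be pushed through: the kernels $\ka_m$ may be unbounded and may sit arbitrarily close to criticality, and any coupling bound would have to be uniform over the whole (non-compact, in any strong topology) family $\{\ka_m\}$. The hypothesis $\dcut(\ka_m,\ka)\to 0$ must be used in an essential way to get such uniformity, but you do not say how. In effect, the uniform estimate you assert is comparable in difficulty to the lower bound itself, so the argument is circular unless it is actually carried out.

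The paper avoids this entirely. For the lower bound (Step 3 there), it first replaces rearrangements by genuine cut-norm convergence $\cn{\ka_m-\ka}\to 0$, then uses the decomposition $\ka=\sum_{i\ge1}\ka^{(i)}$ of \cite[Lemma 5.17]{kernels} into (quasi-)irreducible kernels supported on disjoint blocks $\sss_i\times\sss_i$. Restricting $\ka_m$ to the same blocks gives $\ka_m^{(i)}$ with $\cn{\ka_m^{(i)}-\ka^{(i)}}\le\cn{\ka_m-\ka}\to 0$ (cut norm only decreases under multiplication by a product of indicator functions), so the irreducible case applies block by block, and since $\ka_m\ge\sum_{i\le k}\ka_m^{(i)}$ one gets $\liminf_m\rho(\ka_m)\ge\sum_{i\le k}\rho(\ka^{(i)})\ge\rho(\ka)-\eps$ for each finite $k$. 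This sidesteps any need for a uniform-in-$m$ perturbation estimate. To repair your proposal you would either need to actually establish the uniform estimate (nontrivial), or switch to this block-decomposition route for the lower bound.
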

\begin{proof}
Let us first note that the result is not really a statement about
the cut metric $\dcut$, but rather about the cut norm $\cn{\cdot}$.
Indeed, by definition of $\dcut$
there are rearrangements $\ka_m'$ of $\ka_m$ with $\cn{\ka_m'-\ka}\le\dcut(\ka_m,\ka)+1/m$,
say, and hence $\cn{\ka_m'-\ka}\to 0$. Since $\rho(\ka_m')=\rho(\ka_m)$, in proving
the result we may assume if we like that $\cn{\ka_m-\ka}\to 0$.

We shall prove the result in three steps.

Step 1: suppose that all $\ka_m$ are irreducible; 
this case is the heart of the proof.
For each $m$ we may find a sequence $A_n^{(m)}$ 
of symmetric $n$-by-$n$ matrices
with $\dcut(A_n^{(m)},\ka_m)\to 0$ as \ntoo. 
Indeed, this is an immediate
consequence of Lemma~\ref{l_iid}.
By \refT{th1}, if $n$ is large enough, then
\begin{equation}\label{nn}
 \Pr\Bigpar{\bigabs{C_1(G(A_n^{(m)}))/n - \rho(\ka_m)} \ge 1/m} \le 1/m^2,
\end{equation}
say. Pick $n(m)$ such that \eqref{nn} holds and 
$\dcut(A_{n(m)}^{(m)},\ka_m)\le 1/m$,
and let $A_m=A_{n(m)}^{(m)}$.
By \eqref{nn}, with probability $1$ we have
\begin{equation}\label{cccl}
 \lrabs{\frac{C_1(G(A_m))}{|G(A_m)|} - \rho(\ka_m) } \to 0.
\end{equation}
Now $\dcut(A_m,\ka_m)\le 1/m$ by our choice of $n(m)$,
while $\dcut(\ka_m,\ka)\to 0$, so $\dcut(A_m,\ka)\to 0$.
Applying \refT{th1} again,
we have $C_1(G(A_m))/|G(A_m)|\le \rho(\ka)+\op(1)$. Together
with  \eqref{cccl} this implies that
\begin{equation}\label{ls}
 \limsup \rho(\ka_m) \le \rho(\ka).
\end{equation}
If $\ka$ is irreducible, then we have $C_1(G(A_m))/|G(A_m)|\pto \rho(\ka)$,
so $\rho(\ka_m)\to \rho(\ka)$, as required. We shall return
to the lower 
bound in the case that $\ka$ is reducible later.

Step 2: we now consider the general case, where some of $\ka$ and the $\ka_m$ may be reducible.
By Theorem~6.4\nobreak(i) of \cite{kernels}, given a kernel $\ka'$ and a sequence
$\ka_n'$ tending pointwise down to $\ka'$, we have $\rho(\ka_n')\to\rho(\ka')$.
Applying this with $\ka'=\ka_m$ and $\ka_n'=\ka_m+1/n$, say,
we see that for each $m$ there is an $\eps_m<1/m$ such that
$|\rho(\ka_m')-\rho(\ka_m)|\le 1/m$, where $\ka_m'=\ka_m+\eps_m$.
Now $\ka_m'$ is irreducible, and $\cn{\ka_m'-\ka_m}\le 1/m\to 0$,
so $\dcut(\ka_m',\ka)\to 0$, and the results
of Step 1 apply. In particular, the upper bound \eqref{ls} holds,
and if $\ka$ is irreducible, then $\rho(\ka_m)\to \rho(\ka)$, as required.

Step 3: in the case where $\ka$ is reducible,
it remains to prove the lower bound corresponding to \eqref{ls}.
For this we decompose $\ka$ into irreducible kernels
as in~\cite{kernels}.
As shown there (in Lemma 5.17),
given any $\ka$ there is a finite or countable partition
$(S_i)_{i=0}^N$, $N\le\infty$, of $\sss$ into measurable sets
such that $\ka=\sum_{i\ge 1} \ka^{(i)}$ holds a.e., where each $\ka^{(i)}$ 
is zero off $\sss_i\times \sss_i$ and irreducible when restricted
to $\sss_i\times \sss_i$.
Fix $\eps>0$.
Since $\rho(\ka)=\sum \rho(\ka^{(i)})$, there is some $k<\infty$ such that
$\sum_{i=1}^k \rho(\ka^{(i)})\ge \rho(\ka)-\eps$.
Define $\ka_m^{(i)}$ to be the kernel that is equal to $\ka_m$ on $\sss_i\times\sss_i$
and zero off this set, and let $\ka_m'=\sum_{i=1}^k \ka_m^{(i)}$.
Then $\ka_m\ge \ka_m'$,
so $\rho(\ka_m)\ge \rho(\ka_m')=\sum_{i=1}^k \rho(\ka_m^{(i)})$.
Since $\cn{\ka_m-\ka}\ge  \cn{\ka_m^{(i)}-\ka^{(i)}}$ for each $i$, we have
$\cn{\ka_m^{(i)}-\ka^{(i)}}\to 0$ for each $i$. Since $\ka^{(i)}$
is irreducible, by the result of Step 2
we have $\rho(\ka_m^{(i)})\to \rho(\ka^{(i)})$.
Summing over $i$ from $1$ to $k$ it follows that
\[
 \liminf_{m\to\infty} \rho(\ka_m) \ge \sum_{i=1}^k \rho(\ka^{(i)}) \ge \rho(\ka)-\eps.
\]
Since $\eps>0$ was arbitrary we thus have
$\liminf_{m\to\infty} \rho(\ka_m) \ge \rho(\ka)$.
Together with \eqref{ls}, this completes the proof.
\end{proof}

Note that \refT{th_br} is a purely analytic statement about
branching processes and the cut metric (or cut norm -- rearrangements
change nothing here). However, the only proof we know is that above,
which goes via graphs! Corresponding results with much stronger
assumptions (monotone convergence, either upwards or downwards)
were proved in~\cite{kernels}; these weaker results were all that
was needed there.

We close this section by giving a direct
proof of a weaker form of \refT{th_br}, assuming $L^1$ convergence.
As above, rearrangement is irrelevant, so it makes no difference
whether we suppose that $\delta_1(\ka_n,\ka)\to 0$ or $\on{\ka_n-\ka}\to 0$.

\begin{theorem}\label{th_L1}
Let $\ka_n$, $n\ge 1$, and $\ka$ be kernels on a probability space $(\sss,\mu)$, with
$\on{\ka_n-\ka}\to 0$ as \ntoo.
Then $\rho(\ka_n)\to \rho(\ka)$.
\end{theorem}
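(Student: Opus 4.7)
The plan is to reduce to pointwise convergence with an $L^1$ dominating function via a subsequence argument, and then sandwich $\rho(\ka_n)$ between two monotone sequences handled by the monotone-convergence continuity of $\rho$ from \cite{kernels}.

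By the standard subsequence principle, it suffices to show that every subsequence of $(\ka_n)$ has a further subsequence along which $\rho(\ka_n) \to \rho(\ka)$. Fix such a subsequence and extract a sub-subsequence (still denoted $(\ka_n)$) with $\on{\ka_n - \ka} \le 2^{-n}$; then $\sum_m |\ka_m - \ka| \in L^1$, so $\ka_n \to \ka$ a.e.\ and each $\ka_n$ is dominated by $g := \ka + \sum_m |\ka_m - \ka| \in L^1$. Set $h_k := \sup_{n \ge k} \ka_n$ and $l_k := \inf_{n \ge k} \ka_n$; both are integrable symmetric kernels, with $h_k \downarrow \ka$ and $l_k \uparrow \ka$ a.e., and monotonicity of $\rho$ in its kernel gives $\rho(l_k) \le \rho(\ka_n) \le \rho(h_k)$ for every $n \ge k$.

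Theorem 6.4(i) of \cite{kernels} applied to $h_k \downarrow \ka$ yields $\rho(h_k) \to \rho(\ka)$, so $\limsup \rho(\ka_n) \le \rho(\ka)$. For the matching lower bound one needs $\rho(l_k) \to \rho(\ka)$. The functions $\rho_{l_k}$ are pointwise increasing in $k$ and bounded above by $\rho_\ka$; applying monotone convergence inside the integral $T_{l_k}\rho_{l_k}$ lets one pass to the limit in the fixed-point equation $\rho_{l_k} = 1 - \exp(-T_{l_k}\rho_{l_k})$, showing that $g^* := \lim_k \rho_{l_k}$ satisfies $g^* = 1 - \exp(-T_\ka g^*)$, hence $g^* \le \rho_\ka$. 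To force equality, I would decompose $\ka$ into irreducible pieces $\ka^{(i)}$ on disjoint blocks $\sss_i \times \sss_i$ via Lemma 5.17 of \cite{kernels}; on each supercritical component, monotone convergence of the relevant operator norms up to $\norm{T_{\ka^{(i)}}}$ forces the restriction of $l_k$ to $\sss_i$ to be eventually supercritical, so $\rho_{l_k}$ is nonzero on $\sss_i$ for all large $k$, and the uniqueness of the positive fixed point of the restricted $\Psi$-operator (from \cite{kernels}) pins down $g^* = \rho_\ka$ on $\sss_i$. Summing over components yields $\rho(l_k) \to \rho(\ka)$, closing the sandwich.

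The main obstacle is this monotone-from-below step: unlike monotone decrease, upward monotone convergence of kernels does not immediately force $\rho$ to converge, because the limit of the survival probabilities is a priori only \emph{some} non-negative fixed point of the defining equation, not necessarily the maximal one. The irreducible decomposition combined with uniqueness of positive fixed points on each supercritical component is what pins the limit down to $\rho_\ka$.
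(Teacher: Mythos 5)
Your proposal is correct, and it takes a genuinely different route from the paper. The paper argues by contradiction using weak-* sequential compactness of the unit ball of $L^\infty$ (requiring a preliminary reduction to a countably generated $\sigma$-field), passes to the limit in the fixed-point equation, and then rules out the degenerate case $\rhos = 0$ by a truncation argument with $\ka^M$ and a quantitative lower bound on $\rho$ from \cite{kernels}. You instead use the elementary fact that $L^1$ convergence yields a.e.\ pointwise convergence with an integrable dominating function along a subsequence, then sandwich $\rho(\ka_n)$ between $\rho(l_k)$ and $\rho(h_k)$ with $h_k = \sup_{n\ge k}\ka_n \downarrow \ka$ and $l_k = \inf_{n\ge k}\ka_n \uparrow \ka$, reducing everything to the two monotone cases. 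The upper half is immediate from \cite[Theorem 6.4(i)]{kernels}; the lower half you re-derive by passing to the limit in the fixed-point equation (legitimate here by monotone convergence, since both $l_k$ and $\rho_{l_k}$ are increasing) and then using the irreducible decomposition from \cite[Lemma 5.17]{kernels} together with the uniqueness of the nonzero fixed point on each irreducible block. Your approach buys the avoidance of weak-* topology and the separability reduction, and it handles reducibility integrally rather than by a separate reduction; the paper's approach works directly from $L^1$ convergence without the subsequence extraction. One remark: the upward monotone-convergence fact $\rho(l_k) \to \rho(\ka)$ that you sketch a proof of in your final paragraph is, I believe, already covered by \cite[Theorem 6.4]{kernels} (the paper invokes that theorem without a part number precisely for an increasing limit $\rho((1-\gamma)\ka)\upto \rho(\ka)$), so you could shortcut that step by citing it directly; your sketch does, however, contain the right ingredients --- in particular the observation that $\norm{T_{l_k|\sss_i}}\upto\norm{T_{\ka^{(i)}}}$, which forces eventual supercriticality and hence pins down $g^*=\rho_\ka$ on each supercritical block via uniqueness, with the final equality $\lim_k\rho(l_k)=\int g^*=\rho(\ka)$ following by monotone convergence without any need to literally sum over components.
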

The proof will be based on weak-$*$ convergence.
Let $f_n$, $n\ge 1$, and $f$ be functions in $L^\infty(\sss,\mu)$.
The definition of the weak-$*$ topology on $L^\infty(\sss,\mu)$
is that $f_n\wst f$ if and only if
\begin{equation}\label{wsdef}
 \int g(x)f_n(x) \dd\mu(x) \to \int g(x) f(x)\dd\mu(x)
 \hbox{ for every }g\in L^1(\sss,\mu).
\end{equation}

\begin{lemma}\label{kw}
Suppose that $\ka\in L^1(\sss\times\sss)$ and
$f_n\in L^\infty(\sss,\mu)$
with $f_n\wst 0$.
Let $h_n=T_\ka f_n$, so
$h_n(x)=\int \ka(x,y) f_n(y)\dd\mu(y)$.
Then $h_n\to 0$ in $L^1(\sss,\mu)$.
\end{lemma}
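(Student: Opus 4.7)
The plan is to reduce the $L^1$ convergence of $h_n$ to pointwise a.e.\ convergence plus a dominated convergence argument, using Fubini to pass between the product space and sections.

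First I would observe that since $\ka\in L^1(\sss^2)$, by Fubini's theorem the section $y\mapsto\ka(x,y)$ lies in $L^1(\sss,\mu)$ for $\mu$-a.e.\ $x$; call the full-measure set of such $x$ by $\sss_0$. For every $x\in\sss_0$, the function $\ka(x,\cdot)$ is an admissible test function in the definition \eqref{wsdef} of weak-$*$ convergence, so
\[
 h_n(x) \;=\; \int \ka(x,y)f_n(y)\dd\mu(y) \;\longrightarrow\; 0,
\]
i.e., $h_n(x)\to 0$ for $\mu$-a.e.\ $x$.

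Next I would produce an integrable dominant. Weak-$*$ convergent sequences in $L^\infty$ are norm-bounded by the Banach--Steinhaus (uniform boundedness) theorem, so $M\=\sup_n\sn{f_n}<\infty$. Then for every $x\in\sss_0$,
\[
 |h_n(x)| \;\le\; M\int |\ka(x,y)|\dd\mu(y),
\]
and Fubini gives that the right-hand side, viewed as a function of $x$, is integrable with $L^1$ norm $M\,\on{\ka}$.

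Finally I would invoke the dominated convergence theorem to conclude $\int |h_n(x)|\dd\mu(x)\to 0$, which is exactly $h_n\to 0$ in $L^1(\sss,\mu)$. There is no real obstacle here: the argument is a one-line application of dominated convergence, with the only mild subtlety being the need to extract uniform boundedness of the sequence $(f_n)$ from the weak-$*$ convergence hypothesis before one has an integrable dominant.
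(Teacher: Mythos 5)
Your proof is correct, and it takes a genuinely different and somewhat more elementary route than the paper's. The paper approximates $\ka$ in $L^1(\sss^2)$ by a finite-type kernel $\ka'=\sum_{i=1}^N\varphi_i(x)\psi_i(y)$, splits $\on{h_n}$ into an error term bounded by $\on{\ka-\ka'}\sn{f_n}$ plus a finite sum $\sum_i\on{\varphi_i}\bigabs{\int\psi_i f_n}$ that vanishes by weak-$*$ convergence, and then lets $\eps\to0$. You instead use Fubini to see that the section $\ka(x,\cdot)$ lies in $L^1(\sss)$ for a.e.\ $x$, so that $h_n(x)\to0$ pointwise a.e.\ directly from the definition of weak-$*$ convergence; combined with the dominant $x\mapsto M\int|\ka(x,y)|\dd\mu(y)$ (using Banach--Steinhaus as the paper also does implicitly) and dominated convergence, this gives $\on{h_n}\to0$ in one stroke. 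Your argument is shorter and avoids any approximation step; the paper's choice of finite-type approximation is natural in context since that technique recurs throughout (cf.\ \refL{l_iid} and the proof of \refL{l_k}) and dovetails with the Banach-module inequality $\onb{\int\ka(x,y)f(y)\dd\mu(y)}\le\cn{\ka}\sn{f}$ that the authors want to highlight for the cut-norm variant discussed after \refT{th_L1}. Both proofs are fully valid.
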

\begin{proof}
Note first that by the uniform boundedness principle we have
$C=\sup \sn{f_n}<\infty$. (In fact, in the application, each $f_n$ is bounded by $1$.)

Let $\eps>0$. As in the proof of \refL{l_iid},
there is a finite-type kernel $\ka'$
such that $\on{\ka-\ka'}<\eps$.
We may express $\ka'$ as
$\ka'(x,y)=\sum_{i=1}^N \varphi_i(x)\psi_i(y)$ for $\varphi_i$,
$\psi_i\in L^1$. 
(In fact, we may take each $\varphi_i$ or $\psi_i$ to be a constant
times a characteristic function.)
Now
\begin{multline*}
 \on{h_n} = \onb{\int \ka(x,y)f_n(y)\dd\mu(y)}  \\
 \le \int\left| (\ka(x,y)-\ka'(x,y))f_n(y) \right|\dd\mu(x)\dd\mu(y)
 + \sum_{i=1}^N \onb{ \int \varphi_i(x)\psi_i(y)f_n(y)\dd\mu(y) }.
\end{multline*}
The first term above is at most $\on{\ka-\ka'}\sn{f_n}\le \eps C$.
The second term is exactly
\[
 \sum_{i=1}^N \on{\varphi_i}\lrabs{\int \psi_i(y)f_n(y)\dd\mu(y)}.
\]
Each integral tends to zero by the definition \eqref{wsdef} of the
weak-$*$ topology, so it follows that $\limsup \on{h_n}\le \eps C$.
Since $\eps>0$ was arbitrary, the result follows.
\end{proof}

With this preparation behind us, we turn to the proof of \refT{th_L1}.
\begin{proof}[Proof of Theorem~\ref{th_L1}]
We may assume without loss of generality that the $\sigma$-field $\cF$
on $\sss$ where $\mu$ is defined is countably generated, and thus
$L^1(\sss,\mu)$ is separable.  
One way to
see this is to note that otherwise we can replace $\cF$ by a countably
generated sub-$\sigma$-field $\cF_0$ such that each $\ka_n$ is
$\cF_0\times\cF_0$-measurable; alternatively,
by the results of~\cite{SJ210} we may assume without loss of
generality 
that $\sss=[0,1]$, with $\mu$ Lebesgue measure.

Suppose for simplicity that $\ka$ is irreducible;
arguing as in the proof of Theorem~\ref{th_br}, it is not hard to reduce 
the general case to this case.

Suppose for a contradiction that $\on{\ka_n-\ka}\to 0$ but $\rho(\ka_n)\not\to\rho(\ka)$.
Passing to a subsequence, we may assume that $|\rho(\ka_n)-\rho(\ka)|$ is bounded
away from zero. To obtain a contradiction it then
suffices to show that for some subsequence $(\ka_{n_i})$
of $(\ka_n)$ we have $\rho(\ka_{n_i})\to\rho(\ka)$.

Let $\rho_n(x)=\rho_{\ka_n}(x)$ be the survival probability of the branching
process $\bp_{\ka_n}(x)$, started with a single particle of type $x$.
As shown in~\cite{kernels}, the function $\rho_n$ satisfies
\begin{equation}\label{rneq}
 \rho_n=1-\exp(-T_{\ka_n} \rho_n).
\end{equation}
It is well known that
the unit ball of $L^\infty(\sss,\mu)$ 
is sequentially compact in the
weak-$*$ topology when $L^1(\sss,\mu)$ is separable. (The unit ball of $L^\infty$ is always
compact, but not necessarily sequentially compact otherwise.)
For the special case $\sss=[0,1]$, let $(f_n)$ be a sequence
in the unit ball of $L^\infty([0,1])$. This sequence
has a subsequence $(f_{n_k})$ such that $\int_I f_{n_k}$ converges 
for each of the countably many intervals $I$ with rational endpoints.
Since the $f_{n_k}$ are uniformly bounded, this is enough to ensure
weak-$*$ convergence.

Since $\sn{\rho_n}\le 1$ for every $n$, by sequential compactness there
is some $\rhos\in L^\infty(\sss,\mu)$ and some subsequence of $(\ka_n)$
along which $\rho_n\wst\rhos$. From now on we restrict our attention to such a
subsequence.

Now
\[
 \on{T_{\ka_n}\rho_n - T_\ka \rho_n} \le \on{\ka_n-\ka}\sn{\rho_n} \le \on{\ka_n-\ka} \to 0.
\]
Also, by \refL{kw}, $\on{T_\ka \rho_n -T_\ka \rhos}\to 0$.
Hence $T_{\ka_n}\rho_n \to T_\ka\rhos$ in $L^1$.
Passing to a subsequence, we may assume that $T_{\ka_n}\rho_n \to T_\ka\rhos$ a.e.
But then, using \eqref{rneq},
\[
 \rho_n = 1- e^{-T_{\ka_n}\rho_n} \to 1-e^{-T_\ka \rhos} \hbox{ a.e.}
\]
From \eqref{wsdef} and dominated convergence, it follows that
\[
 \rho_n \wst 1-e^{-T_\ka \rhos}.
\]
Since $\rho_n\wst \rhos$, it follows that $\rhos = 1-e^{-T_\ka \rhos}$ a.e.

Let $\rho(x)$ denote the survival probability of $\bp_{\ka}(x)$.
Since $\ka$ is irreducible, by \cite[Theorem 6.2]{kernels},
either $\rhos=\rho$ a.e.\ or $\rhos=0$ a.e.
In the first case, 
\[
 \rho(\ka_n) = \int \rho_n(x)\dd\mu(x) \to \int \rhos(x)\dd\mu(x) =\rho(\ka),
\]
as desired.
In the second case, we have $\rho(\ka_n)\to 0$ similarly.

All that remains is to rule out the possibility that $\rho(\ka_n)\to 0<\rho(\ka)$.
This is not hard using the results in~\cite{kernels}.
For $M>0$, let $\ka^M$ denote the pointwise minimum of $\ka$ and $M$,
and define $\ka_n^M$ similarly. Suppose that $\rho(\ka)>0$. Then
$\norm{\Tk}>1$. As shown in the proof of
\cite[Lemma 5.16]{kernels},
we have $\norm{T_{\ka^M}}\upto \norm{\Tk}$
as $M\to\infty$, so there is some $M$ with $c=\norm{T_{\ka^M}}>1$. Fix such an $M$.
Since
\begin{equation}\label{contr}
 \on{\ka_n^M -\ka^M} \le \on{\ka_n-\ka} \to 0,
\end{equation}
and the kernels $\ka_n^M$ and $\ka^M$ are uniformly bounded, we have
$\norm{T_{\ka_n^M}}\to \norm{T_{\ka^M}}=c>1$. In particular, for all
large enough $n$ we have $\norm{T_{\ka_n^M}} > (c+1)/2 >1$.
Finally, it follows from \cite[Remark 5.14]{kernels} that we have
\[
 \rho(\ka_n^M) \ge \frac{\norm{T_{\ka_n^M}} -1}{\sup \ka_n^M} \ge \frac{(c-1)/2}{M} >0.
\]
Since $\rho(\ka_n)\ge \rho(\ka_n^M)$ it follows that $\rho(\ka_n)\not\to 0$, and the proof
is complete.
\end{proof}

If we assume cut convergence instead of $L^1$ convergence, then using the fact that
\[
 \onb{\int \ka(x,y) f(y)\dd\mu(y)} \le \cn{\ka}\sn{f}
\]
in place of the corresponding observation for the $L^1$ norm, the first part of the
proof above goes through unchanged, showing that $\rhos\to \rho$
a.e.\ or $\rhos\to 0$. 
Unfortunately, we do not know how to exclude the possibility that $\rho(\ka_n)\to 0<\rho(\ka)$,
except by appealing to \refT{th1}, i.e., working with graphs.
The problem is that the relation equivalent to \eqref{contr} for the cut norm rather
than the $L^1$ norm does not hold in general.
Of course, given that \refT{th_br} is true, it is almost guaranteed that 
it has a direct analytic proof.

As discussed in~\cite[Section 2]{BRsparse}, until recently there was
another
example of an analytic fact about kernels whose only known proof involved
graphs (and the cut metric), namely that two bounded kernels may be coupled
to agree a.e.\ if and only if their `graphical moments' (or subgraph counts)
are equal. This follows from the results of Borgs, Chayes, Lov\'asz, S\'os
and Vesztergombi~\cite{BCLSV:1} concerning metrics for graphs (see~\cite{BRsparse}).
However, by now there are analytic proofs:
Janson and Diaconis~\cite{SJ209} showed that it also follows from
results of Hoover and Kallenberg on exchangeable arrays.
A direct (and far from simple) proof has recently been given by
Borgs, Chayes and Lov\'asz~\cite{BCL:unique}.

\section{Proofs of Theorems \ref{th1}--\ref{th_cb}}\label{sec_proof}

In this section we shall prove our main results;
the strategy of the proof of \refT{th1} is as follows. First,
in Subsection~\ref{ss_elim}, we shall
show that if each $\ka_n$ is an $n$-by-$n$ kernel and $\dcut(\ka_n,\ka)\to 0$,
then almost all of the weight of $\ka_n$ comes from values
that are $o(n)$. This will allow us to assume that all edge probabilities
in $G(A_n)$ are $o(1)$.
It then follows that the expected number of small tree components
in $G(A_n)$ is close to what it `should be', i.e., $n$ times a certain
function of the kernel $\ka_{A_n}$.
In Subsection~\ref{ss_ti} we show
that this function is continuous with respect to the cut metric.
This then tells us that we have almost the `right' number of vertices
in small components; the details are given in Subsection~\ref{ss_s}.
Finally, in Subsection~\ref{ss_conn} we complete
the proof of \refT{th1} by showing that 
in the irreducible case, almost
all vertices in large components are in a single component, using a method
from Bollob\'as, Borgs, Chayes and Riordan~\cite{QRperc}.
In Subsection~\ref{ss_red} we treat the reducible case, proving \refT{th2}.
Finally, in Subsection~\ref{ss_stab} we prove our stability and
concentration results, Theorems~\ref{th_stab} and~\ref{th_cb}.

For convenience, in this section we assume, as we may, that
all kernels are on $[0,1]$, unless explicitly stated otherwise. 

\subsection{Eliminating large edge weights}\label{ss_elim}

In Theorem 2.1 of~\cite{BRsparse2}
it was shown that if $(G_n)$ is a sequence
of graphs in which $G_n$ has $n$ vertices and $O(n)$ edges,
$A_n$ is the adjacency matrix of $G_n$,
$\ka$ is a kernel and $\dcut(nA_n,\ka)\to 0$, then $\ka=0$ a.e.\ and $e(G_n)=o(n)$.
A simple modification of the proof gives the following lemma. Recall
that a matrix denoted $A_n$ is assumed to be $n$-by-$n$.

\begin{lemma}\label{l_on}
Suppose that $\ka$ is a kernel and $(A_n)$ a sequence of non-negative matrices
such that $\dcut(A_n,\ka)\to 0$.
Then there is some function $M(n)$ with $M(n)=o(n)$ such that only $o(n)$ entries
of $A_n$ exceed $M(n)$, and the sum of these entries is $o(n^2)$.
\end{lemma}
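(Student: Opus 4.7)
The plan is to mimic the argument of Theorem~2.1 of \cite{BRsparse2}, which handles the special case $a_{ij}\in\{0,1\}$, by applying that result to the indicator matrix of the ``heavy'' entries of $A_n$. A preliminary reduction by diagonalization shows that it suffices to prove the following for every fixed $\eps>0$: for all large $n$, the set $E_n^{\eps}=\{(i,j):a_{ij}\ge\eps n\}$ satisfies both $|E_n^{\eps}|\le\eps n$ and $\sum_{(i,j)\in E_n^{\eps}} a_{ij}\le\eps n^2$. Indeed, given these statements with $\eps=1/k$ and $k=k(n)\to\infty$ sufficiently slowly, one recovers the desired $M(n)=o(n)$.

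For the count, suppose for contradiction that $|E_n^{\eps}|\ge\eps n$ along some subsequence. Since $\int A_n\to\int\ka<\infty$ by the cut convergence, Markov's inequality yields $|E_n^{\eps}|\le n\int A_n/\eps=O(n)$. The $0/1$ matrix $B_n=\mathbf{1}_{E_n^{\eps}}$ is then the adjacency matrix of a (possibly loopy) graph with $O(n)$ edges. Crucially, $nB_n\le A_n/\eps$ pointwise; since the cut norm is monotone on non-negative functions, $\cn{nB_n}\le\cn{A_n}/\eps\le\int A_n/\eps=O(1)$. By compactness of the relevant kernel space modulo $\dcut$ (cf.\ \cite{BCLSV:1}), one can pass to a further subsequence along which $\dcut(nB_n,\ka')\to 0$ for some kernel $\ka'$. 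Theorem~2.1 of \cite{BRsparse2} now forces $|E_n^{\eps}|=o(n)$, contradicting the assumption.

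For the sum bound, I exploit the count bound just established. Let $R_n\subseteq[n]$ consist of the row indices containing at least one heavy entry; then $|R_n|\le|E_n^{\eps}|=o(n)$, and the set $R_n'=\bigcup_{i\in R_n}((i-1)/n,i/n]\subseteq[0,1]$ has Lebesgue measure $o(1)$. Choosing a rearrangement $\ka_{A_n}^{(\tau)}$ with $\cn{\ka_{A_n}^{(\tau)}-\ka}\to 0$ and writing $\widetilde R_n=\tau^{-1}(R_n')$, the sum of heavy entries is bounded by
\[
\sum_{i\in R_n,\,j\in[n]} a_{ij}= n^2\int_{R_n'\times[0,1]} \ka_{A_n}= n^2\int_{\widetilde R_n\times[0,1]}\ka_{A_n}^{(\tau)}.
\]
Cut convergence replaces the last integral by $\int_{\widetilde R_n\times[0,1]}\ka$ up to additive error $\cn{\ka_{A_n}^{(\tau)}-\ka}=o(1)$; the dominant term then tends to $0$ by the absolute continuity of the integral of $\ka\in L^1$ on sets of measure $\mu(\widetilde R_n)=o(1)$. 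Multiplying through by $n^2$ yields $\sum_{(i,j)\in E_n^{\eps}} a_{ij}=o(n^2)$.

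The principal obstacle is the count step, where one must first observe that the heavy-entry indicator sequence $nB_n$ has uniformly bounded cut norm (via monotonicity together with the pointwise bound $nB_n\le A_n/\eps$), then pass to a cut-convergent subsequence by compactness, and finally appeal to the $0/1$ case of \cite{BRsparse2}. The sum bound is then a routine consequence: the heavy entries occupy $o(n)$ rows, reducing the required estimate to a small-measure integral to which the uniform absolute continuity of the integral of $\ka\in L^1$ applies.
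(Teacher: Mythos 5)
The central difficulty is your compactness claim in the count step: you assert that because $\cn{nB_n}=O(1)$, one can pass to a subsequence along which $nB_n$ converges in $\dcut$ to some kernel $\ka'$. This is false. The compactness theorem of Lov\'asz--Szegedy and Borgs--Chayes--Lov\'asz--S\'os--Vesztergombi applies to \emph{uniformly bounded} kernels (graphons taking values in $[0,1]$, say); it does not extend to sequences that are merely bounded in cut norm or in $L^1$. Your matrices $nB_n$ take the values $0$ and $n$, so they are not uniformly bounded. A concrete counterexample to the compactness you need: the kernel $W_n$ equal to $n^2$ on $[0,1/n]^2$ and $0$ elsewhere has $\cn{W_n}=\int W_n=1$ for all $n$, yet no subsequence converges in $\dcut$ to an $L^1$ kernel, because along any subsequence the full mass of $1$ sits inside a product set of measure $1/n^2\to 0$, while for any fixed $\ka\in L^1$ every rearrangement $\ka'\sim\ka$ satisfies $\sup_{\mu(A),\mu(B)\le\delta}\int_{A\times B}\ka'\to 0$ as $\delta\to 0$ by absolute continuity. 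The same obstruction can arise for $nB_n$: nothing prevents the heavy entries from concentrating on a vanishing-measure block, in which case $nB_n$ has no cut limit at all, and Theorem~2.1 of \cite{BRsparse2} is never applicable. Pointwise domination $nB_n\le A_n/\eps$ only yields a cut-norm bound and therefore does not transfer the $\dcut$-convergence of $A_n$ to $nB_n$; cut convergence of a sequence does not pass to pointwise-dominated pieces.

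The remainder of your argument (the reduction by diagonalization, the $L^1$/cut-norm bound $\cn{nB_n}\le\int A_n/\eps$, and the sum bound via rows meeting heavy entries together with absolute continuity of the integral of $\ka$) is reasonable, but the sum bound relies on the count bound as input, so the whole argument fails at the compactness step. The paper's proof takes an unconditional route that avoids any compactness: it works directly with the sum bound (showing that for each $c>0$ the total weight of entries exceeding $cn$ is eventually at most $c^2 n^2$), proceeding by contradiction via a maximal matching $M_n$ in the graph of heavy entries. When $|V(M_n)|=o(n)$, it uses absolute continuity of $\int\ka$ against a small strip to get a contradiction, much as your sum step does; when $|V(M_n)|=\Theta(n)$, it uses a random bipartition of the matching and the cut-norm approximation of $A_n$ by a rearrangement of $\ka$ to derive a quantitative contradiction. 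You would need something of that flavour --- or some other device that does not invoke $\dcut$-compactness of unbounded kernels --- to make your count step go through.
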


A consequence of this is that if $A_n'$ is obtained from $A_n$ by taking the pointwise
minimum with $M(n)$, then $\dcut(A_n',\ka)\to 0$.

\begin{proof}
Although the details are almost exactly the same as in~\cite{BRsparse2}, we spell
them out. We write $\ka_n$ for $\ka_{A_n}$.

Since $\dcut(\ka_n,\ka)\to 0$, we may choose rearrangements
$\karn$ of $\ka$ such that
\begin{equation}\label{conv}
 \cn{\ka_n-\karn}\to 0.
\end{equation}

It suffices to show that for any $c>0$, the sum of the entries of $A_n$ exceeding
$cn$ is at most $c^2n^2$ for $n$ large enough. This implies that there are
at most $cn$ such entries, and the result then follows by
letting $c$ tend to $0$.

Suppose for a contradiction that there is some $c>0$ 
such that, for infinitely many $n$,
the sum of the entries of $A_n$ exceeding $cn$ is at least $c^2n^2$;
from now on we fix such a $c$ and restrict our attention to the corresponding
values of $n$.
Let $G_n$ be the graph whose edges correspond to those entries of $A_n$ which exceed
$cn$. Let $M_n$ be a largest matching in $G_n$.

Suppose first that $|V(M_n)|/n\to 0$. Let $S_n$ be the subset of $[0,1]$
corresponding to the vertex set of $M_n$, so $\mu(S_n)=|V(M_n)|/n\to 0$.
Every edge of weight at least $cn$ meets a vertex of $M_n$, so
\[
 \int_{S_n\times [0,1]} \ka_n =\frac{1}{n^2} \sum_{v\in V(M_n)}\sum_w a_{vw}
  \ge \frac{1}{2n^2} (cn)^2 = c^2/2,
\]
where the factor 2 accounts for the double counting of edges within $V(M_n)$.

From \eqref{conv}, writing $S_n'$ for $\rn(S_n)$, we have
\[
 \int_{S_n'\times [0,1]} \ka = \int_{S_n\times [0,1]} \karn
 \ge \int_{S_n\times[0,1]} \ka_n-o(1) \ge c^2/2-o(1),
\]
so $\int_{S_n'\times [0,1]}\ka \not\to 0$. Since $\mu(S_n'\times [0,1])=\mu(S_n')=\mu(S_n)\to 0$,
this contradicts integrability of $\ka$.

Passing to a subsequence, we may thus assume that for some $a>0$, every 
maximal matching $M_n$ meets at least $an$ vertices.

Since $\ka$ is integrable, we have $\int \ka 1_{\{\ka>C\}}\to 0$ as $C\to\infty$,
where $1_{\{\ka>C\}}: [0,1]^2\to \{0,1\}$
is the indicator of the event that $\ka(x,y)>C$. In particular,
there is a $C<\infty$ with $\int \ka 1_{\{\ka>C\}}\le ac/4$.
Fix an $n$ with $n>4C/(ac)$, noting
that if $S\subset [0,1]^2$ satisfies $\mu(S)\le 1/n$, then
\begin{equation}\label{smsm}
 \int_S \ka \le C\mu(S) + \int \ka 1_{\{\ka>C\}} \le C/n+ac/4 \le ac/2.
\end{equation}
Choosing $n$ large enough, we may assume from \eqref{conv} that
there is a $\ka'=\karn \sim\ka$ with
\begin{equation}\label{cnc}
 \cn{\ka_n-\ka'}\le ac/25.
\end{equation}

Given subsets $U$ and $\VW$ of $[n]$, let
\[
 A_n(U,\VW) = \sum_{u\in U}\sum_{\vw\in \VW} a_{u\vw}.
\]

Let $M_n=\{u_1\vw_1,\ldots,u_r\vw_r\}$ be a matching in $G_n$ with $r\ge an$,
and set $U=\{u_i\}$ and $\VW=\{\vw_i\}$.
Identifying subsets of $[n]$ with the corresponding unions
of intervals of length $1/n$,
from \eqref{cnc} we have
\[
 \left| \int_{U\times \VW}\ka'  - \frac{A_n(U,\VW)}{n^2} \right| \le ac/25.
\]
Let $U'$ be a random subset of $U$ obtained by selecting each vertex
independently with probability $1/2$, and let $\VW'$ be the complementary
subset of $\VW$, defined by $\VW'=\{\vw_i: u_i\notin U_i\}$.
The edges of our matching $M_n$ never appear as edges from $U'$ to $\VW'$.
On the other hand, any other edge $u_i\vw_j$, $i\ne j$,
from $U$ to $\VW$ has probability $1/4$ of appearing. Hence,
\[
 \E\bb{A_n(U',\VW')} = \frac{A_n(U,\VW)}{4} - \frac{1}{4}\sum_{i}A_{u_i\vw_i}.
\]
Similarly, writing $S\subset [0,1]^2$ for the union of the $r$
$1/n$-by-$1/n$ squares corresponding to the edges $u_i\vw_i$,
we have
\[
 \E\left( \int_{U'\times \VW'} \ka' \right) = \frac{1}{4}\int_{U\times \VW}\ka'
  - \frac{1}{4}\int_S \ka'.
\]
Combining the last three displayed equations using the triangle
inequality, and noting that $\mu(S)=r/n^2\le 1/n$, it follows that
\begin{eqnarray*}
 \left| \E\left( \int_{U'\times \VW'} \ka' \right)
     -\frac{1}{n^2}\E\bb{A_n(U',\VW')}\right|
 &\ge& \frac{1}{4n^2}\sum_i A_{u_i\vw_i}-\frac{1}{4}\int_S \ka' - ac/100 \\
 &\ge& \frac{(an)(cn)}{4n^2} - ac/8 - ac/100 > ac/16,
\end{eqnarray*}
using \eqref{smsm}.
On the other hand, from \eqref{cnc},
\[
 \left |\int_{U'\times \VW'} \ka' - \frac{A_n(U',\VW')}{n^2} \right| \le ac/25
\]
always holds, which implies a corresponding upper bound on the difference
of the expectations. Since $ac/25<ac/16$, we obtain a contradiction, completing the
proof.
\end{proof}

\subsection{Tree integrals and the cut metric}\label{ss_ti}

In this subsection we shall show that a certain function of a kernel
whose role will become clear later is continuous (in fact Lipschitz) with
respect to the cut metric. Here there is no particular
reason to consider only the standard ground space; instead
we consider an arbitrary probability space.

Let $(\sss,\F,\mu)$ be a probability space. 
Let $\cw$ be the set of all integrable non-negative
functions $W:\sss\times\sss\to\ooo$, and
let $\cws$ be the subset of symmetric functions.
The integrability assumption is for convenience only; the results
extend to arbitrary measurable non-negative functions if one is
a little careful with infinities in the proofs. However, we shall
only need the integrable case.

For $W\in\cw$, let
\begin{equation}\label{glw}
  \glw(x)\=\int_\sss W(x,y)\dd\mu(y)
\end{equation}
and
\begin{equation}\label{glp}
  \gl'_W(y)\=\int_\sss W(x,y)\dd\mu(x)
\end{equation}
denote the marginals of $W$; we allow the
value $+\infty$, although by
our assumption that $W$ is integrable, $\glw(x)<\infty$ a.e.\
and $\gl'_W(y)<\infty$ a.e.
Note that $\glw$ and $\gl'_W$ are measurable functions
from $\sss$ to $[0,\infty]$. 

Throughout this subsection we work with \eqref{cntdef}
as the definition of the cut norm: if $W\in L^1(\sss^2)$, then
\begin{equation}\label{cutnorm}
  \cn W \=\sup_{\sn{f}\le1,\,\sn{g}\le1}
\Bigabs{\int_{\sss^2} f(x)g(y) W(x,y)\dd\mu(x)\dd\mu(y)}.
\end{equation}

It is immediate from the definition \eqref{cutnorm} that
\begin{equation}
  \label{cutnorm1}
\cutnorm{W} \le \norm{W}\liss
\end{equation}
and that,
for any bounded functions $h$
and $k$ on $\sss$,
\begin{equation}
  \label{cutnorm2}
\cutnorm{h(x)k(y)W(x,y)} \le \sn{h}\sn{k}\cutnorm{W}.
\end{equation}

Before stating the main result of this subsection, let us note that
if two kernels are close in cut norm, then their marginals are close
in $L^1$. (This is doubtless well known, but in any case very
easy to see.)

\begin{lemma}\label{L0}
If\/ $W_1,W_2\in\cw$, then 
$\norm{\gl_{W_1}-\gl_{W_2}}\lis\le\cutnorm{W_1-W_2}$.
\end{lemma}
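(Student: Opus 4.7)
The plan is to unwind the definitions and recognize that the claim is essentially immediate from \eqref{cutnorm}, with $g\equiv 1$ and $f$ chosen to realize the $L^1$ norm of the difference of marginals.

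Set $W\=W_1-W_2$, noting that $W$ need not be non-negative but is integrable, so $\gl_W(x)\=\int W(x,y)\dd\mu(y)$ is well defined for a.e.\ $x$ and satisfies $\gl_W=\gl_{W_1}-\gl_{W_2}$ a.e. Then
\[
  \norm{\gl_{W_1}-\gl_{W_2}}\lis
  =\int_\sss \lrabs{\int_\sss W(x,y)\dd\mu(y)}\dd\mu(x),
\]
and the trick is to strip off the outer absolute value by introducing
$f(x)\=\operatorname{sgn}\bb{\int W(x,y)\dd\mu(y)}$, which is measurable with $\sn{f}\le 1$.

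With that choice and $g\equiv 1$, Fubini gives
\[
 \int_\sss \lrabs{\int_\sss W(x,y)\dd\mu(y)}\dd\mu(x)
 =\int_{\sss^2} f(x) g(y) W(x,y)\dd\mu(x)\dd\mu(y),
\]
and the right-hand side is bounded by $\cutnorm{W}=\cutnorm{W_1-W_2}$ directly from \eqref{cutnorm}. That completes the argument.

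There is no real obstacle here; the only thing to be a bit careful about is that the supremum in \eqref{cutnorm} is taken over $\sn{f},\sn{g}\le 1$ with no sign restriction, which is exactly what permits us to use $f=\operatorname{sgn}(\gl_W)$ to extract the $L^1$ norm. If one instead worked with the definition \eqref{cnodef}, one would pick up the factor of $4$ from the equivalence $\cnone{W}\le\cntwo{W}\le 4\cnone{W}$, which is why the lemma is stated under the convention, fixed earlier in this subsection, that $\cn{\cdot}$ means $\cntwo{\cdot}$.
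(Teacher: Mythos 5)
Your proof is correct and takes essentially the same approach as the paper: both fix $g\equiv 1$, reduce to the identity $\int f(x)\bigl(\gl_{W_1}(x)-\gl_{W_2}(x)\bigr)\dd\mu(x)=\int_{\sss^2}f(x)g(y)\bigl(W_1-W_2\bigr)\dd\mu$, and choose $f$ to be the sign of the difference of marginals (or, equivalently, take the supremum over $\sn{f}\le 1$). The paper even offers the sign-function choice parenthetically, so the arguments coincide.
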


\begin{proof}
If  $f\in L^\infty(\sss)$, then
\begin{equation*}
  \int_\sss\bigpar{\gl_{W_1}(x)-\gl_{W_2}(x)}f(x)\dd\mu(x)
=\int_{\sss^2} f(x)\bigpar{ W_1(x,y)-W_2(x,y)}\dd\mu(x)\dd\mu(y)
\end{equation*}
and the result follows from \eqref{cutnorm}, 
letting $g(y)=1$ and
taking the supremum over all $f$ with $\sn{f}\le 1$.
(Or simply taking $f(x)$ equal to the sign of $\gl_{W_1}(x)-\gl_{W_2}(x)$.)
\end{proof}

We now turn to the integrals we shall consider, one for each finite graph $F$.
Given a finite graph $F$ with vertex set \set{1,\dots,r} and $W\in \cws$, let
\begin{equation}\label{tx}
  \tx(F,W)
\=
\int_{\sss^r} \prod_{ij\in E(F)} W(x_i,x_j) 
\prod_{k=1}^{r} e^{-\glw(x_k)}
\dd \mu(x_1)\dots \dd \mu(x_r).
\end{equation}
The reason for the notation is that $\tx(F,W)$ corresponds roughly
to $1/n$ times the expected number of isolated copies of $F$ in a certain random graph
defined from $W$.

Our aim in this subsection is to prove the following result.

\begin{theorem}\label{th_FW}
Let $F$ be a tree. Then $W\mapsto \tx(F,W)$ is a bounded map on $\cws$
that is
Lipschitz continuous in the cut norm. In other words, there exists a
constant $C$ (depending on $F$ only) such that $\tx(F,W)\le C$ for all
$W\in\cws$, and $|\tx(F,W)-\tx(F,W')|\le C \cn{W-W'}$ for all
$W,W'\in\cws$.
\end{theorem}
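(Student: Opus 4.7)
My plan is first to establish boundedness of $\tx(F,\cdot)$ by a subtree integration lemma, and then to deduce the Lipschitz bound by interpolation along the segment from $W'$ to $W$ in $\cws$.

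\textbf{Subtree estimate and boundedness.} For a rooted finite tree $(T,u)$ and $W\in\cws$, I set
\[
 J_T(W,x_u) \= \int_{\sss^{|T|-1}} \prod_{e\in E(T)} W(x_e) \prod_{k\in V(T)\setminus\{u\}} e^{-\glw(x_k)} \dd\mu,
\]
and would prove by induction on $|T|$ that $J_T(W,x_u)\le C(T)\,\glw(x_u)^{d_T(u)}$, where $d_T(u)$ is the degree of $u$ in $T$ and $C(T)$ depends only on $T$. Beyond Fubini, the only ingredient is the elementary inequality $s^m e^{-s}\le m^m e^{-m}$, applied after integrating out each of the root's child subtrees against $W(x_u,\cdot)e^{-\glw(\cdot)}$. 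Rooting $F$ at an arbitrary vertex $r_0$ and applying the same inequality once more to $\glw(x_{r_0})^{d_F(r_0)}e^{-\glw(x_{r_0})}$ then yields $\tx(F,W)\le C(F)$ uniformly in $W\in\cws$.

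\textbf{Lipschitz bound via interpolation.} I set $W_t\=(1-t)W'+tW\in\cws$ for $t\in[0,1]$. A routine domination argument (using the boundedness of $\tx(F,\cdot)$ applied to $W_t$) shows that $\tx(F,W_t)$ is absolutely continuous on $[0,1]$, and hence
\[
 \tx(F,W)-\tx(F,W') = \int_0^1 \tfrac{d}{dt}\tx(F,W_t) \dd t.
\]
Differentiating the integrand of $\tx(F,W_t)$ produces an \emph{edge term} $E_{ij}(t)$ for each $ij\in E(F)$, carrying an extra factor $(W-W')(x_i,x_j)$, and a \emph{vertex term} $V_k(t)$ for each $k\in V(F)$, carrying an extra factor $-(\glw-\gl_{W'})(x_k)$ from $\tfrac{d}{dt}e^{-\gl_{W_t}(x_k)}$.

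\textbf{Bounding each derivative term by $\cn{W-W'}$.} For $E_{ij}(t)$, removing the edge $ij$ from $F$ produces subtrees $F_i\ni i$ and $F_j\ni j$; integrating out $F_i\setminus\{i\}$ and $F_j\setminus\{j\}$ via the subtree lemma expresses the term as $\int(W-W')(x_i,x_j)\,G_i(x_i)\,G_j(x_j)\dd\mu(x_i)\dd\mu(x_j)$, where $G_i(x_i)=e^{-\gl_{W_t}(x_i)}J_{F_i}(W_t,x_i)\le C\,\gl_{W_t}(x_i)^{d_F(i)-1}e^{-\gl_{W_t}(x_i)}\le C'(F)$ by one more use of $s^m e^{-s}\le m^m e^{-m}$, and similarly $\sn{G_j}\le C(F)$; the definition \eqref{cutnorm} of the cut norm then yields $|E_{ij}(t)|\le C(F)\cn{W-W'}$. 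For $V_k(t)$, removing the vertex $k$ produces a forest of $d_k$ subtrees, and the analogous calculation gives $V_k(t)=\pm\int(\glw-\gl_{W'})(x_k)H(x_k)\dd\mu(x_k)$ with $\sn{H}\le C(F)$, so \refL{L0} yields $|V_k(t)|\le C(F)\norm{\glw-\gl_{W'}}\lis\le C(F)\cn{W-W'}$. Summing the $2r-1$ derivative terms and integrating in $t$ closes the proof. The heart of the argument, and the step I expect to be the main obstacle, is the subtree lemma: the tree hypothesis on $F$ ensures that deleting any edge or vertex disconnects the integrand into a product over subtrees, so the induction closes cleanly using only $s^m e^{-s}\le m^m e^{-m}$. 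Without the tree hypothesis $\tx(F,\cdot)$ is not even bounded on $\cws$, so no such Lipschitz estimate is available.
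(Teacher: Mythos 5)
Your proof is correct in its essentials, but it takes a genuinely different route from the paper's. The paper first rewrites $\tx(F,W)$ as $\ty\bigl(F,(W^{(1/d_i,1/d_j)})_{ij}\bigr)$ by attaching the exponential weights to edges, then proves that $W\mapsto W^{(a,b)}$ is cut-norm Lipschitz (Lemma \ref{L1}) via a telescoping decomposition of $W_1^{(a,b)}-W_2^{(a,b)}$ that requires an unpleasant four-way case analysis according to the signs of $\gl_1-\gl_2$ and $\gl_1'-\gl_2'$ — the authors themselves call this "the hardest part of the proof." Your interpolation along $W_t=(1-t)W'+tW$ sidesteps that case analysis completely: differentiating $\tx(F,W_t)$ in $t$ automatically produces one difference factor per term, with all remaining factors built from the \emph{same} kernel $W_t$, so the inequality $s^m e^{-s}\le m^m e^{-m}$ applies with no bookkeeping about which of $W$, $W'$ appears where. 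Your subtree lemma plays the role of the paper's Lemma \ref{L2root}, but with the sharper conclusion $J_T(W,x_u)\le C(T)\glw(x_u)^{d_T(u)}$ rather than a constant bound assuming bounded marginals; combined with your use of $\cn{\cdot}$ on the edge term and of Lemma \ref{L0} on the vertex term, this reproduces what the paper's Lemma \ref{L2} delivers. Where your write-up is genuinely thin is the sentence asserting that $\tx(F,W_t)$ is absolutely continuous on $[0,1]$ by a "routine domination argument (using the boundedness of $\tx(F,\cdot)$ applied to $W_t$)." Boundedness of $\tx$ itself gives nothing here; what you need is a $t$-uniform integrable majorant for $\partial_t$ of the integrand, so that the Leibniz rule applies. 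On any $[\delta,1-\delta]$ this is available because $\gl_{W_t}\ge\tfrac{\delta}{2}(\gl_W+\gl_{W'})$ and $W_t\le W+W'$, after which a $\delta$-dependent version of your subtree lemma supplies the majorant; but that bound degenerates as $\delta\to0$, so the endpoints $t=0,1$ need a separate continuity argument (a nested dominated-convergence induction over the rooted subtrees, with the constant bound on $\gl_{W_t}^{d}e^{-\gl_{W_t}}$ doing the work at each level). None of this is hard, and the overall strategy is sound and arguably more transparent than the paper's, but I would not call that step "routine," and you should not attribute it to the boundedness of $\tx$.

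Two minor remarks. First, in bounding the edge term you invoked "\eqref{cutnorm}"; that is the right tool and it works with $\sn{G_i},\sn{G_j}\le C(F)$, but note you are using the $\cntwo{\cdot}$ form of the cut norm, so the constant picks up the $\le4$ factor if the ambient result is stated for $\cnone{\cdot}$ — harmless since the theorem has an unspecified $C$. Second, the closing remark that "without the tree hypothesis $\tx(F,\cdot)$ is not even bounded on $\cws$" is stated without proof and is in fact not obvious for unicyclic $F$ (the obvious block-kernel family gives a bounded supremum for $e(F)=|F|$); the assertion is clear when $e(F)>|F|$, but since it is only a side comment it does not affect the proof.
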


We shall prove \refT{th_FW} via a sequence of lemmas. The first step
will
be to transform \eqref{tx} to an integral of a product over edges
only, rather than over edges and vertices. This will involve
considering asymmetric kernels, as well as different kernels
for different edges of $F$.

Given a tree $F$ with $r$ vertices
in which each edge has an arbitrary direction,
and for every edge $ij\in F$ a (not necessarily symmetric)
kernel $W\sij\in \cw$, set
\begin{equation}\label{ty}
 \ty\bigpar{F,(W\sij)_{ij\in E(F)}}
\=
\int_{\sss^r} \prod_{ij\in E(F)} W_{ij}(x_i,x_j) 
\dd \mu(x_1)\dots \dd \mu(x_r).
\end{equation}
Note that the exponential factors
$e^{-\glw(x_k)}$ present in \eqref{tx} are missing from \eqref{ty}.

We shall reintroduce the exponential factors by attaching them
to the kernels $W\sij$. Recalling the definitions
of the marginals $\la_W$ and $\la_W'$ in \eqref{glw} and \eqref{glp},
for real $a,b\ge0$ let
\begin{equation}\label{Wabdef}
  \wab(x,y)\=e^{-a\glw(x)} W(x,y) e^{-b\glw'(y)}.
\end{equation}
Finally, let $d_i$ be the (total) degree of vertex $i$ in $F$. Then,
comparing \eqref{tx} and \eqref{ty}, for every symmetric $W:\sss^2\to\ooo$ we have
\begin{equation}\label{txty}
  \tx(F,W)
=
\ty\bigpar{F,(W^{(1/d_i,1/d_j)})\sij}.
\end{equation}

To study $\tx(F,W)$, we shall first study the map $W\mapsto W^{(a,b)}$,
and then study the behaviour of $\ty$ on the restricted set of asymmetric kernels
that arise as images of this map.

\begin{lemma}\label{L1}
For every fixed $a,b\ge0$, the map $W\mapsto\wab$ is Lipschitz continuous
on $\cw$  in the cut norm; more precisely,
\begin{equation*}
\cn{W_1\ab-W_2\ab}\le 7
\cn{W_1-W_2}  
\end{equation*}
for all $W_1,W_2\in\cw$. Also, for every
$W\in\cw$, $\sup_x\gl_{\wab}(x)\le e^{-1}/a$ and $\sup_y\gl'_{\wab}(y)\le e^{-1}/b$.
\end{lemma}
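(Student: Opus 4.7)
The plan is to first establish the marginal sup bounds directly, and then prove the Lipschitz estimate by interpolating linearly between $W_1$ and $W_2$ and integrating the derivative of the resulting family.

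For the sup bound, since $e^{-b\lambda'_W(y)} \le 1$ pointwise,
\begin{equation*}
\lambda_{W^{(a,b)}}(x) = e^{-a\lambda_W(x)}\int_\sss W(x,y) e^{-b\lambda'_W(y)}\dd\mu(y) \le \lambda_W(x) e^{-a\lambda_W(x)} \le \sup_{t\ge 0} t e^{-at} = e^{-1}/a,
\end{equation*}
and the bound for $\lambda'_{W^{(a,b)}}$ follows symmetrically (when $a=0$ we interpret $e^{-1}/0$ as $+\infty$, making the bound vacuous).

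For the Lipschitz bound, set $W_\alpha = (1-\alpha)W_2 + \alpha W_1$ for $\alpha \in [0,1]$, and let $\Delta(x) = \lambda_{W_1}(x) - \lambda_{W_2}(x)$, $\Delta'(y) = \lambda'_{W_1}(y) - \lambda'_{W_2}(y)$, so that $\tfrac{d}{d\alpha}\lambda_{W_\alpha}(x) = \Delta(x)$ and likewise for the other marginal. A product-rule computation then gives
\begin{equation*}
\frac{\partial}{\partial\alpha} W_\alpha^{(a,b)}(x,y) = e^{-a\lambda_{W_\alpha}(x)}(W_1-W_2)(x,y) e^{-b\lambda'_{W_\alpha}(y)} - a\Delta(x) W_\alpha^{(a,b)}(x,y) - b\Delta'(y) W_\alpha^{(a,b)}(x,y).
\end{equation*}
Since $W_1^{(a,b)} - W_2^{(a,b)} = \int_0^1 \partial_\alpha W_\alpha^{(a,b)}\dd\alpha$ and $\cn{\int_0^1 \phi_\alpha\dd\alpha} \le \int_0^1 \cn{\phi_\alpha}\dd\alpha$ by Fubini, the job reduces to bounding the cut norm of each of the three summands above, uniformly in $\alpha$.

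The first summand has cut norm at most $\cn{W_1 - W_2}$ by \eqref{cutnorm2}, since the two exponential factors are pointwise in $[0,1]$. For the second, using \eqref{cutnorm1} and the already-proved sup bound applied to $W_\alpha \in \cw$ (and noting the term vanishes when $a=0$),
\begin{equation*}
\cn{a\Delta(x) W_\alpha^{(a,b)}(x,y)} \le a \int_\sss |\Delta(x)| \lambda_{W_\alpha^{(a,b)}}(x)\dd\mu(x) \le a \cdot (e^{-1}/a) \cdot \on{\Delta} \le (1/e)\cn{W_1 - W_2},
\end{equation*}
where the last inequality is \refL{L0} applied to $W_1 - W_2$. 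The third summand is bounded by $(1/e)\cn{W_1-W_2}$ symmetrically. Summing and integrating over $\alpha$ yields the Lipschitz constant $1 + 2/e < 7$, as claimed.

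I expect the only real obstacle to be bookkeeping rather than anything conceptual: one must justify differentiation under the integral sign in $\alpha$ (straightforward, since all integrands are dominated by integrable functions once $W_1,W_2\in\cw$) and the exchange of $\cn{\cdot}$ with the $\alpha$-integral (a direct consequence of Fubini applied to the defining supremum \eqref{cutnorm}).
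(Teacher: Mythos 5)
Your proof is correct, and it takes a genuinely different route from the paper's. The paper writes $W_1\ab - W_2\ab$ as a single telescoping three-term sum and then splits into four cases according to which of $\gl_1(x),\gl_2(x)$ (and which of $\gl_1'(y),\gl_2'(y)$) is larger, because the mean value theorem bound on $e^{-a\gl_1(x)}-e^{-a\gl_2(x)}$ produces a factor $e^{-a\min(\gl_1(x),\gl_2(x))}$, and the rest of the argument needs the attached $W_i$ to match that minimum; the four decompositions are chosen precisely to make this pairing work out. Your interpolation $W_\alpha=(1-\alpha)W_2+\alpha W_1$ avoids the case analysis entirely: the exponential that appears after differentiating is $e^{-a\gl_{W_\alpha}(x)}$, which is always paired with $W_\alpha$ itself, so the sup bound $\gl_{W_\alpha\ab}(x)\le e^{-1}/a$ (applied to the interpolant, which is legitimately in $\cw$) does the job automatically. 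Your route also yields the sharper constant $1+2/e<2$, better than the paper's stated $4+8/e$ and even than the $1+8/e$ noted in the remark following the lemma. Two small points of hygiene: the integrability needed for Fubini in $\cn{\int_0^1\phi_\alpha\dd\alpha}\le\int_0^1\cn{\phi_\alpha}\dd\alpha$ follows from the very bounds you prove on the three summands, and the pointwise FTC identity $W_1\ab-W_2\ab=\int_0^1\partial_\alpha W_\alpha\ab\dd\alpha$ requires only that $\gl_{W_1}(x),\gl_{W_2}(x),\gl'_{W_1}(y),\gl'_{W_2}(y)$ are all finite, which holds a.e.\ since $W_1,W_2$ are integrable (no genuine differentiation under the integral sign is needed, since $\gl_{W_\alpha}(x)$ is an affine function of $\alpha$).
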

Surprisingly, this turns out to be the hardest part of the proof of \refT{th_FW}. 
\begin{proof}
Let us start with the final inequalities, which are immediate consequences of the inequality
$t e^{-t}\le e\qw$. Indeed,
  \begin{equation*}
	\begin{split}
\gl_{\wab}(x)
&\=\int_\sss\wab(x,y)\dd\mu(y)	
\le \int_\sss
e^{-a\glw(x)}W(x,y)\dd\mu(y)	
\\&
=e^{-a\glw(x)}\glw(x)\le e\qw/a,	  
	\end{split}
  \end{equation*}
and similarly $\gl'_{\wab}(y)\le e^{-1}/b$.

Turning to the main assertion, let $W_1,W_2\in \cw$.
To simplify the notation  set  $\gl_j\=\gl_{W_j}$ and $\gl'_j\=\gl'_{W_j}$
for $j=1,2$.
It will turn out that we have to argue separately according to which
of $\gl_1(x)$ and $\gl_2(x)$ is larger, and similarly for $\gl'_1(y)$ and $\gl'_2(y)$.
Accordingly, define the indicator functions
\begin{align*}
  I_1(x)&\=\ett{\gl_1(x)\le\gl_2(x)}, &
  I_2(x)&\=\ett{\gl_1(x)>\gl_2(x)}, \\
  I'_1(y)&\=\ett{\gl'_1(y)\le\gl'_2(y)}, &
  I'_2(y)&\=\ett{\gl'_1(y)>\gl'_2(y)},
\end{align*}
so $I_1(x)+I_2(x)=I_1'(y)+I_2'(y)=1$.

We may write $W_1\ab-W_2\ab$, a difference of two three-term products,
as a telescopic sum of three terms in the usual way. In particular, we have
\begin{equation}\label{part1}
  \begin{split}
W_1\ab-W_2\ab
&=
\bigpar{e^{-a\gl_1(x)}-e^{-a\gl_2(x)}}e^{-b\gl'_1(y)}W_1(x,y)
\\&\qquad\qquad
+e^{-a\gl_2(x)}\bigpar{e^{-b\gl'_1(y)}-e^{-b\gl'_2(y)}}W_1(x,y)
\\&\qquad\qquad
+e^{-a\gl_2(x)}e^{-b\gl'_2(y)}\bigpar{W_1(x,y)-W_2(x,y)}.
\end{split}
\end{equation}
It will turn out that this decomposition is only useful when $\gl_1(x)\le \gl_2(x)$
and $\gl_1'(y)\le\gl_2'(y)$, so we shall multiply by the indicator
function $I_1(x)I_1'(y)$.

To bound the final term in \eqref{part1},
note that $0\le I_1(x)e^{-a\gl_2(x)}\le 1$ and
$0\le I_1'(y)e^{-a\gl_2'(y)}\le 1$, so
from \eqref{cutnorm2} we have
\begin{equation}\label{term3}
\bignorm{
I_1(x)I_1'(y)e^{-a\gl_2(x)}e^{-b\gl'_2(y)}\bigpar{W_1(x,y)-W_2(x,y)}
}\cut
\le
\cutnorm{W_1-W_2}.  
\end{equation}

For the remaining terms we estimate the $L^1$ norm, recalling
\eqref{cutnorm1}. 
Turning to the first term,
by the mean value theorem, if $\la_1(x)\le \la_2(x)$ then for
some $y\in [\la_1(x),\la_2(x)]$ we have
\[
 e^{-a\la_1(x)}-e^{-a\la_2(x)} = a|\la_1(x)-\la_2(x)|e^{-ay}
 \le a|\la_1(x)-\la_2(x)|e^{-a\la_1(x)},
\]
where $\la_1(x)\le \la_2(x)$
is used in the final inequality. It follows that
\begin{equation*}
  I_1(x)\bigabs{e^{-a\gl_1(x)}-e^{-a\gl_2(x)}}
\le a |\gl_1(x)-\gl_2(x)|e^{-a\gl_1(x)}.
\end{equation*}
Thus,
\begin{equation*}
  \begin{split}
\bignorm{ I_1(x)I_1'(y)
&\bigpar{e^{-a\gl_1(x)}-e^{-a\gl_2(x)}}e^{-b\gl'_1(y)}W_1(x,y)}\liss
\\
&\le
\bignorm{
a\lrabs{\gl_1(x)-\gl_2(x)}e^{-a\gl_1(x)}W_1(x,y)}\liss
\\&
=
\int_{\sss^2} a\lrabs{\gl_1(x)-\gl_2(x)}e^{-a\gl_1(x)}W_1(x,y)\dd\mu(y)\dd\mu(x)
\\&
=
\int_\sss a\lrabs{\gl_1(x)-\gl_2(x)}e^{-a\gl_1(x)}\gl_1(x)\dd\mu(x)
\\&
\le
e\qw\int_\sss \lrabs{\gl_1(x)-\gl_2(x)}\dd\mu(x)	
=e\qw\norm{\gl_1-\gl_2}\lis
\\&
\le e\qw\cutnorm{W_1-W_2},
  \end{split}
\end{equation*}
where we used $te^{-t}\le e^{-1}$ for the second last step and \refL{L0}
for the final step.

Similarly, for the second term in \eqref{part1} we obtain the bound
\[
\bignorm{ I_1(x)I_1'(y) e^{-a\gl_2(x)}\bigpar{e^{-b\gl'_1(y)}-e^{-b\gl'_2(y)}}W_1(x,y)}\liss
\le e\qw\cutnorm{W_1-W_2}.
\]
Putting these two bounds together with \eqref{term3}, comparing with \eqref{part1} we see
that
\begin{equation}\label{nc1}
 \Bignorm{ I_1(x)I_1'(y)\bigpar{W_1\ab(x,y)-W_2\ab(x,y)}}\cut\le (1+2e\qw)\cn{W_1-W_2}.
\end{equation}

So far we treated the case $\la_1(x)\le \la_2(x)$, $\la_1'(y)\le \la_2'(y)$.
The remaining three cases are treated similarly.

More precisely,
for $\la_1(x)\le\la_2(x)$, $\la_1'(y)>\la_2'(y)$, we use
\begin{equation*}
  \begin{split}
W_1\ab-W_2\ab
&=\bigpar{e^{-a\gl_1(x)}-e^{-a\gl_2(x)}}e^{-b\gl'_1(y)}W_1(x,y)
\\&\qquad\qquad
+e^{-a\gl_2(x)}e^{-b\gl'_1(y)}\bigpar{W_1(x,y)-W_2(x,y)}
\\&\qquad\qquad
+e^{-a\gl_2(x)}\bigpar{e^{-b\gl'_1(y)}-e^{-b\gl'_2(y)}}W_2(x,y)
\end{split}
\end{equation*}
in place of \eqref{part1} to prove the equivalent of \eqref{nc1}
with $I_1(x)I_2'(y)$ in place of $I_1(x)I_1'(y)$.

For $\la_1(x)>\la_2(x)$, $\la_1'(y)\le\la_2'(y)$ we use
\begin{equation*}
  \begin{split}
W_1\ab-W_2\ab
&=e^{-a\gl_1(x)}\bigpar{e^{-b\gl'_1(y)}-e^{-b\gl'_2(y)}}W_1(x,y)
\\&\qquad\qquad+
e^{-a\gl_1(x)}e^{-b\gl'_2(y)}\bigpar{W_1(x,y)-W_2(x,y)}
\\&\qquad\qquad+
\bigpar{e^{-a\gl_1(x)}-e^{-a\gl_2(x)}}e^{-b\gl'_2(y)}W_2(x,y)
\end{split}
\end{equation*}
to obtain a bound with $I_2(x)I_1'(y)$ as the indicator function.

Finally, for $\la_1(x)>\la_2(x)$, $\la_1'(y)>\la_2'(y)$ we use
\begin{equation*}
  \begin{split}
W_1\ab-W_2\ab&=
e^{-a\gl_1(x)}e^{-b\gl'_1(y)}\bigpar{W_1(x,y)-W_2(x,y)}
\\&\qquad\qquad+
\bigpar{e^{-a\gl_1(x)}-e^{-a\gl_2(x)}}e^{-b\gl'_1(y)}W_2(x,y)
\\&\qquad\qquad+
e^{-a\gl_2(x)}\bigpar{e^{-b\gl'_1(y)}-e^{-b\gl'_2(y)}}W_2(x,y)
\end{split}
\end{equation*}
for $I_2(x)I_2'(y)$.

The key point is that in all cases, when we
come to apply the bound obtained from the mean value theorem,
when dealing with a term $e^{-a\la_1(x)}-e^{-a\la_2(x)}$ 
we obtain a bound involving $e^{-\la_i(x)}$ for $i=1$ or $2$
depending on which of $\la_1(x)$ and $\la_2(x)$ is larger.
For the rest of the argument to work, it is important that the term
we consider contains a factor $W_i(x,y)$ rather than $W_{3-i}(x,y)$.
Similar comments apply to the  $e^{-b\la_1'(y)}-e^{-b\la_2'(y)}$ terms.
Fortunately, we can ensure that this is always the case, as shown
by the decompositions above. Informally speaking, we simply choose the right moment
to switch from $W_1$ to $W_2$.

Combining \eqref{nc1} and its equivalents, noting that
$I_1(x)I_1'(y)+I_1(x)I_2'(y)+I_2(x)I_1'(y)+I_2(x)I_2'(y)=1$, we see that
\begin{equation*}
\cn{W_1\ab-W_2\ab}\le 
(4+8e\qw)
\cn{W_1-W_2} \le 7\cn{W_1-W_2}.
\qedhere
\end{equation*}
\end{proof}

\begin{remark}
Although we do not care about the constant, let us note that the four
estimates \eqref{term3} above can be combined into a single application
of \eqref{cutnorm2},
with $h(x)=I_1(x)e^{-\la_2(x)}+I_2(x)e^{-\la_1(x)}$
and $k(y)=I_1'(y)e^{-\la_2'(y)}+I_2'(y)e^{-\la_1'(y)}$.
This gives $1+8e\qw<4$ in place of $4+8e\qw$.
\end{remark}

We next turn to the study of $\ty(F,\cdot)$ as defined by \eqref{ty},
restricting our attention to kernels with bounded marginals.
It turns out that we must first study a related function $\tz$,
which may be seen as a rooted version
of $\ty$.

Given a rooted directed graph $F$ with vertex set $\{1,2,\ldots,r\}$
and root 1, and functions $W\sij\in \cw$, let
\begin{equation*}
 \tz\bigpar{F,(W\sij)_{ij\in E(F)};x_1}
\=
\int_{\sss^{r-1}} \prod_{ij\in E(F)} W_{ij}(x_i,x_j) 
\dd \mu(x_2)\dots \dd \mu(x_r).
\end{equation*}
Note that this is a function of $x_1\in\sss$, and that 
\begin{equation}\label{erika}
   \ty\bigpar{F,(W\sij)_{ij\in E(F)}}
=
\int_\sss \tz\bigpar{F,(W\sij)_{ij\in E(F)};x}\dd\mu(x).
\end{equation}

Let $\cwa\=\set{W\in\cw:\sup_x\glw(x),\,\sup_y\glw'(y)\le \BB}$.

\begin{lemma}
  \label{L2root}
Let $F$ be a rooted directed tree and
$(W\sij)_{ij\in E(F)}$ a family with $W\sij\in\cwa$ for all $ij$.
Then for all $x\in\sss$,
\begin{equation*}
  \tz\bigpar{F,(W\sij)_{ij\in E(F)};x} \le \BB^{e(F)}.
\end{equation*}
\end{lemma}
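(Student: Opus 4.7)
The plan is to prove \refL{L2root} by induction on the number of edges $e(F)$, peeling off leaves of the tree.

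For the base case $e(F)=0$, the tree $F$ consists of the root alone, so $\tz\bigpar{F,();x}=1$ (an empty integral over $\sss^{0}$), which is at most $\BB^{0}=1$.

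For the inductive step, assume the bound holds for all rooted directed trees with fewer than $e(F)$ edges. Since $F$ is a tree with $r\ge 2$ vertices, it has a leaf $\ell\neq 1$ (the root). Let $j$ be the unique neighbour of $\ell$ in $F$, so that $E(F)$ contains exactly one edge incident with $\ell$; depending on the orientation, this edge is either $j\ell$ or $\ell j$. Since the integrand is non-negative, Fubini's theorem lets us perform the integration over $x_\ell$ first. In the case the edge is directed from $j$ to $\ell$, this inner integral is
\begin{equation*}
 \int_\sss W_{j\ell}(x_j,x_\ell)\dd\mu(x_\ell) = \gl_{W_{j\ell}}(x_j) \le \BB,
\end{equation*}
by the definition of $\cwa$. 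In the other case, the corresponding integral equals $\gl'_{W_{\ell j}}(x_j)\le \BB$. In either case, the inner integral is bounded pointwise (in $x_j$) by $\BB$.

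Let $F'$ be the rooted directed tree obtained from $F$ by deleting the leaf $\ell$ and its incident edge; $F'$ has $r-1$ vertices, root $1$, and $e(F')=e(F)-1$ edges. Pulling the uniform bound $\BB$ outside the remaining integral over $x_2,\dots,x_{\ell-1},x_{\ell+1},\dots,x_r$, we obtain
\begin{equation*}
 \tz\bigpar{F,(W\sij)_{ij\in E(F)};x_1}
 \le \BB\cdot \tz\bigpar{F',(W\sij)_{ij\in E(F')};x_1}
 \le \BB\cdot \BB^{e(F)-1} = \BB^{e(F)},
\end{equation*}
where the second inequality uses the induction hypothesis applied to $F'$ (whose edge kernels are still in $\cwa$). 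This completes the induction.

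There is no real obstacle here — the argument is a routine peeling of a tree — and the choice of a leaf different from the root is always possible as long as $r\ge 2$, which exactly matches the case $e(F)\ge 1$.
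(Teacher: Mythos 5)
Your proof is correct and follows essentially the same approach as the paper's: induction on $e(F)$, removing a leaf other than the root, integrating out the corresponding variable to produce a marginal bounded by $\BB$, and invoking the inductive hypothesis. The only cosmetic difference is that you treat both orientations of the leaf edge explicitly, where the paper simply says one may assume a convenient orientation without loss of generality.
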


\begin{proof}
A simple induction on the number $e(F)$ of edges of $F$. If $e(F)=0$, so
$F$ consists of just a single vertex,
then both sides are equal to $1$. For $e(F)>0$, pick a leaf $v$ of $F$
that is not the root, with neighbour $w$. 
We may assume without loss of generality that the edge $wv$ is oriented
from $w$ to $v$.
In the integrand appearing in the left hand side above, there is
only one factor that depends on $x_v$, namely $W_{wv}(x_w,x_v)$.
Integrating out over $x_v$,
this integrates to $\gl_{W_{wv}}(x_w)$.
Replacing $\gl_{W_{wv}}(x_w)$ by $\BB$, which is an upper bound by assumption,
we see that
that $\tz(F,\cdot;x) \le \BB\tz(F-v,\cdot;x)$, and the result follows by induction.
\end{proof}

Returning to the unrooted case, we are now ready
for the final step in the proof of \refT{th_FW}.

\begin{lemma}
  \label{L2}
Let $F$ be a directed tree, and $\BB<\infty$ a constant.
For all families $(W\sij)_{ij\in E(F)}$ and 
$(W'\sij)_{ij\in E(F)}$ with $W\sij,W'\sij\in\cwa$, we have
\begin{equation}\label{l2a}
  \ty\bigpar{F,(W\sij)_{ij\in E(F)}} \le \BB^{e(F)}
\end{equation}
and
\begin{equation}\label{l2b}
\bigabs{ \ty\bigpar{F,(W\sij)_{ij\in E(F)}} 
- \ty\bigpar{F,(W'\sij)_{ij\in E(F)}} }
\le \BB^{e(F)-1}\sum_{ij\in E(F)} \cn{W\sij-W'\sij}.
\end{equation}
\end{lemma}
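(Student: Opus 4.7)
The plan is to prove (l2a) as an immediate corollary of \refL{L2root} and (l2b) by a single telescoping over the edges of $F$, where each telescoped term is bounded by isolating the "cut" edge and applying the cut norm while using \refL{L2root} to handle the two resulting rooted half-trees.

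For the upper bound (l2a), fix any vertex of $F$ as the root. Identity \eqref{erika} gives $\ty(F,(W\sij))=\int_\sss \tz(F,(W\sij);x)\dd\mu(x)$, and by \refL{L2root} the integrand is bounded by $\BB^{e(F)}$; integrating over $\sss$ yields (l2a) at once.

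For (l2b), enumerate $E(F)=\{e_1,\dots,e_m\}$ with $m=e(F)$ and use the standard telescoping identity
\begin{equation*}
\prod_{k=1}^m W_{e_k} - \prod_{k=1}^m W'_{e_k}
= \sum_{l=1}^m \Bigpar{\prod_{k<l} W'_{e_k}}\bigpar{W_{e_l}-W'_{e_l}}\Bigpar{\prod_{k>l} W_{e_k}},
\end{equation*}
where each factor $W_{e_k}$ or $W'_{e_k}$ is evaluated on its endpoint variables. I will bound the absolute value of the $l$-th integrated summand by $\BB^{e(F)-1}\cn{W_{e_l}-W'_{e_l}}$; summing over $l$ then gives (l2b). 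Fix $l$ and write $e_l=ij$. Removing $e_l$ from the tree $F$ produces two subtrees $F_i\ni i$ and $F_j\ni j$ with $e(F_i)+e(F_j)=m-1$. Inside the $l$-th integrand the factors attached to edges of $F_i$ depend only on $\{x_v:v\in V(F_i)\}$, and symmetrically for $F_j$, so integrating out every variable except $x_i$ and $x_j$ produces
\begin{equation*}
\phi(x_i)=\tz\bigpar{F_i,(U_e)_{e\in E(F_i)};x_i},\qquad
\psi(x_j)=\tz\bigpar{F_j,(U'_e)_{e\in E(F_j)};x_j},
\end{equation*}
where $F_i,F_j$ are rooted at $i,j$ respectively and each $U_e,U'_e\in\{W_e,W'_e\}\subseteq\cwa$. \refL{L2root} therefore gives $\sn{\phi}\le\BB^{e(F_i)}$ and $\sn{\psi}\le\BB^{e(F_j)}$, and the $l$-th term reduces to
\begin{equation*}
\int_{\sss^2}\phi(x_i)\bigpar{W_{e_l}-W'_{e_l}}(x_i,x_j)\psi(x_j)\dd\mu(x_i)\dd\mu(x_j),
\end{equation*}
whose absolute value is at most $\sn{\phi}\sn{\psi}\cn{W_{e_l}-W'_{e_l}}\le\BB^{e(F)-1}\cn{W_{e_l}-W'_{e_l}}$ by the definition \eqref{cutnorm} of the cut norm.

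The only point requiring attention is that the auxiliary integrals $\phi$ and $\psi$ produced by the telescoping are "hybrid" tree integrals with kernels drawn freely from $\{W_e,W'_e\}$; this is precisely the generality in which \refL{L2root} was phrased, so the bound $\BB^{e(F_i)+e(F_j)}=\BB^{e(F)-1}$ is uniform in $l$ and in the particular hybrid. Once this is observed, the factorization of the integrand along the tree and a single application of \eqref{cutnorm} per edge close the argument.
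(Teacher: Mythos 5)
Your proposal is correct and follows essentially the same route as the paper: both prove \eqref{l2a} by integrating the bound from \refL{L2root}, and both prove \eqref{l2b} by telescoping edge by edge (the paper phrases this as a reduction to the single-edge case via the triangle inequality, which is exactly your explicit telescoping with hybrid families), then isolating the changed edge, applying \refL{L2root} to the two rooted subtrees, and finishing with the cut-norm bound \eqref{cutnorm}. You correctly identify that the only subtlety is that \refL{L2root} must apply to mixed families of kernels from $\cwa$, which it does.
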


\begin{proof}
The bound \eqref{l2a} is immediate from \eqref{erika} and \refL{L2root} by
choosing an arbitrary root.  

For the Lipschitz estimate \eqref{l2b}, it suffices to treat the case
where the families $W\sij$ and $W'\sij$ differ only on a single
edge $ij$, say $ij=12$. In
this case, let $F_1$ and $F_2$ be the two components of
$F\setminus\set{12}$, and regard these as rooted trees with roots 1
and 2, respectively. Then, simplifying the notation,
\begin{equation*}
   \ty\bigpar{F,(W\sij)_{ij}}
=\int_{\sss^2} \tz(F_1;x_1)\tz(F_2;x_2) W_{12}(x_1,x_2)
\dd\mu(x_1)\dd\mu(x_2)
\end{equation*}
and similarly for $(W'\sij)$. Thus, by \eqref{cutnorm},
	\begin{multline*}
\bigabs{ \ty\bigpar{F,(W\sij)_{ij}}
-
 \ty\bigpar{F,(W'\sij)_{ij}}}
\\=
\lrabs{
\int_{\sss^2} \tz(F_1;x_1)\tz(F_2;x_2) 
\bigpar{W_{12}(x_1,x_2)-W'_{12}(x_1,x_2)}
\dd\mu(x_1)\dd\mu(x_2)}
\\
\le \sn{\tz(F_1)}\sn{\tz(F_2)}\cutnorm{W_{12}-W'_{12}}.
\end{multline*}
The result follows by \refL{L2root}.
\end{proof}

Putting the pieces together, \refT{th_FW} follows.
\begin{proof}[Proof of \refT{th_FW}]
In the light of \eqref{txty}, this is immediate from Lemmas~\ref{L1} and~\ref{L2}.
\end{proof}

\subsection{Small components}\label{ss_s}

Let $N_k(G)$ denote the number of vertices of a graph $G$ in components
of order $k$, and let $\rho_k(\ka)$ denote the probability that $\bpk$
consists of exactly $k$ particles in total. Our next aim is to prove
the following lemma. Recall that $A_n$ is always assumed to be $n$-by-$n$.

\begin{lemma}\label{l_Nk}
Let $(A_n)$ be a sequence of non-negative symmetric matrices converging
in $\dcut$ to a kernel $\ka$, and let $k\ge 1$ be fixed.
Then $\E N_k(G(A_n))/n \to \rho_k(\ka)$.
\end{lemma}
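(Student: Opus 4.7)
The plan is to express both $\rho_k(\ka)$ and $\E N_k(G(A_n))/n$, the latter up to a vanishing error, as the same finite linear combination of the tree integrals $\tx(T,\cdot)$ from \refT{th_FW}, with $T$ ranging over the unlabelled trees on $k$ vertices. The convergence then follows from the cut-norm Lipschitz continuity established in that theorem, together with the invariance of $\tx(T,\cdot)$ under measure-preserving rearrangements (which is immediate from its definition).

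First I would truncate: by \refL{l_on} there is some $M(n)=o(n)$ such that the matrix $A_n'$ obtained by clipping every entry of $A_n$ at $M(n)$ still satisfies $\dcut(A_n',\ka)\to 0$, and differs from $A_n$ only in $o(n)$ entries whose total weight is $o(n^2)$. In the obvious monotone coupling, $G(A_n)$ and $G(A_n')$ disagree in $o(n)$ edges in expectation; since adding or removing a single edge changes $N_k$ by at most $3k$, we get $|\E N_k(G(A_n))-\E N_k(G(A_n'))|=o(n)$. Having reduced to $A_n'$, all edge probabilities are now $o(1)$ uniformly, so by a standard Poisson approximation (cf.\ \eqref{convert}--\eqref{cve}) we may replace $G(A_n')$ by $\Gp(A_n')$ at negligible cost; this simplifies the subsequent computation since in the Poisson model, ``no edge $ij$'' has probability exactly $e^{-a'_{ij}/n}$.

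Next I would compute $\E N_k(\Gp(A_n'))$ by decomposing on the isomorphism type of the component containing each vertex. Writing $\la(v)=n^{-1}\sum_w a'_{vw}$, the probability that a given $k$-subset $S=\{v_1,\dots,v_k\}$ is a component whose edge set is the tree $T$ factors exactly as a product over $T$-edges, over non-$T$ pairs inside $S$, and over pairs from $S$ to outside; using uniform smallness of the entries this simplifies to $(1+o(1))\prod_{ij\in T}(a'_{ij}/n)\prod_i e^{-\la(v_i)}$. Summing over subsets $S$ and tree isomorphism types $T^*$ with the standard labelled/unlabelled bookkeeping yields
\[
 \E N_k(\Gp(A_n'))/n = k(1+o(1))\sum_{T^*}\frac{1}{|\Aut(T^*)|}\tx(T^*,\ka_{A_n'}) + o(1),
\]
where the summand reproduces $\tx$ after replacing the sum over injective $\phi\colon[k]\hookrightarrow[n]$ by the sum over all $\phi\colon[k]\to[n]$ implicit in $\tx$. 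The final $o(1)$ bundles two small errors: (a) the contribution of connected non-tree subgraphs $H$ on $k$ vertices, which after picking a spanning tree $T''\subset H$ and using the uniform bound $\tx(T'',\cdot)\le C$ from \refT{th_FW} is at most $O(n)(M(n)/n)^{e(H)-k+1}=o(n)$; and (b) the contribution of non-injective tuples $\phi$, which after contracting a coincident pair of vertices reduces to the same spanning-tree estimate with one factor of $M(n)/n$ extracted, again yielding $o(n)$ after division by $n$.

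For the branching process I would carry out the analogous computation: starting from a particle of type $x_1$, the probability density that $\bpk$ has exactly $k$ particles with genealogical tree of a prescribed rooted shape and types $x_1,\dots,x_k$ equals $\prod_i e^{-\la_\ka(x_i)}\prod_{ij\in T}\ka(x_i,x_j)$, coming from the Poisson offspring density at each particle. Summing over rooted shapes, dividing by the order of the automorphism group (equivalently, using that the children of a given particle are unordered), and sanity-checking for $k\le 3$, gives
\[
 \rho_k(\ka)=k\sum_{T^*}\frac{1}{|\Aut(T^*)|}\tx(T^*,\ka).
\]
\refT{th_FW} then yields $\tx(T^*,\ka_{A_n'})\to\tx(T^*,\ka)$ for each of the finitely many trees $T^*$ on $k$ vertices, which finishes the proof. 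The main obstacle is the combinatorial bookkeeping needed to match the coefficients on the two sides; I would handle this by parameterising both sums by ordered injective tuples $\phi\colon[k]\hookrightarrow\sss$ equipped with a labelled tree structure, so that by construction the same numerical coefficient $1/|\Aut(T^*)|$ appears on both sides.
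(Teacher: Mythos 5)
Your proposal is correct and follows essentially the same strategy as the paper: truncate large entries via \refL{l_on}, pass to the Poisson model, express the expected number of vertices in tree components as $kn\sum_T\tx(T,\ka_{A_n'})/\aut(T)$ up to negligible corrections, match this with the branching-process identity $\rho_k(\ka)=k\sum_T\tx(T,\ka)/\aut(T)$, and invoke the cut-norm Lipschitz continuity of \refT{th_FW} to pass to the limit. The one genuine technical difference is in how you dispose of the negligible terms: you bound both cyclic components and non-injective tuples uniformly by extracting factors of $M(n)/n=o(1)$ and applying the uniform bound $\tx(T'',\cdot)\le C$ from \refT{th_FW} to a spanning tree, whereas the paper isolates the cyclic-component bound into a separate evolving-graph lemma (\refL{l_Nkp}) which needs only well-behavedness of $(A_n)$ and no convergence hypothesis; both routes are valid, yours being slightly more unified, the paper's isolating a reusable sub-lemma.
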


As usual in sparse random graphs, the dominant contribution
will be from tree components.
We start with a simple lemma showing that
cyclic components can be neglected.

Let us call a sequence $(A_n)$ of non-negative symmetric matrices 
(in which $A_n$ is $n$-by-$n$ as usual) {\em well behaved}
if all the diagonal entries are zero, and
$\max A_n=o(n)$, where $\max A_n$ is the largest entry in $A_n$.
One useful property of such sequences is that for them, the models $G(A_n)$
and $\Gp(A_n)$ are essentially equivalent, as shown by the following simple lemma.
\begin{lemma}\label{GGp}
Let $\ka$ be a kernel and let $(A_n)$ be a sequence of well-behaved matrices with $\dcut(A_n,\ka)\to 0$.
Let $A_n'$ be the matrix with entries defined by \eqref{cve}.
Then $\dcut(A_n',\ka)\to 0$.
\end{lemma}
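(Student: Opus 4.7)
The plan is to reduce cut convergence of $A_n'$ to $\ka$ to the triangle inequality $\dcut(A_n',\ka) \le \dcut(A_n',A_n) + \dcut(A_n,\ka)$, so that the second term vanishes by hypothesis and it remains only to show $\dcut(A_n',A_n) \to 0$. Since $A_n$ and $A_n'$ live on the same vertex set, the identity coupling/rearrangement gives $\dcut(A_n',A_n) \le \on{\ka_{A_n'}-\ka_{A_n}} = \frac{1}{n^2}\sum_{i,j}(a_{ij}'-a_{ij})$, so the problem reduces to an integrated $L^1$-type bound on the difference of entries.

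The main analytic input is the Taylor expansion \eqref{cve}. Since $(A_n)$ is well behaved, $\max A_n = o(n)$, so for all sufficiently large $n$ we have $a_{ij}/n \le 1/2$ for every $i,j$. A direct estimate of the tail of the series $-\log(1-x) = x + x^2/2 + x^3/3 + \cdots$ on $[0,1/2]$ then gives a clean bound of the form $a_{ij}' - a_{ij} \le a_{ij}^2/n$ (any explicit constant will do). Summing,
\[
 \on{\ka_{A_n'}-\ka_{A_n}} \;\le\; \frac{1}{n^3}\sum_{i,j} a_{ij}^2 \;\le\; \frac{\max A_n}{n^3} \sum_{i,j} a_{ij}.
\]

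What remains is to show the right-hand side is $o(1)$. Here I would use that $\dcut(A_n,\ka)\to 0$ implies $\int \ka_{A_n} \to \int \ka < \infty$ (via the trivial bound $|\int W| \le \cn{W}$ noted earlier in the paper), so $\sum_{i,j}a_{ij} = n^2 \int \ka_{A_n} = O(n^2)$. Combined with $\max A_n = o(n)$ this gives $\sum_{i,j}a_{ij}^2 = o(n^3)$, hence $\on{\ka_{A_n'}-\ka_{A_n}} = o(1)$ and we are done. The diagonal causes no trouble since $a_{ii}=0$ forces $a_{ii}'=0$.

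There is no real obstacle here: the argument is essentially a one-line $L^1$ estimate. The only point to watch is that one cannot invoke Lemma~\ref{l_on} (which would itself deliver a weaker $o(n)$ bound on large entries), since the assumption that $(A_n)$ is well behaved already includes the stronger conclusion $\max A_n = o(n)$; the only nontrivial input beyond this is the crude total-mass bound $\sum a_{ij} = O(n^2)$ coming from cut convergence to an integrable kernel.
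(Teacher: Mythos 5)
Your proof is correct and takes essentially the same route as the paper: the triangle inequality reduces everything to $\on{\ka_{A_n'}-\ka_{A_n}}\to 0$, the Taylor bound $|a_{ij}'-a_{ij}|=O(a_{ij}^2/n)$ is applied, and the combination of $\max A_n=o(n)$ (from well-behavedness) with $\sum_{ij} a_{ij}=O(n^2)$ (from cut convergence to an integrable kernel) finishes the argument. The paper phrases the last step slightly differently (pulling out $\max\{a_{ij}/n\}=o(1)$ directly rather than bounding $\sum a_{ij}^2$), but the estimate is identical.
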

\begin{proof}
For $n$ large enough that $\max a_{ij}\le n/2$, say, from \eqref{cve} we have
$|a_{ij}-a_{ij}'| =O(a_{ij}^2/n)$, with the implicit constant $C$ absolute.
It follows that
\[
 \sum_{ij} |a_{ij}-a_{ij}'| \le C \sum_{ij} a_{ij}^2/n \le C\max\{a_{ij}/n\} \sum_{ij} a_{ij}
 =o(1)\sum_{ij} a_{ij},
\]
using the well-behavedness assumption. Since $\dcut(A_n,\ka)\to 0$, we have
$\sum a_{ij}\sim n^2\int\ka=O(n^2)$.
Hence
\[
 \dcut(\ka_{A_n},\ka_{A_n'}) \le \on{\ka_{A_n}-\ka_{A_n'}} = n^{-2}\sum_{ij}|a_{ij}-a_{ij}'|
 = o(1),
\]
and the result follows.
\end{proof}
The point of Lemma~\ref{GGp} is that if we can prove that $\Gp(A_n)$ has a certain property
whenever $\dcut(A_n,\ka)\to 0$, then the same result for $G(A_n)$ follows: we simply
express $G(A_n)$ as $\Gp(A_n')$ as in \eqref{convert},
and apply our result for $\Gp(\cdot)$ to the sequence
$(A_n')$.

Our next lemma shows that the graphs we consider have few vertices in small components
containing cycles.
Let $\nkt(G)$ denote the
number of vertices of a graph $G$ in tree components of order $k$,
and $\nkc(G)$ the number in cyclic components 
of order $k$, so $N_k(G)=\nkt(G)+\nkc(G)$.

\begin{lemma}\label{l_Nkp}
Let $(A_n)$ be a sequence of well-behaved matrices and $k\ge 2$
an integer. Then $\E \nkc(G_n) =o(n)$, where $G_n=\Gpm(A_n)$.
\end{lemma}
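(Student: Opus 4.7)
Proof plan. Since $\nkc(G_n) = k \cdot (\text{number of cyclic components of order } k)$, it suffices to show that the expected number of cyclic components of order $k$ in $G_n = \Gpm(A_n)$ is $o(n)$. Every such component contains, as a submultigraph, a ``cyclic skeleton'' $H$: a connected multigraph on its $k$ vertices with exactly $k$ edges counted with multiplicity. (Such an $H$ is either a simple unicyclic graph, or a spanning tree with one edge of multiplicity $2$; loops are excluded by the well-behaved hypothesis.) Since there are only finitely many shapes of such skeletons on $k$ labeled vertices, it is enough to bound the expected number of isolated occurrences of each shape.

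Fix a shape $H$ and an ordered tuple $\phi=(v_1,\dots,v_k)$ of distinct vertices. By Poisson independence between edges inside $\phi([k])$ and edges from this set to its complement, together with the factorial-moment bound $\Pr[\Po(\lambda)\ge m]\le\lambda^m/m!$,
\[
\Pr\bigl[H\subseteq G_n \text{ via } \phi \text{ and } \{v_i\} \text{ isolated}\bigr] \le C_k\prod_{ij}(a_{v_iv_j}/n)^{m_{ij}}\prod_v e^{-D_v/n},
\]
where $D_v:=\sum_w a_{vw}$, $m_{ij}$ is the multiplicity of $ij$ in $H$, and the constant $C_k$ absorbs an $\exp(O(k^2\max a/n))=1+o(1)$ overcount for the few edges inside $\phi([k])$ that were double-counted in the isolation exponent; this is legitimate because $\max a = o(n)$.

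The main step is bounding $\sum_\phi$ of the right-hand side. I will distribute each factor $e^{-D_v/n}$ evenly over the $d_v^H$ edge-incidences of $v$ in $H$ and set
\[
\tilde a_{ij} := a_{ij}\exp\Bigl(-\tfrac{D_i}{d_i^H n}-\tfrac{D_j}{d_j^H n}\Bigr),
\]
so that $\prod_{ij} a_{ij}^{m_{ij}}\prod_v e^{-D_v/n}=\prod_{ij}\tilde a_{ij}^{m_{ij}}$. The discrete counterpart of \refL{L1} then yields, uniformly in $v$,
\[
\sum_u\tilde a_{vu}\le D_v\, e^{-D_v/(d_v^H n)}\le d_v^H n/e \le kn/e,
\]
using $ue^{-u}\le 1/e$. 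Writing $H=T^*\cup\{e^*\}$ with $T^*$ a spanning tree and $e^*$ the ``excess'' edge (cycle-creating or a duplicated tree edge), I bound the $e^*$-contribution by $\tilde a_{e^*}\le\max a$ and iteratively remove leaves of $T^*$: each of the $k-1$ leaf summations contributes at most $kn/e$, and the final free sum over the root contributes $n$, giving $\sum_\phi\prod_{ij\in T^*}\tilde a_{\phi(i)\phi(j)} \le n(kn/e)^{k-1} = O(n^k)$.

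Combining the estimates and summing over the finitely many shapes of $H$,
\[
\E\nkc(G_n) \le O(1)\cdot n^{-k}\cdot\max a\cdot O(n^k) = O(\max a)=o(n),
\]
as required. The principal obstacle is the tree-sum bound: naive bounds on $\sum_\phi\prod a$ (over injective $\phi$) are too large for unbounded kernels, so the exponential isolation factors are essential -- they cap the effective row sums of the modified matrix at $O(n)$. This step is the discrete counterpart of the $W\mapsto \wab$ construction of \refSS{ss_ti}, and only the well-behaved hypothesis $\max a=o(n)$ is used; no $L^1$ bound on, or convergence of, $(A_n)$ is required.
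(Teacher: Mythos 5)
Your proof is correct, but it takes a genuinely different route from the paper's. The paper proves \refL{l_Nkp} via a continuous-time evolution argument: it builds an increasing process $G_n(t)$, $t\in[0,1]$, by attaching independent Poisson birth-time processes of intensity $a_{ij}/n$ to each pair, and bounds the \emph{rate} of increase of $\E M_{\le k}(G_n(t))$. The key observation is that adding an edge can increase $M_{\le k}$ only if that edge lies inside a component of order $\le k$; there are at most $kn$ such ordered pairs, each arriving at rate $a_{ij}/n\le \max A_n/n = o(1)$, so the derivative is $o(n)$, and integrating over $t\in[0,1]$ gives the result in a couple of lines. Your approach is instead a static first-moment count: you enumerate cyclic skeletons (spanning tree plus one excess edge), bound the probability of an isolated embedding with the factorial-moment inequality, redistribute the isolation exponentials $e^{-D_v/n}$ over the edge-incidences of $H$ to cap row sums of the modified matrix at $O(n)$, and then peel leaves of the spanning tree. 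This is the discrete analogue of the $W\mapsto W^{(a,b)}$ trick of \refSS{ss_ti}, and it is the right idea for making the tree sum work without any $L^1$ or boundedness hypothesis on $(A_n)$ — a naive bound fails precisely because the row sums $D_v$ may be as large as $n\max A_n$.

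Both proofs use only $\max A_n = o(n)$ and zero diagonals, and both are correct. The paper's rate argument is shorter and avoids the combinatorial bookkeeping over skeleton shapes and the re-weighting. Your count is longer but is closer in spirit to the proof of \refL{l_Nk} in the same subsection (which also handles sums over injective embeddings via the factors $e^{-\la_n(v_i)}$), so it makes the two lemmas feel more of a piece; it also directly yields the sharper quantitative bound $\E\nkc(G_n)=O(\max A_n)$. One very minor point of presentation: your $\tilde a_{ij}$ depends on the $H$-degrees $d_i^H, d_j^H$ as well as on the target vertices $\phi(i),\phi(j)$, so it is really a per-edge-slot quantity rather than an entry of an $n\times n$ matrix; this does not affect the estimate, but it is worth saying so the summation-by-leaves step reads unambiguously.
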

Note that in this lemma there is no convergence assumption. Note also that \refL{l_Nkp}
immediately implies a corresponding result for $\Gp(A_n)$, which is simply
the simple graph underlying $\Gpm(A_n)$, and so satisfies
$\nkc(\Gp(A_n))\le \nkc(\Gpm(A_n))$.
It also implies a corresponding result for $G(A_n)$; this may be
deduced from the result for $\Gp(A_n)$ by expressing $G(A_n)$ as $\Gp(A_n')$ as above.
\begin{proof}
We shall consider an evolving version $G_n(t)$ of $G_n$.
To define this, for each possible edge $ij$, construct a Poisson process on $[0,1]$
with intensity $a_{ij}/n$; the points of these processes will
be the birth times of the $ij$ edges. Let $G_n(t)$ be the graph
formed by all edges born by time $t$, noting that
the number of $ij$ edges in $G_n(1)$ is Poisson with mean
$a_{ij}/n$. Taking the processes independent,
$G_n(1)$ thus has the distribution of $G_n=\Gpm(A_n)$.

Let $M_{\le k}(G)$ denote
the number of cyclic components of a (multi-)graph $G$ of order at most $k$; thus
$\nkc(G)\le k M_{\le k}(G)$.

Let $f(t)$ denote the expectation of $M_{\le k}(G_n(t))$; then $f(0)=0$
and $f(1)=\E M_{\le k}(G_n)$, so $\E \nkc(G_n)\le  kf(1)$,
and it suffices to show that the derivative of $f$ is bounded above by $o(n)$.
Condition on $G_n(t)$, and consider the edges born in a short
time interval $[t,t+\dt]$. Taking $\dt$ small enough, the probability
that there is more than one such edge in any interval $[t,t+\dt]$
is negligible.
The only way we can have $M_{\le k}(G+e)\ge M_{\le k}(G)$ is
if $e$ joins two vertices $i$, $j$ in some component of $G$
of order at most $k$. There are at most $kn$ such pairs
of vertices. Since the $a_{ij}$ are
uniformly bounded by $o(n)$,
the probability $a_{ij}\dt/n$ of adding $e=ij$
is $o(\dt)$, and the probability of adding some such edge is $o(kn\dt)=o(n\dt)$.
Adding such an edge increases $M_{\le k}$ by at most $1$,
so the expected increase in time $\dt$ is at most $o(n\dt)$
as required.
\end{proof}

We are now ready to prove \refL{l_Nk}.

\begin{proof}[Proof of \refL{l_Nk}.]
We claim that it suffices to prove the lemma under the assumption
that $(A_n)$ is well behaved, i.e., $\max A_n=o(n)$, and
the diagonal entries are 0.

To see this, note that by Lemma~\ref{l_on} there is some $\delta=\delta(n)\to 0$ such that
at most $\delta n$ entries of $A_n$ exceed $\delta n$, and the sum
of these entries is at most $\delta n^2$.
Define $A_n'=(a_{ij}')$ by setting $a_{ij}'=0$ if $a_{ij}>\delta n$ or if
$i=j$, and setting $a_{ij}'=a_{ij}$ otherwise.
Then
\[
 \dcut(A_n,A_n') \le \frac{1}{n^2}\sum |a_{ij}-a_{ij}'|
 = \frac{1}{n^2} \sum_{a_{ij}>\delta n} a_{ij} +\frac{1}{n^2}\sum_{i:a_{ii}\le \delta n} a_{ii}
 \le \delta+\delta =o(1).
\]
Hence $\dcut(A_n',\ka) \to 0$, so the sequence $A_n'$ and kernel
$\ka$ satisfy the assumptions of the lemma, and $(A_n')$ is well behaved.
In establishing our claim we may thus
assume that 
\begin{equation}\label{n'}
 \E N_k(G(A_n'))/n\to \rho_k(\ka).
\end{equation}

But then the same result for $G(A_n)$ follows almost immediately. 
Indeed, we may assume that $G(A_n')\subset G(A_n)$, and we have
\[
 \E\bigpar{E(G(A_n))\setminus E(G(A_n'))} = \E\bigpar{e(G(A_n))-e(G(A_n'))}
 \le \frac{1}{n}\sum |a_{ij}-a_{ij}'| = o(n).
\]
Since adding an edge to a graph $G$ changes $N_k(G)$ by at most $2k$, it follows
that
\[
 \E | N_k(G(A_n))-N_k(G(A_n')) | =o(n),
\]
which with \eqref{n'} proves the same statement for $A_n$, establishing the claim.

From now on we suppose as we may that $(A_n)$ is well behaved. 
In the light of \refL{GGp} we may work with $\Gp(A_n)$ instead
of $G(A_n)$. In fact, we shall work with $G_n=\Gpm(A_n)$, which has
exactly the same component structure as $\Gp(A_n)$.

Given a loopless
multi-graph $F$ on $[k]$ and a sequence 
$\bv=(v_1,\ldots,v_k)$ with $1\le v_i\le n$
for each $i$,
set
\begin{equation}\label{p}
 p_{\bv}(F)=p_{\bv}(F,A_n) = 
  \prod_{ij\in E(F)} \frac{a_{v_iv_j}}{n} \prod_{uw: \{u,w\}\cap \{v_i\}\ne \emptyset} e^{-a_{uw}/n},
\end{equation}
where the second product is over all edges $uw$ of the complete
graph on $[n]$ meeting $\{v_1,\ldots,v_k\}$.

Let us call a sequence $\bv=(v_1,\ldots,v_k)$ {\em good} if the $v_i$ 
are distinct, and {\em bad} otherwise.
If $F$ is a simple graph and $\bv$ is good, then $p_{\bv}(F)$ is
the probability
that the vertices $v_1,\ldots,v_k$ of
$G_n=\Gpm(A_n)$
form a component isomorphic to $F$, with the $i$th vertex of $F$ mapped to $v_i$.
Hence, writing $n_F(G_n)$ for the number of components
of $G_n$ isomorphic to $F$, for simple $F$ we have
\[
 \E n_F(G_n) = \frac{1}{\aut(F)}\sum_{\bv\mathrm{\ good}} p_{\bv}(F).
\]
Our aim is to relate this sum with $F$ a tree to $\tx(F,\ka_{A_n})$, and hence to $\tx(F,\ka)$.

Let $\gl_\ka(x)$ denote the marginal of $\ka$, defined by \eqref{glw}.
For $1\le i\le n$, set
\[
 \gl_n(i) = \frac{1}{n} \sum_j a_{ij},
\]
so $\gl_n$ is essentially the marginal of $\ka_{A_n}$. (More precisely,
$\gl_n(i)$ gives the value of the marginal of $\ka_{A_n}$ at any point
of the interval of length $1/n$ corresponding to vertex $i\in [n]$.)

Given a multi-graph $F$ and a (not necessarily good) sequence $\bv$, let
\begin{equation}\label{p0}
 p^0_{\bv}(F)=p^0_{\bv}(F,A_n) = 
  \prod_{ij\in E(F)} \frac{a_{v_iv_j}}{n} \prod_{i=1}^k e^{-\la_n(v_i)}.
\end{equation}
Expanding each term $\la_n(v_i)$ and then
comparing \eqref{p} and \eqref{p0}, we see that if $\bv$ is good
then the only difference
is that certain factors $\exp(-a_{uw}/n)$ appear twice in \eqref{p0}
and only once in \eqref{p}, namely such factors with $u,w\in \{v_1,\ldots,v_k\}$.
Since there are $\binom{k}{2}=O(1)$ such factors and each is (by our
well-behavedness assumption) $1+o(1)$, we have
\begin{equation}\label{pp0}
 p_{\bv}(F) \sim p^0_{\bv}(F)
\end{equation}
uniformly in good sequences $\bv$. Hence, for simple $F$,
\begin{equation}\label{Fcount}
 \E n_F(G_n) \sim \frac{1}{\aut(F)}\sum_{\bv\mathrm{\ good}} p^0_{\bv}(F).
\end{equation}
Specializing now to the case of a tree $T$ on $[k]$,
recalling \eqref{tx} we have
\[
 \tx(T,\ka_{A_n}) = n^{-k} \sum_{\bv} \prod_{ij\in E(T)} a_{v_iv_j} \prod_{i=1}^k e^{-\la_n(v_i)},
\]
so
\[
 \sum_{\bv} p^0_{\bv}(T) = n \tx(T,\ka_{A_n}).
\]

Our next aim is to show that
\begin{equation}\label{badsum}
 \sum_{\bv\mathrm{\ bad}} p^0_{\bv}(T) =o(n).
\end{equation}
Once we have done so, it follows from the formulae above that
\begin{equation}\label{nkt}
 \E n_T(G_n) = o(n) + (1+o(1))n \frac{\tx(T,\ka_{A_n})}{\aut(T)}.
\end{equation}

In any sequence $\bv$ contributing to \eqref{badsum}, at least one pair $v_i$, $v_j$
coincides. Since $a_{ii}=0$ for every $i$, we may assume that if $ij\in E(T)$,
then $v_i\ne v_j$. Let us fix a {\em pattern} of coincidences, i.e.,
decide for which pairs $\{i,j\}$ we have $v_i=v_j$.
The contribution to \eqref{badsum} from a given pattern may be bounded by
\begin{equation}\label{bbound}
 X(F) = \sum_{\bw\mathrm{\ good}} p^0_{\bw}(F),
\end{equation}
where $F$ is the multi-graph formed from $T$ by identifying the appropriate vertices,
and $w_1,\ldots,w_s$ runs over the distinct vertices among $v_1,\ldots,v_r$.
Indeed, the only difference is that in the contribution to \eqref{badsum}
we have factors $e^{-d_i\la_n(w_i)}$ rather than $e^{-\la_n(w_i)}$ in \eqref{bbound},
where $d_i\ge 1$ is the number of the $v_j$ that are mapped to $w_i$.

Note that $F$ is connected.
If $F$ is simple, then using \eqref{Fcount} again we have
\[
 X(F)\sim \aut(F) \E n_F(G_n) =O(n),
\]
since $n_F(G_n)\le n$. Moreover, if $F$ is simple and not a tree,
then by \refL{l_Nkp} we have $X(F)=o(n)$.

If $F$ is not simple, let $F'$ be the underlying simple graph.
Then the terms of the sums defining $F'$ and $F$ are in one-to-one correspondence,
and each term for $F'$ is the term for $F$ multiplied by
$e(F)-e(F')\ge 1$ factors of the form $a_{ij}/n$. Each such factor is $o(1)$,
so we have $X(F)=o(X(F'))$. We have just seen that $X(F')=O(n)$ for any
connected simple $F'$, so if $F$ is not simple we have $X(F)=o(n)$.

Recall that we could write the sum in \eqref{badsum}
as a sum of over $O(1)$ patterns of terms
each bounded by $X(F)$ for some graph $F$ arising from identifying some sets
of non-adjacent vertices
of $T$. Any such graph contains either a cycle or one or more multiple edges, so $X(F)=o(n)$
in all cases, establishing \eqref{badsum}. As noted above, \eqref{nkt} follows.

Recall that $\dcut(\ka_{A_n},\ka)\to 0$. By \refT{th_FW} we thus have
$\tx(T,\ka_{A_n})\to \tx(T,\ka)<\infty$, so 
\begin{equation}\label{enp}
 \E n_T(G_n) =n \tx(T,\ka)/\aut(T) +o(n).
\end{equation}

Let $\bpk\isom T$ denote the event that the branching process $\bpk$ when viewed
as a tree is isomorphic to $T$ (which implies that it has total size $k$).
We claim that
\begin{equation}\label{xk=t}
 \Pr(\bpk\isom T) = \frac{k}{\aut(T)}\tx(T,\ka).  
\end{equation}
In fact, the version of \eqref{xk=t} for a \emph{rooted} tree $T$,
which is the same except that the factor $k$ is omitted,
is easily proved using induction on $k$
(see~\cite{QRperc}),
and then \eqref{xk=t} follows
easily by summing over the different rootings of $T$. 

Hence, summing over all isomorphism types of trees on $k$ vertices,
\[
 \rho_k(\ka) = k \sum_T \frac{\tx(T,\ka)}{\aut(T)},
\]
and from \eqref{enp},
\[
 \E \nkt(G_n) =  \E\left(k\sum_T n_T(G_n)\right)
 = kn\sum_T \frac{\tx(T,\ka)}{\aut(T)} +o(n)= \rho_k(\ka)n +o(n).
\]
Since $\E \nkc(G_n)=o(n)$ by \refL{l_Nkp}, it follows
that $\E N_k(G_n)=\rho_k(\ka)n+o(n)$ as required, where $G_n=\Gpm(A_n)$.
Since $\Gpm(A_n)$ and $\Gp(A_n)$ have the same components, the corresponding
statement for $\Gp(A_n)$ follows immediately, so 
we have proved a version of Lemma~\ref{l_Nk} for the model $\Gp(\cdot)$.
As noted earlier, by Lemma~\ref{GGp}, Lemma~\ref{l_Nk} follows.
\end{proof}

\begin{lemma}\label{l_Nk2}
Let $(A_n)$ be a sequence of non-negative symmetric matrices converging
in $\dcut$ to a kernel $\ka$, and let $k\ge 1$ be fixed.
Then $N_k(G(A_n))/n \pto \rho_k(\ka)$.
\end{lemma}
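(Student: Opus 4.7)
The plan is to combine Lemma \ref{l_Nk} (convergence of the first moment) with a variance bound and conclude by Chebyshev. Concretely, I will show $\Var(N_k(G(A_n))) = O(n)$, with implicit constant depending only on $k$ and $\int\ka$; then $\Var(N_k(G(A_n))/n) = O(1/n) \to 0$, and combined with Lemma \ref{l_Nk} this gives $N_k(G(A_n))/n \pto \rhok(\ka)$.

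To bound the variance, I will apply the Efron--Stein inequality, viewing $N_k(G(A_n))$ as a function of the independent edge indicators $Z_{ij} \sim \Be(p_{ij})$, $p_{ij} = \min\{a_{ij}/n,1\}$. The crucial deterministic observation is that flipping any single $Z_{ij}$ changes $N_k$ by at most $2k$. Indeed, adding an edge either leaves the component partition unchanged or merges two components of sizes $s_1,s_2$ into a single one of size $s_1+s_2$, producing a change
\[
(s_1+s_2)\ett{s_1+s_2=k} - s_1\ett{s_1=k} - s_2\ett{s_2=k}
\]
of absolute value at most $2k$; removing an edge either does nothing (if it lies in a cycle) or splits a component into two pieces, with an analogous bound. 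With $N_k^{(ij)}$ denoting the value after resampling $Z_{ij}$ by an independent copy $Z_{ij}'$, the Efron--Stein inequality then yields
\[
\Var(N_k) \le \tfrac12\sum_{i<j} \E\bb{(N_k - N_k^{(ij)})^2} \le \tfrac12\sum_{i<j} (2k)^2\cdot 2p_{ij}(1-p_{ij}) \le 4k^2\sum_{i<j}p_{ij}.
\]

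To finish, I will use $\sum_{i<j}p_{ij} \le \tfrac{1}{2n}\sum_{i,j}a_{ij} = \tfrac{n}{2}\int\ka_{A_n}$, together with the fact that cut convergence forces $\int\ka_{A_n}\to\int\ka<\infty$: picking rearrangements $\ka_n'$ of $\ka$ with $\cn{\ka_{A_n}-\ka_n'}\to 0$, the inequality $|\int W|\le\cn W$ gives $|\int\ka_{A_n}-\int\ka|\to 0$. Hence $\Var(N_k(G(A_n))) = O(k^2 n)$, which is the required bound, and Chebyshev completes the proof. The only mildly delicate point in this plan is the bounded-differences estimate $|\Delta N_k|\le 2k$, but it reduces to a short case analysis of how merges and splits affect the total mass in size-$k$ components; everything else is standard concentration on independent edges.
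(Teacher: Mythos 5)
Your proof is correct, but it takes a genuinely different route from the paper's. The paper proves \refL{l_Nk2} by a second-moment computation in the same spirit as \refL{l_Nk}: one repeats the tree-integral analysis for \emph{pairs} of potential size-$k$ components to show that $\E N_k^2/n^2 \to \rho_k(\ka)^2$, from which $\Var(N_k/n)\to 0$ follows together with the first-moment limit. You instead deduce $\Var(N_k) = O(n)$ via the Efron--Stein inequality, using the deterministic bounded-differences property $|\Delta N_k| \le 2k$ for a single edge flip, plus the observation that cut convergence forces $\int\ka_{A_n}\to\int\ka<\infty$ (via $\lvert\int W\rvert\le\cn W$ applied to $\ka_{A_n}$ minus a suitable rearrangement of $\ka$), so $\sum_{i<j}p_{ij}=O(n)$. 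All of these steps check out: the Lipschitz bound is the same one the paper itself invokes later (in the proof of \refT{th_cb}, where it feeds Talagrand's inequality), and the variance bound you obtain is actually quantitatively explicit, $\Var(N_k)\le 4k^2\sum_{i<j}p_{ij}=O(k^2 n)$. The trade-off is roughly this: the paper's route reuses the tree-counting machinery already set up in \refL{l_Nk}, whereas yours decouples the concentration step entirely from the combinatorics of small components, needing only the $O(n)$ bound on expected edge count; both give the required $\op(1)$ fluctuation, but yours would still work if one only had the first-moment statement and an edge-count bound, without the detailed small-component expansion. Either is a complete proof.
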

\begin{proof}
As in~\cite{kernels} or~\cite{QRperc}, this extension of \refL{l_Nk}
requires almost no extra work:
simply repeat the proof of Lemma~\ref{l_Nk} but considering pairs of components
of order $k$ to show that with $N=N_k(G(A_n))$ we have $\E N^2/n^2\to \rho_k(\ka)^2$.
Since $\E N/n\to \rho_k(\ka)$ by \refL{l_Nk}, it follows that $\Var(N/n)=o(1)$,
so $N/n$ is concentrated about its mean.
\end{proof}

As in~\cite{kernels} or~\cite{QRperc} we have the following corollary,
where $N_{\ge \omega}=\sum_{k\ge \omega}N_k$.
\begin{corollary}\label{c_big}
Let $(A_n)$ be a sequence of symmetric $n$-by-$n$ matrices converging
in $\dcut$ to a kernel $\ka$.
Then whenever $\omega=\omega(n)$ tends to $\infty$ sufficiently slowly
we have
$N_{\ge \omega}(G(A_n))/n\pto \rho(\ka)$.
\end{corollary}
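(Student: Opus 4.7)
The plan is to turn this into a routine diagonalisation argument on top of \refL{l_Nk2}. Since every vertex lies in some component, we have the identity
\[
 \frac{N_{\ge\omega}(G(A_n))}{n} = 1 - \sum_{k<\omega}\frac{N_k(G(A_n))}{n}.
\]
On the branching process side, the total size of $\bpk$ is either finite (in which case it equals some $k\ge1$) or infinite, so
\[
 \sum_{k=1}^\infty \rho_k(\ka) + \rho(\ka) = 1,
\]
and in particular $\sum_{k<K}\rho_k(\ka)\uparrow 1-\rho(\ka)$ as $K\to\infty$. Given these two identities, it will suffice to show that for $\omega(n)\to\infty$ slowly enough, $\sum_{k<\omega(n)}N_k(G(A_n))/n\pto 1-\rho(\ka)$.

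The approach to the last convergence is a standard truncation and diagonalisation. First I would fix $K$ and use \refL{l_Nk2} together with the finiteness of the sum to conclude $\sum_{k<K}N_k(G(A_n))/n\pto\sum_{k<K}\rho_k(\ka)$. Next, for each $K\ge1$, I would choose $n_K$ such that, for all $n\ge n_K$,
\[
 \Pr\Bigpar{\bigabs{\textstyle\sum_{k<K}N_k(G(A_n))/n-\sum_{k<K}\rho_k(\ka)}>1/K}<1/K,
\]
taking the sequence $(n_K)$ strictly increasing. Defining $\omega(n)=K$ for $n_K\le n<n_{K+1}$ gives $\omega(n)\to\infty$. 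For any sequence $\omega'(n)$ with $\omega'(n)\to\infty$ and $\omega'(n)\le\omega(n)$, by monotonicity in $K$ of the partial sums of $\rho_k$ we would then have $\sum_{k<\omega'(n)}N_k/n\pto 1-\rho(\ka)$, which is what we need.

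There is no real obstacle in this argument; everything is forced by the two displayed identities and \refL{l_Nk2}. The only small point of care is that the statement of the corollary is existential in $\omega$ (``sufficiently slowly''), so the proof merely has to exhibit one such sequence, which the diagonal construction does. One could equally state the conclusion as: for \emph{every} sequence $\omega(n)\to\infty$ with $\omega(n)\le\omega^*(n)$, where $\omega^*$ is the explicit sequence built above, the stated convergence holds.
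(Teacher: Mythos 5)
Your proof is correct and follows exactly the standard diagonalisation argument that the paper invokes implicitly by citing \cite{kernels} and \cite{QRperc}; the paper itself gives no proof of \refC{c_big} beyond that citation, and what you have written is precisely the argument those references use to derive such a corollary from a per-$k$ convergence-in-probability statement like \refL{l_Nk2}.

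One small point worth making explicit: in the final step, deducing $\sum_{k<\omega'(n)}N_k/n\pto 1-\rho(\ka)$ for any $\omega'(n)\to\infty$ with $\omega'\le\omega^*$ requires two separate one-sided bounds. The upper bound uses the monotonicity of $K\mapsto\sum_{k<K}N_k$ to compare against the constructed $\omega^*$. The lower bound is obtained by fixing any $K_0$, noting that $\omega'(n)>K_0$ eventually, and then using $\sum_{k<\omega'(n)}N_k/n\ge\sum_{k<K_0}N_k/n\pto\sum_{k<K_0}\rho_k(\ka)$, which can be made arbitrarily close to $1-\rho(\ka)$ by taking $K_0$ large. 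Your phrase ``by monotonicity in $K$ of the partial sums of $\rho_k$'' compresses both of these; it is not wrong, but if you write this out in full you should make both directions explicit.
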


When we have completed the proof of \refT{th1}, it will follow
(arguing as in the proof of \refT{th2} in the reducible case)
that \refC{c_big} in fact holds for every $\go(n)\to\infty$ with
$\go(n)=o(n)$.

\subsection{Connecting the large components}\label{ss_conn}

To complete the proof of \refT{th1}
we shall use a modified form of the Erd\H os--R\'enyi `sprinkling'
argument to show that almost all vertices in `large' components are in fact
in a single component. We need a strengthened form of a lemma implicit in
Bollob\'as, Borgs, Chayes and Riordan~\cite{QRperc}. 
Before stating this, let us recall another lemma from~\cite{QRperc} (again
modified, but this time in a trivial way).
By an {\em $(a,b)$-cut} in a kernel $\ka$ we mean a partition $(A, A^\cc)$
of $[0,1]$ with $a\le \mu(A)\le 1-a$ such that $\int_{A\times A^\cc}
\ka \le b$.
\begin{lemma}\label{l_ir}
Let $\ka$ be an irreducible kernel, and let $0<a<\frac 12$ be given.
There is some $b=b(\ka,a)>0$ such that $\ka$ has no $(a,b)$-cut.
\end{lemma}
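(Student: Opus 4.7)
The plan is to argue by contradiction and extract, as a weak-$*$ limit of near-cuts, a function whose super-level sets violate irreducibility. Suppose no such $b>0$ exists; then there are measurable sets $A_n\subset[0,1]$ with $a\le\mu(A_n)\le 1-a$ and $\int_{A_n\times A_n^\cc}\ka\to 0$. Writing $g_n\=\mathbf{1}_{A_n}$, I would rewrite
\begin{equation*}
 \int_{A_n\times A_n^\cc}\ka
 = \int g_n(x)\bigpar{1-g_n(y)}\ka(x,y)\dd\mu(x)\dd\mu(y)
 = \int g_n\cdot\Tk(1-g_n)\dd\mu,
\end{equation*}
so the goal becomes: produce $f\in L^\infty$ with $0\le f\le 1$, $\int f\dd\mu\in[a,1-a]$, and $\int f\cdot\Tk(1-f)\dd\mu=0$.

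Since $L^1([0,1])$ is separable, the unit ball of $L^\infty$ is sequentially weak-$*$ compact, so after passing to a subsequence I may assume $g_n\wst f$ for some $f\in L^\infty$. The set $\set{h:0\le h\le 1\text{ a.e.}}$ is convex and strongly (hence weak-$*$) closed, so $0\le f\le 1$ a.e., and testing against the constant $1\in L^1$ yields $\int f\dd\mu=\lim\mu(A_n)\in[a,1-a]$.

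The main step is the limit transfer in the nonlinear integrand $g_n\Tk(1-g_n)$. Since $\ka$ is integrable, $\Tk 1,\Tk f\in L^1$. Applying \refL{kw} to $g_n-f\wst 0$, we obtain $\on{\Tk g_n-\Tk f}\to 0$. Decomposing
\begin{equation*}
 \int g_n\Tk(1-g_n)\dd\mu
 = \int g_n\Tk 1\dd\mu - \int g_n\Tk f\dd\mu - \int g_n\Tk(g_n-f)\dd\mu,
\end{equation*}
the first and second terms converge to $\int f\Tk 1\dd\mu$ and $\int f\Tk f\dd\mu$ respectively, by weak-$*$ convergence against the fixed $L^1$ test functions $\Tk 1$ and $\Tk f$, while the third term is bounded in absolute value by $\sn{g_n}\on{\Tk(g_n-f)}\to 0$. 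Hence $\int f\Tk(1-f)\dd\mu=0$; the integrand $\ka(x,y)f(x)(1-f(y))$ is non-negative, so it vanishes almost everywhere.

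To finish: since $\int f\dd\mu\in[a,1-a]$, both $\set{f>0}$ and $\set{f<1}$ have positive measure, so by monotonicity of $t\mapsto\mu(\set{f\ge t})$ one can choose $t\in(0,1)$ with $A\=\set{f\ge t}$ satisfying $0<\mu(A)<1$. On $A\times A^\cc$ one has $f(x)(1-f(y))\ge t(1-t)>0$, forcing $\ka=0$ a.e.\ on $A\times A^\cc$, which contradicts irreducibility. I expect the main obstacle to be precisely this limit transfer: weak-$*$ convergence of $g_n$ alone does not survive products, and the role of \refL{kw} is exactly to upgrade weak-$*$ convergence of $g_n$ to strong $L^1$ convergence of $\Tk g_n$, which is what is needed to close the argument.
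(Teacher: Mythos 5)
Your approach is genuinely different from the paper's. The paper proves \refL{l_ir} by truncating to $\ka\wedge 1$ and then citing Lemma~7 of~\cite{QRperc} for bounded kernels (graphons); it is essentially a reduction plus an external reference. You instead give a self-contained weak-$*$ compactness argument, reusing \refL{kw} from the branching-process subsection. This avoids the citation and handles unbounded $\ka$ directly, with no need to truncate. The decomposition of $\int g_n\Tk(1-g_n)\,d\mu$ into three terms, using $g_n\wst f$ against the fixed $L^1$ functions $\Tk 1$ and $\Tk f$, and using \refL{kw} to kill the genuinely nonlinear term $\int g_n\Tk(g_n-f)\,d\mu$, is exactly right, and that part of the argument is correct.

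Two points need repair. The justification ``convex and strongly closed, hence weak-$*$ closed'' is not a valid implication in a dual space (a norm-closed convex set need not be weak-$*$ closed); the correct argument for $0\le f\le 1$ a.e.\ is direct: for every non-negative $\phi\in L^1$, $\int f\phi=\lim\int g_n\phi\ge0$ and likewise $\int(1-f)\phi\ge0$. (Also pass to a further subsequence so that $\mu(A_n)$ converges, to get $\int f\,d\mu=\lim\mu(A_n)\in[a,1-a]$.) More substantively, the final step fails as stated: that $\set{f>0}$ and $\set{f<1}$ both have positive measure does \emph{not} give a $t\in(0,1)$ with $0<\mu(\set{f\ge t})<1$ --- for $f\equiv 1/2$ every $\mu(\set{f\ge t})$ is $0$ or $1$. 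The fix is a short case analysis. Put $A=\set{f>0}$; since $\int f\ge a>0$, $\mu(A)>0$. If $\mu(A)<1$, then $A^\cc\subset\set{f<1}$ up to a null set, and $\ka(x,y)f(x)(1-f(y))=0$ a.e.\ forces $\ka=0$ a.e.\ on $A\times A^\cc$, contradicting irreducibility. If $\mu(A)=1$, put $B=\set{f=1}$; since $\int f\le 1-a<1$, $\mu(B)<1$. If $\mu(B)>0$, then on $B\times B^\cc$ one has $f(x)=1>0$ and $1-f(y)>0$, so $\ka=0$ a.e.\ there (and on $B^\cc\times B$ by symmetry of $\ka$), again a contradiction. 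Finally if $\mu(B)=0$ then $0<f<1$ a.e., so $\ka=0$ a.e., which is trivially reducible. With this repair the argument is complete.
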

\begin{proof}
The same statement is proved in~\cite[Lemma 7]{QRperc},
but for {\em graphons}, i.e., bounded
kernels; all kernels considered in~\cite{QRperc} were bounded.
Although as it happens
we shall only use the bounded case, we may as well note
that the restriction is entirely irrelevant.
Indeed, irreducibility of
a kernel $\ka$ depends only on whether certain integrals are 0, and hence
only on the set where $\ka>0$. So if $\ka$ is irreducible,
so is the pointwise minimum $\ka'$ of $\ka$ and $1$.
If $\ka$ has an $(a,b)$-cut, then so does $\ka'$, so the result
follows from the bounded case.
\end{proof}

Here then is the key lemma that we shall need. 

\begin{lemma}\label{l_join}
Let $\ka$ be an irreducible kernel and $\delta>0$ a constant.
There are positive constants $\alpha=\alpha(\ka,\delta)$ and $c=c(\ka,\delta)$
such that
for every sequence $(A_n)$ of non-negative symmetric matrices with
$\dcut(A_n,\ka)\to 0$, for all large enough $n$ we have
\[
 \Pr(X\sim_{\alpha n} Y)\ge 1-\exp(-cn)
\]
for all disjoint $X$, $Y\subset [n]$ with $|X|$, $|Y|\ge \delta n$,
where $X\sim_k Y$ denotes the event that the graph $G(A_n)$ contains at
least $k$ vertex disjoint paths starting
in $X$ and ending in $Y$.
\end{lemma}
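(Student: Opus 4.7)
\emph{Plan.} My strategy is to reformulate the event $X\sim_{\alpha n}Y$ via Menger's theorem as the nonexistence of a small vertex cut in $G(A_n)$, lower bound the edge weight across any such cut using the irreducibility of $\ka$ together with cut-metric convergence, and close by a union bound combined with sprinkling.

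First, by Menger's theorem, $X\not\sim_{\alpha n}Y$ in $G(A_n)$ is equivalent to the existence of a separator $S\subseteq[n]$ with $|S|<\alpha n$ such that in $G(A_n)\setminus S$ no component meets both $X\setminus S$ and $Y\setminus S$. For such an $S$, letting $V_X$ be the union of the components of $G(A_n)\setminus S$ meeting $X$ and $V_Y=[n]\setminus(V_X\cup S)\supseteq Y\setminus S$, we obtain a partition $(V_X,S,V_Y)$ of $[n]$ with $|V_X|,|V_Y|\ge(\delta-\alpha)n$ (for $\alpha<\delta$) and with no edge of $G(A_n)$ between $V_X$ and $V_Y$. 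It therefore suffices to bound the probability that some such partition exists.

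Second, I would apply \refL{l_ir} to $\ka$ with parameter $(\delta-\alpha)/2$ to obtain $b=b(\ka,\delta)>0$ such that $\int_{A\times A^c}\ka>b$ whenever $\mu(A)\in[(\delta-\alpha)/2,1-(\delta-\alpha)/2]$. Choosing $\alpha$ small enough that $\int_E\ka<b/4$ for every $E$ of measure at most $\alpha$ (possible by integrability of $\ka$), and using $\dcut(A_n,\ka)\to 0$ to pick rearrangements $\ka'_n\sim\ka$ with $\cn{\ka_{A_n}-\ka'_n}\to 0$, one obtains uniformly in admissible partitions that $\tfrac{1}{n^2}\sum_{i\in V_X,j\in V_Y}a_{ij}\ge b/2$ for all large $n$. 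After truncating $A_n$ pointwise at $M(n)=o(n)$ via \refL{l_on} (which preserves cut convergence and, under the obvious coupling, only decreases the number of $X$-to-$Y$ paths), the inequality $1-t\le e^{-t}$ yields for each fixed such partition
\[
\Pr\bigpar{\text{no }V_X\text{--}V_Y\text{ edge in }G(A_n)}\le\exp(-bn/2).
\]

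Third, I would close via a union bound, beating the count of partitions by sprinkling if necessary. There are at most $3^n$ triples $(V_X,S,V_Y)$. If the irreducibility constant $b$ happens to satisfy $b/2>\log 3+c$, the naive union bound yields $\Pr(X\not\sim_{\alpha n}Y)\le e^{-cn}$ directly. For a general irreducible $\ka$, where $b$ may be arbitrarily small, I would amplify the exponent by sprinkling: write $A_n$ as an independent sum of $K$ copies of $A_n/K$ (working in the model $\Gp$ and converting back via \eqref{convert}), expose the $K$ independent graphs $\Gp(A_n/K)$ one at a time, and at each stage restrict the admissible partitions to those whose classes are unions of the components already exposed. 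In this way the effective count of surviving partitions is forced down to $e^{o(n)}$ before the final sprinkled piece is revealed, at which point the estimate of the second step finishes the job.

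The main obstacle is precisely this last step. The crude union bound only suffices when the irreducibility constant $b$ is a priori large, which cannot be arranged from the hypothesis alone. Making the sprinkling iteration actually shrink the number of partitions that survive requires carefully tracking, at each exposure round, how the components of the already-revealed graph constrain the remaining partitions, and only invoking the independence of the next sprinkled piece against this reduced family. This combinatorial bookkeeping, essentially that of Lemma~8 of~\cite{QRperc}, is the nontrivial heart of the proof; the rest (Menger's theorem, \refL{l_ir}, \refL{l_on}, and the cut-norm estimate) is comparatively routine.
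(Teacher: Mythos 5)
Your Menger reformulation and the edge-weight estimate are sound: for any partition $(V_X,S,V_Y)$ of $[n]$ with $|S|<\alpha n$ and $|V_X|,|V_Y|\ge(\delta-\alpha)n$, irreducibility of $\ka$ (via \refL{l_ir}) together with cut convergence gives $\sum_{V_X\times V_Y}a_{ij}\ge bn^2/2$, and after passing to $\max a_{ij}=o(n)$ via \refL{l_on} one has $\Pr(\text{no }V_X\text{--}V_Y\text{ edge})\le e^{-bn/4}$ for $n$ large. But the step you flag as the main obstacle is a genuine gap, not bookkeeping. Restricting the admissible partitions at round $t$ to those whose classes are unions of components of the exposed graph $G_1\cup\cdots\cup G_t$ does not force the count down to $e^{o(n)}$: each $G_i=\Gp(A_n/K)$ can have $\Theta(n)$ components of size $O(1)$, including isolated vertices, and every assignment of those components to $V_X$ or $V_Y$ remains admissible, so $\exp(\Theta(n))$ partitions survive every round. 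Capping the component count at $o(n)$ would essentially require knowing the giant-component structure of $G(A_n/K)$ in advance, which is exactly what \refL{l_join} is needed to establish.

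The paper (following \cite{QRperc}) avoids the union bound over partitions entirely. Working with $\kam=\ka\wedge1$ (so that \refL{l_ir} can be combined with the pointwise bound $\kam\le 1$), it builds \emph{deterministically} from $A_n$ an increasing chain $X=S_0\subset S_1\subset\cdots\subset S_\ell$, where $T_{t+1}=S_{t+1}\setminus S_t$ consists of the vertices $v\notin S_t$ with $\emi_0(v,S_t)\ge bn/2$. Because $\kam$ has no $(\delta/2,b)$-cut and is bounded by $1$, each layer satisfies $|T_{t+1}|\ge bn/2$, so the chain reaches size $(1-\delta/2)n$ in $\ell\le 2/b=O(1)$ steps. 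Some layer $T_{t_0}$ then contains $\Theta(n)$ vertices of $Y$, and one reveals edges greedily from this set into $S_{t_0-1}$ to build a partial matching of size $\Theta(n)$ (each candidate vertex having $\Theta(1)$ expected edges into the unmatched part of $S_{t_0-1}$), iterating backwards through the layers. After at most $\ell=O(1)$ rounds, each succeeding with probability $1-e^{-\Theta(n)}$, this yields $\Theta(n)$ vertex-disjoint $X$-$Y$ paths. This expansion-plus-greedy-matching argument, with no enumeration of cuts, is what makes the exponential bound survive arbitrarily small irreducibility constants $b$, which the naive union bound over partitions cannot do.
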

A version of this lemma, but with the additional condition that the kernel $\ka$ and entries
of the matrices $A_n$ are uniformly bounded, is implicit in~\cite{QRperc} (see~\cite[Lemma 4.2]{clustering}).
Although the basic strategy of the proof of Lemma~\ref{l_join} is the same as that in~\cite{QRperc},
dealing with unbounded kernels requires considerable care, so we shall write
out the proof in full.
\begin{proof} 
We write $(a_{ij})$ for the entries of $A_n$, suppressing the dependence on $n$.
As before, by \refL{l_on} we may assume that $\max a\sij=o(n)$, and in particular that
$
 a\sij\le n/100,
$
say. We may also assume that $\delta<1/10$, say. 

Throughout this proof we view $A_n$ as a (dense) weighted graph.
In particular, given sets $V$ and $W$ of vertices of $A_n$, i.e., subsets of $[n]$,
we write
\[
 e(V,W) = \sum_{v\in V}\sum_{w\in W} a_{vw}
\]
for the total edge weight from $V$ to $W$. Similarly, for $v\in [n]$ and $W\subset [n]$,
\[
 e(v,W) = e(\{v\},W) = \sum_{w\in W} a_{vw}.
\]

Let $\kam=\ka\wedge 1$ be the pointwise minimum of $\ka$ and $1$.
Since $\dcut(A_n,\ka)\to 0$, there are rearrangements $\ka_n$ of $\ka$ such that
\begin{equation}\label{c}
 \cn{\ka_{A_n}-\ka_n}\to 0.
\end{equation}
Let $\kam_n=\ka_n\wedge 1$, noting that $\kam_n$ is a rearrangement of $\kam$.

Identifying a subset of $[n]$ with the union of the corresponding
intervals of length $1/n$ in $[0,1]$, for subsets $V$ and $W$ of $[n]$ we set
\[
 e_0(V,W) = n^2 \int_{V\times W} \ka_n(x,y) \dd x\dd y
\]
and
\[
 \emi_0(V,W) = n^2 \int_{V\times W} \kam_n(x,y) \dd x\dd y.
\]
From \eqref{c} there is some $\eta(n)\to 0$ such that
\[
 \bigabs{e(V,W) - e_0(V,W)} 
= n^2\left|\int_{V\times W}(\ka_{A_n}-\ka_n)\right| \le n^2\eta(n)
\]
for all $V$ and $W$.
Since $\ka\ge\kam$, so $e_0(V,W)\ge \emi_0(V,W)$, it follows that
\begin{equation}\label{em}
 e(V,W) \ge \emi_0(V,W) - n^2\eta(n).
\end{equation}

By Lemma~\ref{l_ir} there is some $b>0$ such that $\kam$ has no $(\delta/2,b)$-cut.
We may and shall assume that $b<1/10$, say.
Since each $\kam_n$ is a rearrangement of $\kam$, no $\kam_n$ has a $(\delta/2,b)$-cut.

Fix disjoint sets $X$ and $Y$ of vertices, each of size at least $\delta n$.
Arguing as in~\cite{QRperc}, we shall
inductively define an increasing sequence $S_0$, $S_1$, $\ldots$,
$S_\ell$ of sets of vertices in a way that depends on $A_n$, $X$ and $Y$,
but not on the random graph $G(A_n)$. There will be some additional
complications due to unbounded matrix entries; it turns out we can sidestep
these with appropriate use of the inequality \eqref{em}.

We start with $S_0=X$, noting that
$|S_0|\ge \delta n$. We shall stop the sequence when $|S_t|$ first
exceeds $(1-\delta/2)n$. Thus, in defining $S_{t+1}$ from $S_t$, we
may assume that $\delta n\le |S_t|\le (1-\delta/2) n$. Since $\kam_n$ has
no $(\delta/2,b)$-cut, we have
\[
 \sum_{v\notin S_t} \emi_0(v,S_t) = \emi_0(S_t^\cc,S_t) =n^2 \int_{S_t^\cc\times S_t} \kam_n \ge bn^2.
\]
Let
\[
 T_{t+1}= \{v\notin S_t: \emi_0(v,S_t) \ge bn/2\}.
\]
Since $\kam_n\le 1$ holds pointwise, $\emi_0(v,S_t)\le |S_t|\le n$ for any $v$.
Thus
\[
 bn^2 \le \emi_0(S_t^\cc,S_t)
 \le \frac{bn}{2} \bigabs{[n]\setminus(S_t \cup T_{t+1})}
 + n |T_{t+1}|
 \le \frac{bn^2}{2} + n |T_{t+1}|.
\]
Hence $|T_{t+1}|\ge \frac{bn}{2}$.
Set $S_{t+1}=S_t\cup T_{t+1}$,
and continue the construction until we reach an $S_\ell$
with $|S_\ell|\ge(1-\delta/2) n$. Note that $\ell\le
2/b = O(1)$.

We shall now turn to the random graph $G(A_n)$, uncovering the edges between $T_t$ and
$S_{t-1}$, working backwards from $T_\ell$. It will be convenient to
set $T_0=S_0$, so $S_t=\bigcup_{j=0}^t T_j$. Since $|S_\ell|\ge
(1-\delta/2)n$, while $|Y|\ge \delta n$, the set $S_\ell$ contains
at least $\delta n/2$ vertices from $Y$. Since $S_0=T_0=X$ is
disjoint from $Y$, it follows that there is some $t_0$, $1\le
t_0\le \ell$, for which $T_{t_0}$ contains a subset $Y_0$ of $Y$ with
\[
 |Y_0| \ge \delta n/(2\ell).
\]

Next, we aim to construct a set $X_0\subset S_{t_0-1}$ with
$|X_0|\geq b|Y_0|/10$ such that every $x\in X_0$ is joined to
some $y\in Y_0$ by an edge of $G(A_n)$. In fact, we shall
look for a partial matching from $Y_0$ to $S_{t_0-1}$ of size exactly
\[
 N=b|Y_0|/10;
\]
we ignore the irrelevant rounding to integers.
Let us list the vertices of $Y_0$ as $\{y_1,\ldots,y_s\}$.
We shall test each $y_i$ in turn to see whether it has a neighbour
in $S_{t_0-1}$; the complication is that we must avoid vertices
of $S_{t_0-1}$ that are neighbours of earlier $y_j$. We shall also
stop looking for new neighbours if we already have a large enough matching.

Formally,
we inductively define subsets $Z_0,Z_1,\ldots,Z_s$ of $S_{t_0-1}$,
starting with $Z_0=\emptyset$. For $1\le i\le s$,
if $|Z_{i-1}|=N$ then we set $Z_i=Z_{i-1}$.
If $|Z_{i-1}|<N$ and $y_i$ has a neighbour
$z\in S_{t_0-1}\setminus Z_{i-1}$, we set $Z_i=Z_{i-1}\cup\{z\}$
for any such neighbour $z$. If no such neighbour exists, we set $Z_i=Z_{i-1}$.
Note that $Z_0\subset Z_1\subset \cdots\subset Z_s$ is a random sequence of sets,
and $|Z_s|\le N$.

We claim that the following statement holds deterministically:
if $n$ is large enough, then
there are at least $s/2$ values of $i$ for which
\begin{equation}\label{enough}
 e(y_i, S_{t_0-1}\setminus Z_{i-1} ) \ge bn/4.
\end{equation}

Suppose that this claim does not hold, and let $Y'\subset Y_0$
be a set of at least $s/2$ vertices $y_i$ 
for which $e(y_i,S_{t_0-1}\setminus Z_{i-1} ) <bn/4$.
Since $Z_{i-1}\subset Z_s$,
for all $y\in Y'$ we have $e(y,S_{t_0-1}\setminus Z_s) <bn/4$.
Summing over $y$, we have
\[
 e(Y',S_{t_0-1}\setminus Z_s) < bn|Y'|/4.
\]
From \eqref{em} it follows that
\[
 \emi_0(Y',S_{t_0-1}\setminus Z_s) < bn|Y'|/4 + n^2\eta(n).
\]
On the other hand, since $Y'\subset T_{t_0}$, we have
\[
 \emi_0(Y',S_{t_0-1}) \ge bn|Y'|/2.
\]
Consequently,
\[
 \emi_0(Y',Z_s) = \emi_0(Y',S_{t_0-1}) - \emi_0(Y',S_{t_0-1}\setminus Z_s)
 > bn|Y'|/4- n^2\eta(n).
\]

Since $|Y'|\ge |Y|/2=\Theta(n)$, we see that if $n$ is large enough,
then $\emi_0(Y',Z_s)\ge bn|Y'|/5$.
But $\kam$ is bounded by $1$, so
\[
 \emi_0(Y',Z_s)\le |Y'||Z_s| \le |Y'|N = |Y'|(b|Y_0|/10) < bn|Y'|/5.
\]
This contradiction establishes the claim.

Suppose that for some $i$ we have $e(y_i,S_{t_0-1}\setminus Z_{i-1})\ge bn/4$.
Then the expected number of edges of $G(A_n)$ from $y$ to $S_{t_0-1}\setminus Z_{i-1}$ is
at least $b/4$, so the probability that there is at least one such edge is at least $b/5$.

From the claim above,
and independence of edges from different vertices $y$,
it follows that unless we reach $|Z_i|=N$ at some stage, 
the number of edges in the matching we find stochastically dominates
a Binomial distribution $D$ with parameters $|Y_0|/2$ and $b/4$.
More precisely, the probability that $|Z_s|< N$
is at most the probability that $D<N$.
But $D$ has mean $|Y_0|b/8 \ge N=|Y_0|b/10$. Since $|Y_0|=\Theta(n)$,
it follows (by Chernoff's inequality)
that with probability $1-\exp(-\Theta(n))$ we have
$|Z_s|\ge N$.

In summary, with probability at least $1-\exp(-\Theta(n))$ we find
a set $X_0=Z_s$ of at least $b|Y_0|/10$ vertices of $S_{t_0-1}$
such that every $x\in X_0$ is joined to some $y=y(x)\in Y_0$ by an edge
of $G(A_n)$, with the $y(x)$ distinct.

Suppose we do find such an $X_0$.
As $|X_0|\ge b|Y_0|/10$, there is some $t_1<t_0$ such that
$Y_1=X_0\cap T_{t_1}$ contains at least $b|Y_0|/(10\ell)$
vertices. If $t_1\ge 1$ then, arguing as above, with probability
$1-\exp(-\Theta(n))$ we find a $t_2$ and a set $Y_2$ of at least
$b^2|Y_0|/(10\ell)^2$ vertices of $T_{t_2}$ joined in
$G(A_n)$ to $Y_1$, and so on. As the sequence $t_0$, $t_1,\ldots$
is decreasing, this process terminates after $s\le \ell$ steps with
$t_s=0$. Hence, with probability $1-\exp(-\Theta(n))$ we find a set
$Y_s$ of at least $(b/(10\ell))^\ell |Y_0| = \Theta(n)$
vertices of $T_0=S_0= X$ joined in $G(A_n)$ by
vertex disjoint paths to vertices in $Y$,
completing the proof of \refL{l_join}.
\end{proof}

As in~\cite{QRperc}, \refC{c_big} and \refL{l_join} easily combine to give
\refT{th1}.
\begin{proof}[Proof of \refT{th1}]
Let $G_n=G(A_n)$.
By \refC{c_big} there is some $\omega=\omega(n)$ with $\omega(n)\to\infty$
such that $N_{\ge \omega}(G_n)/n\pto \rho(\ka)$.
We may and shall assume that $\omega=o(n)$.
Since 
\[
 C_1(G_n)+C_2(G_n) 
\le \max\{2\omega, N_{\ge\omega}(G_n)+\omega\} \le \rho(\ka)n+\op(n),
\]
it suffices to prove that if $\ka$ is irreducible then
\begin{equation}\label{below}
 C_1(G_n)\ge \rho(\ka)n +\op(n).
\end{equation}
If $\rho(\ka)=0$, then this statement holds vacuously, so suppose that
$\ka$ is irreducible and
$\rho(\ka)>0$.

Fix $0<\eps<\rho(\ka)/10$.
By \cite[Theorem 6.4]{kernels} we have $\rho((1-\gamma)\ka)\upto \rho(\ka)$
as $\gamma\to 0$. Fix $0<\gamma<1$ such that $\rho((1-\gamma)\ka)>\rho(\ka)-\eps$.

Let $G_n'=G((1-\gamma)A_n)$ and $G_n''=G(\gamma A_n)$ be independent. We may
and shall assume that $G_n'\cup G_n''\subseteq G_n$.
Applying \refC{c_big} to the sequence $(1-\gamma)A_n$, which tends to $(1-\gamma)\ka$
in $\dcut$, we see that there is an $\omega=\omega(n)$ tending to infinity such that
\begin{equation}\label{Gn'}
 N_{\ge\omega}(G_n')\ge (\rho((1-\gamma)\ka)-\eps)n \ge (\rho(\ka)-2\eps) n
\end{equation}
holds whp. Let us condition on $G_n'$ assuming that \eqref{Gn'} does hold.
Let $B$ be the set of vertices of $G_n'$ in components of size at
least $\omega$ (we call these components \emph{large}),
so $|B|\ge (\rho(\ka)-2\eps)n$.

If $C_1(G_n)\le (\rho(\ka)-3\eps)n$ then there is a partition $(X,Y)$ of $B$
such that $|X|$, $|Y|\ge \eps n$, with no path in $G_n$ joining $X$ to $Y$.
Let us call such a partition {\em bad}. Since $G_n'\subset G_n$,
each of $X$ and $Y$ must be a union of large
components of $G_n'$, so there 
are at most $2^{n/\omega(n)}$ choices for $(X,Y)$.
But the probability that a given pair $(X,Y)$ is bad
is at most the probability that there is no path in $G_n''\subset G_n$ from $X$ to $Y$;
by Lemma~\ref{l_join} this probability is $\exp(-\Theta(n))$. Hence
the expected number of bad partitions is $o(1)$, and whp there is no such partition.
Thus $C_1(G_n)\ge (\rho(\ka)-3\eps)n$ whp.
Letting $\eps\to 0$, the bound \eqref{below} follows, and this is all that
is required to complete the proof of \refT{th1}.
\end{proof}

\subsection{The reducible case: proof of \refT{th2}}\label{ss_red}

In this subsection we shall justify the terminology
by showing that one can reduce the reducible case to the irreducible
case. Surprisingly, in this setting (unlike that
of~\cite{clustering}), this is not quite immediate.

The key step is a lemma allowing us to partition a sequence of
matrices converging to a reducible kernel.  By the {\em
restriction} $\ka_\sss$ of a kernel $\ka$ to a set $\sss\subset [0,1]$
we simply mean the function
obtained by restricting $\ka$ to ${\sss\times\sss}$,
which we may think of as a kernel on a
measure space that is no longer a probability space.
It will often be convenient to consider the rescaled restriction
$\ka_\sss'$: when $\sss$ is an interval (which we can always assume)
this is the kernel on $[0,1]^2$ obtained by linearly `stretching'
$\ka_\sss$ in the obvious way.

\begin{lemma}\label{split}
Let $\ka$ be a reducible kernel and $(\sss_1, \sss_2)$ a partition
of $[0,1]$ with $0<\mu(\sss_1),\mu(\sss_2)<1$ such that $\ka_{\sss_1}$
is irreducible and $\ka$ is zero a.e.\ on $\sss_1\times\sss_2$.
If $(A_n)$ is a sequence of non-negative symmetric matrices such that
$\dcut(A_n,\ka)\to 0$ then we may find for each $n$ complementary
subsets $V_{n,1}$ and $V_{n,2}$ of $[n]$ such $|V_{n,i}|\sim \mu(\sss_i) n$
and $\dcut(A_{n,i},\ka_i')\to 0$, where $\ka_i'=\ka_{\sss_i}'$ is the
rescaled restriction
of $\ka$ to $\sss_i$ and $A_{n,i}$ is the principal minor of $A_n$
obtained by selecting the rows and columns indexed by $V_{n,i}$.
Moreover, the sum of the entries of $A_n$ corresponding to $(i,j)\in V_{n,1}\times V_{n,2}$ is $o(n^2)$.
\end{lemma}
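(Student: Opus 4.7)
The plan is to exploit the measure-preserving bijection realizing the cut metric, assigning fractional types to the vertices of $[n]$ and then rounding. After rearranging $\ka$ we may assume $\sss_1=(0,p_1]$ and $\sss_2=(p_1,1]$ with $p_i=\mu(\sss_i)$. By definition of $\dcut$, choose measure-preserving bijections $\tau_n:[0,1]\to[0,1]$ with $\cn{\ka_{A_n}-\ka^{\tau_n}}=\eps_n\to 0$; write $U_i=\tau_n^{-1}(\sss_i)$ and $g_i(v):=n|I_v\cap U_i|\in[0,1]$, where $I_v=((v-1)/n,v/n]$. Then $g_1(v)+g_2(v)=1$, $\sum_v g_i(v)=np_i$, and testing the cut norm against the pair $(U_1,U_2)$ together with $\int_{\sss_1\times\sss_2}\ka=0$ gives
\[
 \sum_{v,w}a_{vw}g_1(v)g_2(w) \;=\; n^2\int_{U_1\times U_2}\ka_{A_n} \;\le\; n^2\eps_n \;=\; o(n^2).
\]

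The partition $(V_{n,1},V_{n,2})$ is then produced by independent randomized rounding: place each $v\in[n]$ into $V_{n,1}$ with probability $g_1(v)$, else into $V_{n,2}$. Chebyshev yields $|V_{n,i}|/n\pto p_i$, and the expected cross-sum $\E\sum_{v\in V_{n,1},\,w\in V_{n,2}} a_{vw}\le\sum_{v,w}a_{vw}g_1(v)g_2(w)=o(n^2)$; Markov then gives the cross-sum bound with positive probability. Choosing a suitable deterministic realisation (a subsequential argument if one wants the event to hold outright) simultaneously delivers $|V_{n,i}|\sim p_i n$ and the final cross-sum assertion of the lemma.

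The main obstacle is verifying cut convergence $\dcut(A_{n,i},\ka_i')\to 0$. For $i=1$, the strategy is to build a coupling between $V_{n,1}$ (with uniform measure) and $\sss_1$ (with $\mu/p_1$) out of $\tau_n$, by associating each $v\in V_{n,1}$ with the piece $\tau_n(I_v)\cap\sss_1\subset\sss_1$ of mass $g_1(v)/n$. Restricting $\cn{\ka_{A_n}-\ka^{\tau_n}}\le\eps_n$ to $W_1\times W_1$ with $W_1:=\bigcup_{v\in V_{n,1}}I_v$ controls the ``aligned'' contribution on $(W_1\cap U_1)^2$; the ``misaligned'' contribution involves $W_1\cap U_2$, whose measure is governed by $\tfrac{1}{n}\sum_{v\in V_{n,1}}g_2(v)$---which is not in general $o(1)$ deterministically for arbitrary fractional $g_i$.

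To overcome this, I would first refine $\tau_n$ so that the $g_i(v)$ are as extremal as possible. An exchange argument---swap the $\tau_n$-images of two intervals $I_v,I_w$ with $g_1(v),g_1(w)\in(0,1)$ so as to concentrate their images entirely in one of $\sss_1,\sss_2$---reduces the total boundary mass $\sum_v g_1(v)g_2(v)$; provided each swap changes $\cn{\ka_{A_n}-\ka^{\tau_n}}$ by $o(1)$, iteration drives the boundary below any prescribed $\delta>0$. The $o(1)$ change per swap can be controlled via absolute continuity of $\ka$ on each $\sss_i\times\sss_i$. Alternatively, one may approximate $\ka$ in $L^1$ by a finite-type kernel vanishing on $\sss_1\times\sss_2$, align $\tau_n$ to the resulting discrete block structure exactly, and pass to the limit by diagonalizing over the approximation parameter.
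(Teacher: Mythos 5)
Your setup is sound and you correctly locate the crux of the problem, but you never close the gap you identify---and the hypothesis you never invoke, the irreducibility of $\ka_{\sss_1}$, is exactly what is needed. Without it the ``splitting'' quantity $\tfrac1n\sum_v\min\{g_1(v),g_2(v)\}$ really can stay bounded away from zero: for a reducible $\ka_{\sss_1}$ (say two equal cliques inside $\sss_1$ mirrored by two inside $\sss_2$) the extremal rearrangements can shuffle clique blocks between $\sss_1$ and $\sss_2$, and your randomized rounding would then produce minors whose cut distance to $\ka_i'$ does not vanish. Neither of your proposed repairs is actually a proof. For the exchange argument, a single swap of the images of $I_v,I_w$ changes $\cn{\ka_{A_n}-\ka^{\tau_n}}$ by an amount bounded only by something like $\int_{(I_v\cup I_w)\times[0,1]}(\ka+\ka_{A_n})$, and to eliminate all splitting you may need $\Theta(n)$ swaps; integrability of $\ka$ then bounds the accumulated change by $O(\int\ka)$, not $o(1)$. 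For the finite-type approximation, the key step ``align $\tau_n$ to the block structure exactly'' is precisely the statement you are trying to prove.

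The missing step, which the paper proves, is a deterministic claim: \emph{any} sequence $(\tau_n)$ with $\cn{\ka_{A_n}-\ka^{\tau_n}}\to 0$ automatically has splitting $o(1)$, and this is where irreducibility enters. Suppose for contradiction that along a subsequence the splitting is $\ge\delta>0$. For each $v$ choose disjoint $E_{v,1},E_{v,2}\subset I_v$ of \emph{equal} measure $\min_j\mu(I_v\cap\tau_n\qw(\sss_j))$ with $E_{v,j}\subset\tau_n\qw(\sss_j)$, and set $E_j=\bigcup_v E_{v,j}$, so $\mu(E_j)\ge\delta$ and $\tau_n(E_j)\subset\sss_j$. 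Since $\ka$ vanishes on $\sss_2\times\sss_1$ we have $\int_{E_2\times\tau_n\qw(\sss_1)}\ka^{\tau_n}=\int_{\tau_n(E_2)\times\sss_1}\ka=0$, so the cut bound yields $\int_{E_2\times\tau_n\qw(\sss_1)}\ka_{A_n}=o(1)$. But $\ka_{A_n}$ is constant on each $I_v\times I_w$, and $E_1,E_2$ meet each $I_v$ in sets of the same measure, so in fact $\int_{E_1\times\tau_n\qw(\sss_1)}\ka_{A_n}=\int_{E_2\times\tau_n\qw(\sss_1)}\ka_{A_n}=o(1)$. Transferring back via the cut bound gives $\int_{\tau_n(E_1)\times\sss_1}\ka=o(1)$, where $\tau_n(E_1)\subset\sss_1$ has measure $\ge\delta$. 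Irreducibility of $\ka_{\sss_1}$ implies $\int_{\sss_1}\ka(x,y)\,d\mu(y)>0$ for a.e.\ $x\in\sss_1$, hence $\inf\{\int_{A\times\sss_1}\ka : A\subset\sss_1,\ \mu(A)\ge\delta\}>0$; contradiction. Once the splitting is $o(1)$, your randomized (or greedy) rounding and the cross-sum estimate do yield the $V_{n,i}$ with the stated properties, which is what the paper leaves as the ``tedious details''.
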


In other words, we may split the vertex set of the random graph $G(A_n)$
into $V_{n,1}$ and $V_{n,2}$ so that the corresponding random graphs
have edge probability matrices converging to the restrictions
of $\ka$ to $\sss_1$ and $\sss_2$ respectively
(after suitable rescaling).

\begin{proof}
Suppose that $\dcut(A_n,\ka)\to 0$.
Let $(\rn)$ be a sequence of measure-preserving bijections from $[0,1]$ to itself,
corresponding to rearrangements of the kernels $\ka_{A_n}$.
Let $I_{n,i}=((i-1)/n,i/n]$ denote the subinterval of $[0,1]$ corresponding
to vertex $i$, i.e., to the $i$th row/column of $A_n$.
Then, in the rearrangement, $I_{n,i}\cap \rn(\sss_j)$ is the portion
of $I_{n,i}$ that is rearranged to correspond to part of $\sss_j$.
We write
\[
 s_{n,i} = \min_{j=1,2} \mu\bigpar{I_{n,i}\cap \rn(\sss_j)}
\]
for the extent that $I_{n,i}$ is {\em split} between $\sss_1$ and $\sss_2$,
noting that $0\le s_{n,i}<\mu(I_{n,i})=1/n$.

We call the sequence $(\rn)$ {\em good} if
\begin{equation}\label{2gc}
 \cn{ \ka_{A_n}^{(\rn)} -  \ka } \to 0,
\end{equation}
and
\[ 
 s_n = \sum_{i=1}^n s_{n,i} =o(1).
\]
Such a good sequence corresponds to rearranging $A_n$ to be close
to $\ka$ in the cut norm, while mapping almost every vertex
either almost entirely into $\sss_1$ or almost entirely into $\sss_2$.
It is not too hard to check that if such a sequence exists, then the
first conclusion of the lemma follows; we omit the tedious details, noting
only that since $\ka$ is integrable, for any subsets $X_n$ of $[0,1]^2$ with
measure tending to $0$ we have $\int_{X_n}\ka\to 0$. This shows that
changing our rearrangement on a set of measure $o(1)$
will not affect cut norm convergence. To see that the final statement follows,
let $U_{n,j}$ be the subset of $[0,1]$ corresponding to $V_{n,j}$. Then
\begin{equation*}
  \begin{split}
  \int_{U_{n,1}\times U_{n,2}}\ka_{A_n}
&=  \int_{\rn\qw(U_{n,1})\times \rn\qw(U_{n,2})}\ka_{A_n}^{(\rn)} 
\\&
\le  \cn{ \ka_{A_n}^{(\rn)} -  \ka } 
+ \int_{\rn\qw(U_{n,1})\times \rn\qw(U_{n,2})}\ka 
=o(1),	
  \end{split}
\end{equation*}
since $\rn\qw(U_{n,j})$ differs from $\sss_j$ in a set of measure $o(1)$.

It remains to prove that a good sequence exists. By hypothesis, there
is a sequence $(\rn)$ such that \eqref{2gc} holds; as we shall see,
any such sequence must be good! Indeed, suppose $s_n$ does not tend to zero.
Then passing to a subsequence, we may assume that $s_n\ge \delta$ for every $n$,
for some $\delta>0$.

For every $n$ in our (sub)sequence, and each $i\in[n]$,
pick subsets $E_{i,1}, E_{i,2}$ of $I_{n,i}$ of measure
$s_{n,i}$ with $E_{i,j}\subset \rn(\sss_j)$;
this is possible by the definition of $s_{n,i}$.
Finally, for $j=1,2$, let $E_j=\bigcup_{i=1}^n E_{i,j}$, noting that $E_j$ depends
on $n$, and that $\mu(E_j)=s_n\ge \delta$.

Since $\rn^{-1}(E_2)\subset \sss_2$, 
we have $\int_{\rn^{-1}(E_2)\times \sss_1}\ka=0$.
From \eqref{2gc} and the definition of the cut norm it follows 
that $\int_{E_2\times \rn(\sss_1)} \ka_{A_n}=o(1)$.
But
\[
 \int_{E_1\times \rn(\sss_1)} \ka_{A_n}= \int_{E_2\times \rn(\sss_1)} \ka_{A_n},
\]
since $\ka_{A_n}(x,y)$ depends on $x$ only through which interval $I_{n,i}$
the point $x$ lies in, and $E_1$ and $E_2$ intersect each $I_{n,i}$
in sets of the same measure.
Hence, $\int_{E_1\times \rn(\sss_1)} \ka_{A_n}=o(1)$, and, using
\eqref{2gc} again,
$I=\int_{\rn^{-1}(E_1)\times \sss_1}\ka=o(1)$.

But $\ka_{\sss_1}$ is irreducible, so for a.e.\ $x$ in $\sss_1$
we have $f(x)=\int_{\sss_1} \ka(x,y)\dd y>0$.
It follows that there is some $\gamma>0$ such that the integral of $f$
over any subset of $\sss_1$
of measure at least $\delta$ is at least $\gamma$.
But $I$ is exactly such an integral, 
since $\rn^{-1}(E_1)\subset \sss_1$,
giving a contradiction.
This contradiction shows that $(\rn)$ is indeed good, completing the proof.
\end{proof}

Using \refL{split}, it is not hard to deduce \refT{th2} from \refT{th1}.
\begin{proof}[Proof of~\refT{th2}]
Multiplying the kernel $\ka$ by $c$, we may and shall assume that $c=1$.

Part (a) of \refT{th2} follows from
the first statement of \refT{th1};
part (c) is a restatement of the second statement of \refT{th1},
so it remains only to prove part (b).

As shown in~\cite[Lemma 5.17]{kernels}, we may decompose $\ka$ into
irreducible kernels. 
More precisely, there is a partition $(\sss_i)_{i=0}^N$
of $[0,1]$
with $0\le N\le\infty$
such that each
$\sss_i$ has positive measure,
the restriction $\ka_i$
of $\ka$ to $\sss_i\times \sss_i$ is irreducible for each $i\ge 1$,
and $\ka$ is zero a.e.\ off $\bigcup_{i=1}^N \sss_i\times\sss_i$.

By assumption, $\dcut(A_n,\ka)\to 0$. Applying~\refL{split} repeatedly,
for any finite $N'\le N$ we may
split the vertex set $[n]$ of the graph $G_n$ into $N'+1$
subsets $V_{n,i}$, $i=0,1,\ldots,N'$, such that,
for each $i\neq0$, 
$|V_{n,i}|\sim \mu(\sss_i)n$
and $\dcut(A_{n,i}',\ka_i')\to 0$,
where $A_{n,i}'$ is the submatrix of $A_n$ corresponding to $V_{n,i}$,
and $\ka_i'=\ka_{\sss_i}'$ is the rescaled restriction of $\ka$ to $\sss_i$.
Let $G_{n,i}$ be the subgraph of $G_n$ induced by $V_{n,i}$.

In what follows it is convenient to add zero rows and columns to $A_{n,i}'$ to obtain
an $n$-by-$n$ matrix $A_{n,i}$, and to consider the kernel $\ka_i$ on $[0,1]^2$
agreeing with $\ka$ on $\sss_i^2$ and equal to zero off this set.
It is easy to check that $\dcut(A_{n,i}',\ka_i')\to 0$ implies
$\dcut(A_{n,i},\ka_i)\to 0$.
Although $\ka_i$ is formally reducible, it is so only in a trivial
sense (called quasi-irreducible in \cite{kernels}),
and by rescaling suitably it is easy to check that \refT{th1} applies
to such kernels (with, as it happens, no extra factors from the rescaling),
so by \refT{th1} we have $C_1(G_{n,i})/n\pto \rho(\ka_i)$ for each $i\ge1$.

By assumption, $\norm{T_\ka}>1$. But
\begin{equation}\label{2sn}
 \norm{T_\ka}=\sup_i \norm{T_{\ka_i}},
\end{equation}
so there is some $i$ with $\norm{T_{\ka_i}}>1$. 
We choose $N'\ge i$. 
Since
$C_1(G_n)\ge C_1(G_{n,i})$, it follows that $C_1(G_n)=\Theta(n)$ whp
as claimed.
Finally, suppose that $\ka$ is bounded, by $M$, say.
Since $\norm{T_{\ka_i}}\le M\mu(\sss_i)$, 
only finitely many of the $T_{\ka_i}$ can have norm exceeding any constant,
and the supremum in \eqref{2sn} is attained, say at $i=j$.
As noted in~\cite{QRperc}, the bound
$\rho(\ka)\ge (\norm{T_{\ka}}-1)/\sup \ka$
is implicit in~\cite{kernels}. Applying this to $\ka_j$,
the final part of \refT{th2}(b) follows.
\end{proof}

Note that we cannot say what the limiting size of the giant component
is in the reducible case: we know that there are $\op(n)$ edges joining
different $G_{n,i}$, but have no further control on these edges (which
may be completely absent), so we do not know whether they 
link the largest components in the different $G_{n,i}$ or not.
Thus $C_1(G_n)/n$ may be as small as $\max_i \rho(\ka_i)+\op(1)$,
or as large as $\rho(\ka)+\op(1)=\sum_i\rho(\ka_i)+\op(1)$.

Let us close this subsection with a conjecture. By a {\em rearrangement}
$B_n$ of a matrix $A_n$ we simply mean a matrix obtained from $A_n$
by applying some permutation to the columns, and the same permutation
to the rows.

\begin{conjecture}
Let $\ka$ be a kernel, and $(A_n)$ a sequence of non-negative symmetric
matrices in which $A_n$ is $n$-by-$n$, such that $\dcut(A_n,\ka)\to 0$.
Then there exist rearrangements $B_n$ of each $A_n$ such that
$\cn{\ka_{B_n}-\ka} \to 0$.
\end{conjecture}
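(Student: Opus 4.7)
The plan is to prove the conjecture by reducing first to the case where $\ka$ is bounded, then to where it is of finite type, and finally constructing the permutation $\pi_n$ explicitly via a block-assignment argument. Set $\ka^M\=\ka\wedge M$ and let $A_n^M$ have entries $a_{ij}\wedge M$. By \refL{l_on}, $\sup_n\on{\ka_{A_n}-\ka_{A_n^M}}\to 0$ as $M\to\infty$, and $\on{\ka-\ka^M}\to 0$ by integrability. Thus if we produce permutations $\pi_n^{(M)}$ with $\cn{\ka_{A_n^M}^{(\pi_n^{(M)})}-\ka^M}\to 0$ for each fixed $M$, the triangle inequality (together with invariance of the cut norm under rearrangements) yields the general conjecture via a diagonal choice $M=M(n)\to\infty$ slowly.

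Having reduced to bounded $\ka$, I approximate it in $L^1$ by finite-type kernels $\ka^{(r)}$, with partition $(S_1,\dots,S_r)$ consisting of intervals of $[0,1]$. Since $\dcut(\cdot,\cdot)\le\cn{\cdot}\le\on{\cdot}$, a further diagonal argument over $r$ reduces to a fixed finite-type $\ka$ taking constant value $c_{kl}$ on $S_k\times S_l$, where $S_k=[s_{k-1},s_k]$ has length $\mu_k$.

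Now the construction. Let $\tau_n$ be measure-preserving bijections of $[0,1]$ with $\cn{\ka_{A_n}-\ka^{(\tau_n)}}\to 0$ (existing by definition of $\dcut$), let $I_{n,i}=((i-1)/n,i/n]$, and set $p_{ik}^{(n)}\=n\mu(\tau_n(I_{n,i})\cap S_k)$, so $\sum_k p_{ik}^{(n)}=1$ for each $i$ and $\sum_i p_{ik}^{(n)}=n\mu_k$ for each $k$. Applying the Birkhoff--von Neumann decomposition to the scaled doubly stochastic array $(p_{ik}^{(n)}/n)$ yields an integer block assignment $k:[n]\to[r]$ with $|k\qw(\ell)|\in\{\lfloor n\mu_\ell\rfloor,\lceil n\mu_\ell\rceil\}$ which we choose to maximize the total alignment $\sum_i p_{i,k(i)}^{(n)}$. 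Let $\pi_n$ list the vertices in block order (first all $i$ with $k(i)=1$, then $k(i)=2$, and so on; the within-block order is immaterial since $\ka$ is block-constant). Set $B_n=A_n^{(\pi_n)}$ and let $V_{n,\ell}\subset[0,1]$ be the interval to which block $\ell$ is mapped; then $|V_{n,\ell}|=\mu_\ell+O(1/n)$.

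The crucial estimate is the \emph{unaligned mass} bound $D_n\=n-\sum_i p_{i,k(i)}^{(n)}=o(n)$; granting this, one modifies $\tau_n$ on a set of measure $o(1)$ to produce a block-aligned bijection $\tau_n'$ (so that $\tau_n'(I_{n,i})\subseteq S_{k(i)}$ for every $i$), and since $\ka\le M$ we have $\on{\ka^{(\tau_n)}-\ka^{(\tau_n')}}=o(1)$. Thus $\cn{\ka_{A_n}-\ka^{(\tau_n')}}\to 0$, and because $\ka^{(\tau_n')}$ takes the constant value $c_{kl}$ on each rectangle $V_{n,k}\times V_{n,l}$ after the relabeling induced by $\pi_n$, the monotonicity of $\cn{\cdot}$ under restriction to a sub-rectangle gives that $\ka_{B_n}$ restricted to $V_{n,k}\times V_{n,l}$ is $c_{kl}+o(1)$ in cut norm for every $k,l$. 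Summing the $r^2$ block contributions, together with the $O(r/n)$ boundary corrections from $|V_{n,\ell}|\neq\mu_\ell$, yields $\cn{\ka_{B_n}-\ka}\to 0$, as required. The main obstacle is establishing $D_n=o(n)$: the natural route is by contradiction, noting that a constant fractional spreading of $\tau_n$ across blocks would force $\ka^{(\tau_n)}$ to exhibit a block-level imbalance that the piecewise-constant-at-scale-$1/n$ kernel $\ka_{A_n}$ cannot track, and this must translate into a witness cut $(S,T)$ with $\bigabs{\int_{S\times T}(\ka_{A_n}-\ka^{(\tau_n)})}$ bounded below, contradicting the assumed cut-norm convergence. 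Constructing this cut explicitly from the finite-type structure of $\ka$ and the structure of the optimal assignment $k$ is the only part of the argument that is not routine.
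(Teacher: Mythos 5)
This statement is an open \textbf{conjecture} in the paper, not a theorem: the authors explicitly write that they ``can prove versions of this conjecture with various additional assumptions'' --- namely when $\ka$ is of finite type (by adapting the proof of Lemma~\ref{split}), or when both $\ka$ and the entries of all $A_n$ are uniformly bounded (by invoking the Borgs--Chayes--Lov\'asz--S\'os--Vesztergombi bound $32\delta^{1/67}$). So before anything else, you should understand that you are claiming to resolve something the authors regarded as unresolved, and your argument should be held to that standard.

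Your reduction to the bounded case is incorrect. The claim that $\sup_n\on{\ka_{A_n}-\ka_{A_n^M}}\to 0$ as $M\to\infty$ does not follow from Lemma~\ref{l_on}, and is false in general. Consider $\ka\equiv 1$ and let $A_n$ have $n^2/\log n$ entries equal to $\log n$, placed quasi-randomly (so that $\dcut(A_n,\ka)\to 0$), with all other entries zero. Lemma~\ref{l_on} is satisfied (taking $M(n)=\log n$, no entries exceed $M(n)$). But for any \emph{fixed} $M$ and $n$ large, $\on{\ka_{A_n}-\ka_{A_n^M}} = \tfrac{1}{\log n}(\log n - M) = 1 - M/\log n \to 1$, so the supremum over $n$ equals $1$ for every $M$. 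The truncation level $M(n)$ produced by Lemma~\ref{l_on} genuinely must grow with $n$, and there is no way to extract a ``for each fixed $M$'' statement from it. This is closely related to the point the paper itself makes after Theorem~\ref{th_L1}: truncation does not interact well with cut-norm closeness (and here it does not even give the $L^1$ control you need, because the mass above a fixed level need not vanish).

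Even granting that reduction, the core of your argument --- the finite-type case --- hinges on the estimate $D_n = o(n)$, which you explicitly leave open (``the only part of the argument that is not routine''). This is precisely the content of the paper's sketched partial result, where the authors describe showing that in rearrangements (almost) realizing the cut distance there is no significant splitting of vertices between the parts of $\ka$, modulo the subtlety of ``equivalent'' parts that must be merged. Your proposal does not address the equivalent-parts issue at all, and without the witness-cut argument for $D_n=o(n)$ the construction has no teeth. In short: the first reduction is false, the key estimate in the heart of the argument is unproven, and what remains is essentially the partial result the paper already credits to the finite-type case.
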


A proof of this conjecture would give a simpler reduction of
the irreducible case to the reducible one. We can prove versions
of this conjecture with various additional assumptions.
Suppose first that $\ka$ is of finite type.
Then the proof of~\refL{split} adapts easily to give the desired
rearrangements:
first show that in rearrangements (almost) realizing the cut distance, there
is no significant splitting of vertices between the parts of $\ka$
(unless two parts of $\ka$ are `equivalent', but then they may be united
into a single part). This leads eventually to a rearrangement mapping
almost every vertex to some subset of some part of $\ka$; since
$\ka$ is constant on its parts, the subset is irrelevant and may be
taken to be an interval, leading to the required $B_n$.

On the other hand, suppose that both $\ka$ and the entries of all
$A_n$ are uniformly bounded, without loss of generality by $1$.
Then approximating $\ka$ by some $n$-by-$n$ kernel, and
using
a result of Borgs, Chayes, Lov\'asz, S\'os and Vesztergombi~\cite{BCLSV:1}
that if two $n$-by-$n$ kernels bounded by $1$ are within distance
$\delta$ in the cut metric, then there are rearrangements
of the corresponding matrices that are within $32\delta^{1/67}$
in the cut norm, one can find $B_n$ with $\cn{B_n-\ka}\to 0$.

\subsection{Stability}\label{ss_stab} 

In this subsection we shall prove our stability result, Theorem~\ref{th_stab},
and deduce \refT{th_cb}.
As in~\cite{kernels}, we adapt an argument of Luczak and McDiarmid~\cite{LMcD}
showing that for $c>1$ constant, whp the giant component of $G(n,c/n)$ has
the property that if its vertex set is divided into two pieces
that are not too small, then there are many edges from one piece to the other.
We shall need the following deterministic lemma from \cite{LMcD}.

\begin{lemma}\label{LLMcD}
For any $\eps>0$, there exist $\eta_0=\eta_0(\eps)>0$ and $n_0$
such that the following holds. For all $n\ge n_0$, and for
all connected graphs $G$ with $n$ vertices, there are at most
$(1+\eps)^n$ bipartitions of $G$ with at most $\eta_0n$ cross
edges.\noproof
\end{lemma}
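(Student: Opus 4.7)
The plan is to reduce the problem to counting bipartitions of a spanning tree of $G$. Since $G$ is connected, fix a spanning tree $T\subseteq G$. If $(A,B)$ is a bipartition of $V(G)$ with at most $\eta_0 n$ cross edges in $G$, then, because every edge of $T$ is an edge of $G$, the same bipartition has at most $\eta_0 n$ cross edges in $T$. As $V(G)=V(T)$, it therefore suffices to bound the number of bipartitions of $V(T)$ with at most $\eta_0 n$ cross edges in $T$.

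For the tree $T$ on $n$ vertices, any bipartition with exactly $k$ cross edges can be described uniquely by specifying the set $S\subseteq E(T)$ of cross edges (with $|S|=k$) and, for each of the $k+1$ components of $T\setminus S$, which side of the bipartition it lies in. Hence the number of bipartitions of $V(T)$ with at most $\eta_0 n$ cross edges is at most
\[
\sum_{k=0}^{\lfloor\eta_0 n\rfloor}\binom{n-1}{k}\,2^{k+1}
\;\le\; 2\cdot 2^{\eta_0 n}\sum_{k=0}^{\lfloor\eta_0 n\rfloor}\binom{n-1}{k}.
\]
By the standard binomial tail bound, for $\eta_0\le 1/2$ one has $\sum_{k\le\eta_0(n-1)}\binom{n-1}{k}\le 2^{H(\eta_0)(n-1)}$, where $H(x)=-x\log_2 x-(1-x)\log_2(1-x)$. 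Since $\eta_0+H(\eta_0)\to 0$ as $\eta_0\to 0$, for any given $\eps>0$ one can choose $\eta_0=\eta_0(\eps)>0$ small enough that $2^{\eta_0+H(\eta_0)}\le 1+\eps/2$; then the displayed quantity is at most $(1+\eps)^n$ for all $n\ge n_0(\eps)$.

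There is no real obstacle here: the proof is purely combinatorial, and the only substantive idea is the reduction to a spanning tree, which makes the cross-edge set lie in a set of size $n-1$ rather than $|E(G)|$. The overcounting inherent in allowing non-proper $2$-colorings of the $k+1$ pieces is harmless because we only need an upper bound, and the final estimate follows from the entropy bound together with the fact that $H$ is continuous at $0$.
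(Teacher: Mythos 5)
The paper states this lemma without proof, citing Luczak and McDiarmid \cite{LMcD} (note the \verb|\noproof| tag). Your argument is correct and is essentially the standard one, matching what is done in that reference: fix a spanning tree $T$ of $G$; note that a bipartition with cut set $S\subseteq E(T)$ of size $k$ is recovered from $S$ together with a $2$-colouring of the $k+1$ components of $T\setminus S$ (each component is necessarily monochromatic, so the map from bipartitions to such pairs is injective); and then bound $\sum_{k\le\eta_0 n}\binom{n-1}{k}2^{k+1}$ via the binomial-entropy estimate. The one small blemish is a floor mismatch: you sum $k$ up to $\lfloor\eta_0 n\rfloor$ but quote the tail bound in the form $\sum_{k\le\eta_0(n-1)}\binom{n-1}{k}\le 2^{H(\eta_0)(n-1)}$, and $\lfloor\eta_0 n\rfloor$ can exceed $\eta_0(n-1)$. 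This is trivially repaired, e.g.\ by using $\binom{n-1}{k}\le\binom{n}{k}$ and the bound $\sum_{k\le\eta_0 n}\binom{n}{k}\le 2^{H(\eta_0)n}$, or by shrinking $\eta_0$ at the outset; the conclusion is unaffected.
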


Using this and \refL{l_join}, we shall prove the following
lemma, which corresponds roughly to the edge deletion case
of \refT{th_stab}.

\begin{lemma}\label{lstab}
Let $\ka$ be an irreducible kernel and $(A_n)$
a sequence of non-negative symmetric matrices such that
$\dcut(A_n,\ka)\to 0$.
For every $\eps>0$ there is a $\delta=\delta(\ka,\eps)>0$ such that, whp,
\[
 C_1(G_n')\ge (\rho(\ka)-\eps) n
\]
for every graph $G_n'$ that may be obtained from $G(A_n)$ by deleting
at most $\delta n$ edges.
\end{lemma}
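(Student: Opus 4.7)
The plan is to adapt the sprinkling argument of Luczak--McDiarmid~\cite{LMcD}, in the same spirit as the last part of the proof of \refT{th1}. Assume $\rho(\ka)>0$ (else the claim is vacuous) and fix $\eps>0$ with $\eps<\rho(\ka)/10$. By \cite[Theorem~6.4]{kernels} we have $\rho((1-\gamma)\ka)\upto\rho(\ka)$ as $\gamma\downto 0$, so I pick $\gamma=\gamma(\ka,\eps)>0$ small enough that $\rho((1-\gamma)\ka)\ge\rho(\ka)-\eps/4$. Using \refL{l_on} and \refL{GGp} I pass to the Poisson setting and write $G_n$ as the union of two independent graphs $G_n^1\stackrel{\mathrm{d}}{=}\Gp((1-\gamma)A_n)$ and $G_n^2\stackrel{\mathrm{d}}{=}\Gp(\gamma A_n)$. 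Since $(1-\gamma)A_n\to(1-\gamma)\ka$ in $\dcut$ and $(1-\gamma)\ka$ is irreducible, \refT{th1} yields $|L_1|\ge(\rho(\ka)-\eps/2)n$ whp, where $L_1$ is the giant component of $G_n^1$; I condition on $G_n^1$ and on this bound holding.

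\textbf{Reduction to a bipartition.} Suppose there exists a graph $G_n'$ obtained from $G_n$ by deleting at most $\delta n$ edges with $C_1(G_n')<(\rho(\ka)-\eps)n$, where $\delta$ is to be fixed. The intersections of the components of $G_n'$ with $L_1$ partition $L_1$ into sets $P_1,P_2,\ldots$ satisfying $|P_i|<(\rho(\ka)-\eps)n$ and $\sum_i|P_i|=|L_1|\ge(\rho(\ka)-\eps/2)n$, with no edge of $G_n'$ joining different $P_i$. A simple greedy grouping then yields a bipartition $(X,Y)$ of $L_1$ with $|X|,|Y|\ge\eps n/4$ such that every $G_n$-edge between $X$ and $Y$ lies in the deleted set; in particular at most $\delta n$ edges of the connected graph $L_1$ (on its $G_n^1$-edges) cross the cut.

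\textbf{Counting and contradiction.} Apply \refL{l_join} to the sequence $\gamma A_n$, whose limit $\gamma\ka$ is irreducible, with $\eps/4$ in place of the $\delta$ appearing there, to obtain constants $\alpha=\alpha(\ka,\eps,\gamma)>0$ and $c=c(\ka,\eps,\gamma)>0$ such that for every fixed disjoint pair $X,Y\subset[n]$ with $|X|,|Y|\ge\eps n/4$, with probability at least $1-e^{-cn}$ the graph $G_n^2$ contains $\alpha n$ vertex-disjoint (hence edge-disjoint) paths from $X$ to $Y$. Now choose $\eps''>0$ with $\log(1+\eps'')<c/2$, let $\eta_0=\eta_0(\eps'')$ be supplied by \refL{LLMcD}, and set $\delta=\min\{\alpha/2,\,\eta_0\rho(\ka)/2\}$. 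Since $L_1$ is connected, \refL{LLMcD} bounds the number of bipartitions of $L_1$ with at most $\delta n\le\eta_0|L_1|$ $L_1$-cross-edges by $(1+\eps'')^{|L_1|}\le e^{cn/2}$. Because $G_n^2$ is independent of $G_n^1$, a union bound using the $e^{-cn}$ tail from \refL{l_join} shows that whp every such bipartition admits $\alpha n$ edge-disjoint paths in $G_n^2\subseteq G_n$. Destroying all of them would require removing at least $\alpha n>\delta n$ edges, contradicting the existence of the bipartition $(X,Y)$ produced in the previous step.

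\textbf{Main obstacle.} The delicate issue is the order in which the parameters get fixed: $\gamma$ is chosen from $\eps$; $\alpha$ and $c$ then arise from applying \refL{l_join} to $\gamma\ka$; $\eps''$ must be small enough that $(1+\eps'')^n$ is swamped by $e^{cn/2}$; only after this can $\eta_0$ and finally $\delta$ be determined. One also has to verify that an at-most-$\delta n$ deletion from $G_n$ yields a bipartition in the class counted by \refL{LLMcD} (using $X,Y\subseteq L_1$, so all $G_n^1$-cross-edges between $X$ and $Y$ are $L_1$-edges) and to invoke the Menger-type fact that breaking $\alpha n$ vertex-disjoint paths requires at least $\alpha n$ edge deletions.
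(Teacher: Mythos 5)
Your proposal is correct and follows essentially the same approach as the paper: a Luczak--McDiarmid-style sprinkling in which the graph is split into a $(1-\gamma)$-part giving a large connected $L_1$ (via \refT{th1}), \refL{LLMcD} to bound the number of relevant bipartitions of $L_1$, and \refL{l_join} applied to the $\gamma$-part to show each such bipartition is whp spanned by $\Theta(n)$ vertex-disjoint paths. The differences (passing to the Poisson coupling for an exact split rather than using the inclusion $G_1\cup G_2\subseteq G_n$, the constants $\eps/4$ vs.\ $\eps/2$, and parameterizing the \refL{LLMcD} input via $\eps''$ rather than $c/2$ directly) are cosmetic and do not change the structure or validity of the argument.
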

\begin{proof}
We may assume that $\rho(\ka)>0$, as otherwise there is nothing to prove.
Reducing $\eps$ if necessary, we may and shall assume that $\eps<\rho(\ka)/10$.

Let $B_\delta$ be the `bad' event that it is possible to delete at most
$\delta n$ edges from $G_n=G(A_n)$ so that in what remains no component
contains more than $(\rho(\ka)-\eps)n$ vertices; our aim
is to show that for some constant $\delta>0$ we have $\Pr(B_\delta)\to 0$.

Suppressing the dependence on $n$, given $0<\gamma<1$, let
$G_1=G((1-\gamma) A_n)$ and $G_2=G(\gamma A_n)$. As before,
taking $G_1$ and $G_2$ independent we may assume that $G_1\cup G_2\subseteq G_n=G(A_n)$.
As noted earlier,
by \cite[Theorem 6.4]{kernels} we have $\rho((1-\gamma)\ka)\upto \rho(\ka)$
as $\gamma\to 0$. Fix $0<\gamma<1$ such that $\rho((1-\gamma)\ka)>\rho(\ka)-\eps/2$.

As in \cite{LMcD}, let $U_1$ 
denote the largest component $G_1$,
chosen according to any rule if there is a tie, and consider the event
\[
 A_1 \= \{ |U_1| \ge (\rho(\ka)-\eps/2) n \}.
\]
Since $\rho((1-\gamma)\ka)>\rho(\ka)-\eps/2$, applying \refT{th1} to
$G_1$ we see that $A_1$ holds whp.

By \refL{l_join}, applied with $\gamma\ka$ in place of $\ka$, there exist constants $\alpha>0$
and $c>0$ such that, given two disjoint sets $X$, $Y$ of vertices
of $G_2$ with $|X|,|Y|\ge \eps n/2$, we have
\begin{equation}\label{ljapp}
 \Pr( X \sim_{\alpha n} Y) \ge 1 - e^{-cn}
\end{equation}
for all large enough $n$, where $X\sim_k Y$ is the event that there are
at least $k$ vertex disjoint paths from $X$ to $Y$ in $G_2$.
Let $\eta=\eta_0(c/2)$, where $\eta_0(\cdot)$ is the function
appearing in \refL{LLMcD}, and set
\[
 \delta=\min\{(\rho(\ka)-\eps/2)\eta,\alpha/2\}.
\]

Suppose that $B=B_\delta$ and $A_1$ both hold. Then there is a set
$E$ of at most $\delta n$ edges of $G_n$ such that
in $G_n'=G_n-E$ there is no component with more than
$(\rho(\ka)-\eps)n \le |U_1|-\eps n/2$ vertices. In particular,
there is a bipartition $(X,Y)$ of $U_1$ with
$|X|$, $|Y|\ge \eps n/2$ such that there
is no path in $G_n'$ from $X$ to $Y$.
But then two conditions must hold: (i) in $G_1$ there are
at most $\delta n\le \eta |U_1|$ edges from $X$ to $Y$,
and (ii) it is possible to separate $X$ from $Y$
in $G_2$ by deleting at most $\delta n< \alpha n$
edges.

Let us condition on $G_1$, assuming that $A_1$ holds.
Then by \refL{LLMcD}, if $n$ is large enough, 
there are at most $(1+c/2)^{|U_1|}\le (1+c/2)^n\le e^{cn/2}$
bipartitions $(X,Y)$ of $U_1$ with $|X|,|Y|\ge \eps n/2$
satisfying property (i).
By \eqref{ljapp}, for each of these bipartitions
the probability that it
has property (ii) is at most $e^{-cn}$.
It follows that $\Pr(B\cap A_1) \le e^{cn/2}e^{-cn}=o(1)$.
Since $A_1$ holds whp, we thus have $\Pr(B)=o(1)$, as required.
\end{proof}

To handle the deletion of vertices rather than edges we simply
show that whp all small sets of vertices meet few edges.

\begin{lemma}\label{onon}
Let $\ka$ be a kernel and $\delta>0$ a real number. Then
there is a $\gamma>0$ such that, if $(A_n)$
a sequence of non-negative symmetric matrices with
$\dcut(A_n,\ka)\to 0$, then whp every set
of at most $\gamma n$ vertices of $G(A_n)$ meets
at most $\delta n$ edges.
\end{lemma}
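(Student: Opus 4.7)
The plan has three steps: (i) truncate large entries of $A_n$ so that all edge probabilities are $o(1)$, (ii) bound uniformly the expected number of edges meeting any set of at most $\gamma n$ vertices using \refL{L0} together with absolute continuity of the Lebesgue integral, and (iii) pass from expectation to a whp statement via Chernoff plus a union bound over subsets.

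For step (i), \refL{l_on} supplies $M(n)=o(n)$ such that at most $o(n)$ entries of $A_n$ exceed $M(n)$. Let $\tilde A_n$ be the pointwise minimum of $A_n$ and $M(n)$; the remark after \refL{l_on} gives $\dcut(\tilde A_n,\ka)\to 0$. Couple so that $G(\tilde A_n)\subseteq G(A_n)$. The edges of $G(A_n)$ not in $G(\tilde A_n)$ lie among $o(n)$ vertex pairs, so they number at most $o(n)<\delta n/2$ for large $n$. Thus we may replace $A_n$ by $\tilde A_n$ and $\delta$ by $\delta/2$, and henceforth assume $\max a_{ij}\le M(n)=o(n)$.

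For step (ii), choose measure-preserving $\tau_n:\oi\to\oi$ with $\cn{\ka_{A_n}-\karn}\to 0$. For $S\subseteq[n]$ with $|S|\le\gamma n$, set $T=\bigcup_{i\in S}((i-1)/n,i/n]$, so $\mu(T)\le\gamma$, and write $X_S$ for the number of edges of $G(A_n)$ meeting $S$. Since $X_S\le\sum_{i\in S}\deg(i)$ and $\E\deg(i)\le\tfrac{1}{n}\sum_j a_{ij}$, summing yields $\E X_S\le n\int_T\la_{\ka_{A_n}}$. The marginal of $\karn$ equals $\la_\ka\circ\tau_n$, so \refL{L0} and a measure-preserving change of variable give
\[
 \int_T\la_{\ka_{A_n}}
 \le \int_T \la_{\karn}+\cn{\ka_{A_n}-\karn}
 = \int_{\tau_n(T)}\la_\ka+o(1),
\]
uniformly in $S$. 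Since $\mu(\tau_n(T))=\mu(T)\le\gamma$, the first term is at most $\int_0^\gamma\la_\ka^*(t)\dd t$, where $\la_\ka^*$ denotes the decreasing rearrangement of $\la_\ka$. Because $\la_\ka\in L^1(\oi)$ (since $\int\la_\ka=\int\ka<\infty$), absolute continuity of the Lebesgue integral yields $\gamma_0=\gamma_0(\ka,\delta)>0$ with $\int_0^{\gamma_0}\la_\ka^*\le\delta/8$, whence $\E X_S\le\delta n/4$ uniformly over $|S|\le\gamma_0 n$ for $n$ large.

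For step (iii), $X_S$ is a sum of independent Bernoullis, so a standard Chernoff bound gives $\Pr(X_S\ge\delta n/2)\le e^{-c\delta n}$ for some absolute $c>0$. A standard entropy estimate bounds the number of subsets of $[n]$ of size at most $\gamma n$ by $e^{h(\gamma)n}$, with $h(\gamma)\to 0$ as $\gamma\to 0$. Shrinking $\gamma$ further so that $h(\gamma)<c\delta/2$, a union bound shows that whp $X_S<\delta n/2$ for every such $S$, which together with the $o(n)$ edges discarded in step (i) proves the lemma. The main obstacle is the uniform expectation bound in step (ii): because $\ka$ is not assumed bounded, the marginal $\la_\ka$ may be large on small sets, and we have access to $\ka_{A_n}$ only through the cut norm; the argument must therefore leverage \refL{L0}, the measure-preservation of $\tau_n$, and absolute continuity of $\int\la_\ka$ simultaneously to obtain a bound valid for every small $T$ at once.
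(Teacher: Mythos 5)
Your proof is correct and follows essentially the same route as the paper: bound the expected edge count meeting any small set via the cut distance to $\ka$ plus integrability of $\ka$, then apply Chernoff and a union bound with $\gamma$ small enough that the entropy term is dominated. The only substantive difference is your step (i), which is unnecessary: Chernoff applies to any sum of independent Bernoullis regardless of the individual probabilities, and $\nu(U)\le\frac1n\sum_{i\in U}\sum_j a_{ij}$ holds whether or not some $a_{ij}/n$ exceed $1$, so the truncation via \refL{l_on} buys you nothing (the paper goes directly). Also, your detour through \refL{L0} is just an equivalent packaging of the direct observation that $\int_{T\times\sss}(\ka_{A_n}-\karn)\le\cn{\ka_{A_n}-\karn}$, and your $\int_0^\gamma\la_\ka^*$ is exactly the paper's $f(\gamma)$; both are fine.
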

\begin{proof}
For $0<\gamma<1$ let $f(\ga)=\sup \int_{A\times [0,1]} \ka(x,y)\dd\mu(x)\dd\mu(y)$,
where the supremum is over all subsets $A$ of $[0,1]$ with $\mu(A) \le \gamma$.
Since $\ka$ is integrable, we have $f(\gamma)\to 0$ as $\gamma\to 0$,
and there is some $\gamma_0$ with $f(\gamma_0)<\delta/4$.
Let us fix $\gamma\le \gamma_0$ chosen small enough that $(e/\gamma)^\gamma \le e^{\delta/20}$, say.

Given a set $U$ of vertices of $G_n=G(A_n)$, let $\nu(U)$ denote the expectation
of the sum of the degrees of the vertices in $U$.
If $|U|\le\gamma n$, then from the definition of the cut metric we have
\[
 \nu(U)/n \le f(\gamma)+\dcut(A_n,\ka),
\]
so for $n$ large enough we have $\nu(U)\le \delta n/2$ for all such $U$.
The number of edges incident with $U$ has expectation at most $\nu(U)$,
and is a sum of independent indicator variables. It follows from the Chernoff
bounds that the probability that a given $U$ meets at least $\delta n$
edges is at most $e^{-\delta n/10}$, say.
Since there are at most $\binom{n}{\gamma n} \le (e/\gamma)^{\gamma n}\le e^{\delta n/20}$
choices for $U$ with $|U|=\floor{\gamma n}$, the result follows.
\end{proof}

We are now ready to prove \refT{th_stab}.
\begin{proof}[Proof of \refT{th_stab}]
Recall that $G_n'$ will be obtained from $G_n=G(A_n)$ by deleting at most $\delta n$ vertices, and then
adding and deleting at most $\delta n$ edges. Considering when
$C_1(G_n')$ is maximized or minimized, it clearly suffices to prove
that if $\delta$ is chosen small enough,
then whp $C_1(G_n')\ge (\rho(\ka)-\eps) n$ for all such $G_n'$
obtained by deletion only, and that whp $C_1(G_n')\le (\rho(\ka)+\eps) n$ for
such $G_n'$ obtained by adding edges to $G_n$.

The first statement is immediate from Lemmas~\ref{lstab} and~\ref{onon}
as in~\cite{kernels}; we omit the simple details.

The second statement follows easily Lemma~\ref{l_Nk2}; the argument
is identical to that in~\cite{kernels}. Simply choose
$k$ such that $\sum_{k'\le k}\rho_{k'}(\ka)\ge 1-\rho(\ka)-\eps/3$;
then by Lemma~\ref{l_Nk2} there are whp at least $(1-\rho(\ka)-\eps/2)n$
vertices of $G_n$ in components of size at most $k$.
Set $\delta=\eps/(4k)$, and note that adding at most $\delta n$
edges changes the number of vertices in components of size at most
$k$ by at most $2k\delta n=\eps n/2$.
\end{proof}

We now turn to the proof of Theorem~\ref{th_cb}, giving
exponential tail bounds on the size of $C_1(G_n)$.
\begin{proof}[Proof of Theorem~\ref{th_cb}]
In proving the lower bound on $C_1(G_n)$, we may assume that $\eps<\rho(\ka)$,
and in particular that $\rho(\ka)>0$.
Given a graph $G$, let $D=D(G)$ be the minimal
$d$ such that it is possible to delete $d$ vertices
from $G$ to obtain a graph $G'$ with $C_1(G')\le (\rho(\ka)-\eps)n$.
Note that if $G_1$ and $G_2$ differ only in the set of edges
incident with some vertex $v$, then $|D(G_1)-D(G_2)|\le 1$.
\refT{th_stab} implies that for some $\delta>0$ we have $\E D(G_n) \ge \delta n$ for 
all large enough $n$.
Constructing $G_n$ by making $n$ independent choices,
where the $i$th choice is the set of edges $ji$, $j<i$,
it follows from McDiarmid's inequality~\cite{McD}
that
\begin{equation}\label{eclb}
 \Pr\bb{C_1(G_n)\le (\rho(\ka)-\eps)n} = \Pr(D(G_n)=0) \le e^{-2(\delta n)^2/n} = e^{-2\delta^2n}.
\end{equation}
(Of course, one can instead use the Hoeffding--Azuma inequality, in which case
the factor two in the exponent is in the denominator.)

Turning to the upper bounds on $C_1(G_n)$ and $C_2(G_n)$,
fix $k\ge 1$ with $\rho_{\le k}(\ka)=\sum_{k'\le k}\rho_{k'}(\ka)\ge 1-\rho(\ka)-\eps/4$,
and consider $N_n=N_{\le k}(G_n)$.
We have $\E N_n/n\to \rho_{\le k}(\ka)$ by \refL{l_Nk},
so for $n$ large enough we have $\E N_n\ge (1-\rho(\ka)-\eps/3)n$.
We shall show that
\begin{equation}\label{Nc}
 \Pr\bb{ |N_n-\E N_n| \ge \eps n/2 } \le e^{-\gamma n}
\end{equation}
for some $\gamma>0$; then, for $n$ large enough,
\begin{eqnarray*}
 \Pr\bb{C_1(G_n)+C_2(G_n)\ge (\rho(\ka)+\eps)n}
 &\le& \Pr\bb{N_{>k}(G_n) +2k \ge (\rho(\ka)+\eps)n} \\
 &\le& \Pr\bb{N_n\le \E N_n-\eps n/2} \le e^{-\gamma n}.
\end{eqnarray*}
Together with \eqref{eclb} this gives the required bounds
on $C_1(G_n)$. For the bound on $C_2(G_n)$, we use
\eqref{eclb} to bound $C_1(G_n)$ from below, and replace $\eps$ by $\eps/2$.

In our proof of \eqref{Nc}
the key point is that $N_{\le k}(G)$ is edge-Lipschitz:
if $G$ and $G'$ differ in one edge,  then $|N_{\le k}(G)-N_{\le k}(G')|\le 2k$.
To prove concentration, we apply Talagrand's inequality~\cite{Talagrand}
in the form of \cite[Theorem 2.29]{JLR}. With $N=\binom{n}{2}$,
the independent variables $Z_1,\ldots,Z_N$ are the indicator
functions of the events that the individual edges are present.
Let $f(G_n)=f(Z_1,\ldots,Z_N)=n-N_n=N_{>k}(G_n)$.
Then changing one $Z_i$ changes $N_n$, and hence $f$, by at most $c_i=2k$.
Whenever $f(G_n)\ge r$, then taking (the edge set of) one spanning tree
for each component of size greater than $k$, there is a certificate
of size at most $n$ for the event that $f(G_n)\ge r$.
Hence we may take $\psi(r)=(2k)^2 n$ for all $r$, and Talagrand's inequality
gives
\[
 \Pr(|f(G_n) - m| \ge t) \le 4e^{-t^2/(16k^2n)},
\]
where $m$ is the median value of $f(G_n)$. As usual (see, e.g., \cite{JLR}),
it then follows that the mean and median are close (within $O(\sqrt n)$),
and recalling that $N_n=n-f(G_n)$, for $n$ large enough
we obtain \eqref{Nc} with $\gamma=\eps^2/(70k^2)$, say.
\end{proof}

\section{Extension to hypergraphs}\label{sec_hyp}

In this section we shall prove an extension of Theorems~\ref{th1}
and~\ref{th2} to hypergraphs. Alternatively, this may be thought of as an extension
of the random graph model with clustering introduced in~\cite{clustering}.
Most of our arguments are simple modifications of those in previous sections, so 
we shall only outline them. There are one or two places where adapting the
proof is not so easy, and there we shall give more detail.

Let $(\sss,\mu)$ be a probability space. We write $\cwr$ for the set of all
integrable non-negative functions $W:\sss^r\to [0,\infty)$,
and $\cwrs$ for the subset of such functions that are symmetric under permutations
of the coordinates. Often we shall call a function $\ka_r\in \cwrs$ an
{\em $r$-kernel}. A {\em hyperkernel} $\kf$ is simply a sequence $(\ka_r)_{r\ge 2}$,
where $\ka_r$ is an $r$-kernel. The {\em integral} $i(\kf)$ of a hyperkernel
is defined to be 
\[
 i(\kf) = \sum_{r\ge 2} r \int_{\sss^r} \ka_r,
\]
and a hyperkernel $\kf$ is {\em integrable} if $i(\kf)<\infty$.

The cut norm has a natural extension to $r$-kernels or indeed to $L^1(\sss^r)\supset \cwr$.
As before, we consider two slightly different definitions: for $W\in L^1(\sss^r)$ set
\begin{equation}\label{cnr1} 
  \cnone W \= \sup_{S_1,\ldots,S_r}
  \Bigabs{\int_{S_1\times\cdots\times S_r} W(x_1,\ldots,x_r)},
\end{equation}
where the supremum is over all $r$-tuples of measurable subsets of $\sss$.

Alternatively, we may consider
\begin{equation}\label{cnr2} 
  \cntwo W \= \sup_{\sn{f_1},\cdots,\sn{f_r}\le1}
  \Bigabs{\int_{\sss^r} f_1(x_1)\cdots f_r(x_r) W(x_1,\ldots,x_r)}.
\end{equation}
Much of the time it makes no difference which version of $\cn{\cdot}$ we consider:
as before, in the supremum in \eqref{cnr2} we may assume that each $f_i$ is a $\pm 1$ function,
and we see that
\[
 \cnone{W} \le \cntwo{W} \le 2^r \cnone{W}.
\]
While \eqref{cnr2} is the more natural definition from the point of view of functional
analysis, we shall in fact take \eqref{cnr1} as the definition for most of this section,
writing $\cn{W}$ for $\cnone{W}$ -- 
it turns out that we obtain a very slightly stronger result this way.

Given a family $\uW=(W_r)_{r\ge 2}$ with $W_r\in \cwr$, set
\[
 i(\uW) = \sum_{r\ge 2} r\int_{\sss^r} W_r,
\]
\[
 \on{\uW} = \sum_{r\ge 2} r \on{W_r},
\]
and
\begin{equation}\label{cnrs}
 \cn{\uW} = \sum_{r\ge 2} r \cn{W_r},
\end{equation}
where $\cn{\cdot}=\cnone{\cdot}$.
The reason for the factors of $r$ above will become clear shortly.

Note that while considering a single value
of $r$, it is irrelevant whether we use $\cntwo{\cdot}$ or $\cnone{\cdot}$.
However, as soon as we sum cut norms for different $r$, the potential factor of up to
$2^r$ may make a difference. All our results will apply using $\cntwo{\cdot}$
instead of $\cnone{\cdot}$, but they would then be slightly weaker, as fewer
sequences of hyperkernels converge in the resulting norm.

Note that for $W\in L^1(\sss^r)$ we trivially have
\[
 \Bigabs{\int_{\sss^r} W} \le \cn{W} \le \on{W},
\]
so
\[
 |i(\uW)| \le \cn{\uW} \le \on{\uW}.
\]
As in~\cite{clustering}, the quantity $i(\uW)$ will play a key role in various
approximation arguments; the inequality $|i(\uW)|\le \cn{\uW}$ is key
to making these arguments work here.

Given a hyperkernel $\kf$ and a measure-preserving bijection $\tau:\sss\to\sss$,
let $\kf^\ta=(\ka^\ta_r)_{r\ge 2}$ be the hyperkernel defined by
\[
 \ka^\ta_r(x_1,\ldots,x_r) = \ka_r(\tau(x_1),\ldots,\tau(x_r)).
\]
We call a $\kf^\ta$ a {\em rearrangement} of $\kf$, and write
$\kf'\sim\kf$ if $\kf'$ is a rearrangement of $\kf$.
The cut metric extends to hyperkernels on $\oi$ as follows:
\[
 \dcut(\kf,\kf') = \inf_{\kf''\sim \kf'} \cn{\kf-\kf''}.
\]
For hyperkernels on general probability spaces, which need not be
the same, we use couplings to define $\dcut$.

Turning to graphs, our next aim is to define an extension of the
random graph $G(A_n)$.

By an {\em $n$-by-$n$ hypermatrix} $H_n$ we mean a sequence $(H_{n,r})_{r\ge 2}$ where
each $H_{n,r}$ is an $r$-dimensional array with 
entries $\hir\ge 0$, $1\le i_1,\ldots,i_r\le n$,
that is symmetric under all permutations of the coordinates.
There is a hyperkernel $\kf=\kf(H_n)=(\ka_r)_{r\ge 2}$
naturally associated to a hypermatrix $H_n$:
each $\ka_r$ is a piecewise constant function
on $[0,1]^r$ whose value on a certain hypercube of side $1/n$ is given by the appropriate
entry of $H_{n,r}$. 

Turning to the random hypergraph, as in~\cite{clustering}, the natural normalization
in the hypergraph case is unfortunately not the same as in the graph case.
Roughly speaking, for each entry $\hir$ of each $H_{n,r}$, we shall add
a hyperedge on the corresponding vertices to our hypergraph
with probability $\hir/n^{r-1}$.
Unfortunately this means that the probability that a particular
$r$-vertex hyperedge is present is then (roughly) $r!\hir/n^{r-1}$, and in particular
$2h_{ij}/n$ in the graph case.

Formally, given a hypermatrix $H_n$, let $\HH(H_n)$ be the random hypergraph
on $[n]$ in which edges are present independently,
and for any $2\le r\le n$ and $i_1<i_2<\cdots<i_r$, the probability
that the hyperedge $i_1i_2\cdots i_r$ is present is
\[
 \min\{ r! \hir/n^{r-1} , 1\}.
\]
Alternatively, it is often to convenient to consider the {\em Poisson multi-hypergraph}
version of $\HH(H_n)$: here the number of copies of a hyperedge
$i_1i_2\cdots i_r$ is simply Poisson with mean $r! \hir/n^{r-1}$, and these numbers
are independent for different hyperedges.

Turning to the graph, let $G(H_n)$ be the simple graph underlying $\HH(H_n)$,
obtained by replacing each $r$-vertex hyperedge by a complete graph
on $r$ vertices, and replacing any multiple edges by single edges.
In the Poisson multi-hypergraph variant, we keep multiple edges.

\begin{remark}\label{Rdiag}
We call an entry $\hir$ of some $H_{n,r}$ {\em diagonal} if
$i_k=i_\ell$ for some $k\ne \ell$. Note that in the definitions
of $\HH(H_n)$ and $G(H_n)$, such entries play no role. We shall see later
that, as in the graph case, convergence of $(H_n)$ to $\kf$ in $\dcut$
is unaffected by setting all diagonal entries to $0$, so (once
we have shown this), we may assume without loss of generality
that all diagonal entries are $0$. However, we do {\em not}
impose this as a condition of our results, since there is no need to do so.
\end{remark}

Given a hyperkernel $\kf$,
let $\bp_\kf$ be the compound Poisson Galton--Watson branching process
associated to $\kf$; for the formal definition see~\cite{clustering}.
We write $\rho(\kf)$ for the survival probability of $\bp_\kf$.

As in~\cite{clustering},
let $\kae$ be the {\em edge kernel} corresponding to $\kf=(\ka_r)$, defined by
\begin{equation}\label{ce}
  \kae(x,y) = \sum_{r\ge 2}r(r-1)  \int_{\sss^{r-2}}\ka_r(x,y,x_3,x_4,\ldots,x_r)
   \dd\mu(x_3)\cdots\dd\mu(x_r).
\end{equation}
Note that $\kae$ may be viewed as a (rescaled) 2-dimensional marginal
of the hyperkernel $\kf$.
As in~\cite{clustering}, a hyperkernel $\kf$ is {\em irreducible}
if the corresponding edge kernel is irreducible.
The natural extension of \refT{th1} to hyperkernels is as follows.

\begin{theorem}\label{th1h}
Let $\kf$ be an irreducible, integrable hyperkernel and $(H_n)$
a sequence of hypermatrices such that $\dcut(H_n,\kf)\to 0$.
Then $C_1(G(H_n))/n \pto \rho(\kf)$, and $C_2(G(H_n))=\op(n)$.
\end{theorem}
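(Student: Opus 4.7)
The plan is to follow the four-step strategy used for \refT{th1}, adapting each ingredient to the hypergraph setting. I would work with the Poisson multi-hypergraph variant of $\HH(H_n)$, which has the same underlying simple graph as $G(H_n)$ but makes different hyperedges independent. The first step is to extend \refL{l_on} to hypermatrices: if $\dcut(H_n,\kf)\to 0$ then for each fixed $r$ there is an $M_r(n)=o(n^{r-1})$ such that only $o(n)$ entries of $H_{n,r}$ exceed $M_r(n)$ and the total mass in those heavy entries is $o(n^{r-1})$; the proof mimics that of \refL{l_on}, using matchings in the $r$-uniform hypergraph of heavy entries together with integrability of $\ka_r$. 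Combined with a truncation of $\sum_r r\,\ka_r$ at some finite $R$ (justified by $i(\kf)<\infty$), this reduces us to the case where all hyperedge probabilities in $\HH(H_n)$ are uniformly $o(1)$; the factor $r$ in \eqref{cnrs} is exactly what makes these two truncations compatible with cut convergence.

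Next, for each finite hypertree $F$ I would define an integral $\tx(F,\kf)$ analogous to \eqref{tx}, attaching to each vertex $k$ an exponential weight of the form $e^{-\gl_{\kae}(x_k)}$ coming from the marginal of the edge kernel. The hypergraph analog of \refT{th_FW} asserts that $\tx(F,\cdot)$ is bounded and Lipschitz in the cut norm \eqref{cnrs}; this is proved exactly as in \refSS{ss_ti}, first by establishing Lipschitz continuity of the reweighted $r$-kernel $W\mapsto W\ab$ for each $r\ge 2$ via a telescoping decomposition in the spirit of \eqref{part1}, then by bounding rooted hypertree integrals by induction on the number of hyperedges. With this in hand the analog of \refL{l_Nk} follows: the expected number of vertices in tree-like components of $G(H_n)$ realizing a given hypertree $F$ converges to $n\tx(F,\kf)/\aut(F)$, cyclic contributions are negligible (via an obvious extension of \refL{l_Nkp}), and summing over hypertrees identifies the total with $n\rho_k(\kf)+o(n)$. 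A second-moment argument as in \refL{l_Nk2} and the proof of \refC{c_big} then yield $N_{\ge\omega}(G(H_n))/n\pto\rho(\kf)$ for some $\omega=\omega(n)\to\infty$.

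For the connection step I would reduce to the graph case through the edge kernel. Splitting $\kf$ into two independent pieces scaled by $1-\gamma$ and $\gamma$, I would realize $G(H_n)$ as the union of two independent hypergraph samples. In the $\gamma$-piece, any two vertices lying in a common hyperedge are automatically joined in $G$; summing over $r$, the induced pair-edge probability between $i$ and $j$ is at least $\gamma(1-o(1))a_{ij}/n$, where $A_n$ is a symmetric matrix with $\dcut(A_n,\gamma\kae)\to 0$. Since $\kae$ is irreducible by hypothesis, \refL{l_join} applied to the graph $G(A_n)$, which is dominated by $G(H_n)$, gives that any two disjoint vertex sets of size $\ge\delta n$ are joined by $\Theta(n)$ vertex-disjoint paths with probability $1-e^{-\Theta(n)}$. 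Combined with the preceding paragraph, the sprinkling argument of \refSS{ss_conn} then upgrades \emph{most vertices in large components} to \emph{most vertices in one component}, yielding both $C_2(G(H_n))=\op(n)$ and $C_1(G(H_n))/n\pto\rho(\kf)$.

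The main obstacle will be the tree-integral Lipschitz bound above. In the graph case, removing a leaf edge integrates out one endpoint against $e^{-\glw}$ to gain a factor bounded by $e^{-1}$. For a hyperedge of arity $r$, removing a leaf hyperedge requires integrating out $r-1$ vertex variables simultaneously, yet the exponential weight attached to each must still come from a one-dimensional marginal if the bound $te^{-t}\le e^{-1}$ is to apply. Distributing the exponential factors correctly across all hyperedges incident to each vertex, with exponents summing to $1$ as in \eqref{txty}, and then verifying that the resulting reweighted $r$-kernels are Lipschitz in the $r$-dimensional cut norm \eqref{cnr1}, is the technical heart of the argument; the weighting $r\,\cn{W_r}$ in \eqref{cnrs} should be exactly what is needed to absorb the combinatorial cost.
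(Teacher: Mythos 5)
The small-components part of your proposal is essentially sound, modulo one normalization slip: the exponential factor at each vertex in $\tx(F,\kf)$ should be $e^{-\la_\kf(x_k)}$ with $\la_\kf(x)=\sum_r r\,\la_{\ka_r}(x)$ (the expected number of hyperedges at a vertex), not $e^{-\gl_{\kae}(x_k)}$ (which has weight $r(r-1)$ and is the expected degree). Using the latter miscounts the isolation probability. The reweighting across hyperedges and the Lipschitz bound then also have a wrinkle you do not mention: the exponent $\la_\kf(x)/d_i$ attached to a hyperedge of arity $r$ involves \emph{all} the $\ka_s$, not just $\ka_r$, so the reweighted $W_E$ is not simply $\ka_r^{\ba}$; one must peel off the foreign marginals as separate $L^1$-continuous factors. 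These are fixable.

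The genuine gap is in the connection step. You propose to observe that the $\gamma$-piece induces a pair-edge probability $\approx\gamma a_{ij}/n$ between $i$ and $j$, and to apply \refL{l_join} to ``the graph $G(A_n)$, which is dominated by $G(H_n)$''. But the edges induced in $G(H_n)$ by hyperedges are \emph{not} independent: a single $r$-hyperedge simultaneously creates all $\binom{r}{2}$ pairs inside it. \refL{l_join} (and its proof, via disjoint-paths sprinkling across a fixed partition) is a statement about the Bernoulli model $G(\cdot)$ with independent edges, and there is no obvious coupling that embeds such an independent-edge graph with the full edge probabilities $a_{ij}/n$ inside $G(H_n)$. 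The paper's fix is to select, from each hyperedge of arity $r$ in the $\gamma$-piece, one of its $\binom{r}{2}$ edges uniformly at random; by the Poisson splitting property the resulting graph is exactly $G(\gamma B_n)$ with $B_n=2\sum_{r}A_{n,r}$, a genuine independent-edge model. This also quietly resolves a second problem you would hit: the limiting kernel of $B_n$ is the rescaled edge kernel $\tau$ with factor $2$ rather than $r(r-1)$, and since $2\le r$ one gets $\dcut(B_n,\tau)\to0$ directly from $\dcut(H_n,\kf)\to0$ without any truncation in $r$; your route, which aims at $\dcut(A_n,\gamma\kae)\to0$ with the $r(r-1)$-weighted $\kae$, does not follow from cut convergence of $\kf(H_n)$ for unbounded hyperkernels, precisely because the norm \eqref{cnrs} only carries a factor $r$. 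Irreducibility of $\tau$ is then immediate since $\tau$ and $\kae$ vanish on the same set.
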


Arguing as in the proof of \refL{l_iid},
one can show that \refT{th1h} extends the
corresponding result of~\cite{clustering}.

In \refT{th1h} we define $\dcut$ using $\cnone{\cdot}$ for the cut norm. 
Since $\cnone{\cdot}\le \cntwo{\cdot}$, the corresponding result
for the more natural definition using $\cntwo{\cdot}$ follows immediately.

The heart of the proof of \refT{th1h} will be \refL{lh1} below, showing
that under an additional assumption, the number of vertices in components
of each fixed size is `what it should be'. Later we shall first
remove the additional assumption, and then pass from `large' components
to a single giant component.

We say that a hyperkernel $\kf=(\ka_r)$ is {\em $R$-bounded}
if $\ka_r$ is zero
for $r>R$, in which case we shall often speak of the hyperkernel $\kf=(\ka_r)_{r=2}^R$.
Correspondingly, a hypermatrix $H_n=(H_{n,r})_{r\ge 2}$ is {\em $R$-bounded}
if $H_{n,r}$ is the zero matrix for $r>R$.

As in~\cite{clustering}, we write $\rho_k(\kf)$ for the probability
that the branching process $\bp_{\kf}$ consists of $k$ particles in total.
Recall that $N_k(G)$ denotes the number of vertices of a graph
$G$ in components of order $k$.

\begin{lemma}\label{lh1}
Let $R\ge 2$ be fixed.
Suppose that $\kf$ is an $R$-bounded hyperkernel and $(H_n)$ is
a sequence of $R$-bounded hypermatrices such that $\dcut(H_n,\kf)\to 0$.
Then for each $k\ge 1$ we have
$N_k(G(H_n))/n\pto \rho_k(\kf)$.
\end{lemma}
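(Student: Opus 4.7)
The plan is to adapt the proof of Lemma~\ref{l_Nk} to the hypergraph setting, working with the Poisson multi-hypergraph variant throughout and transferring back via an analog of Lemma~\ref{GGp}. Since we are assuming the $R$-bounded setting, only finitely many kernel components $\ka_2,\ldots,\ka_R$ contribute, and $\dcut$-convergence of $H_n$ to $\kf$ is, by definition \eqref{cnrs}, $\dcut$-convergence of $H_{n,r}$ to $\ka_r$ for each $r$.

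First I would establish an $R$-bounded analog of Lemma~\ref{l_on}: for each $r\le R$, almost all of the weight of $H_{n,r}$ lies in entries of size $o(n^{r-1})$. Zeroing out the large entries and all diagonal entries $h_{i_1\ldots i_r}$ (those with a repeated index) changes each $H_{n,r}$ by $o(1)$ in $\cnone{\cdot}$ and hence in $\dcut$, and changes the expected number of hyperedges of each size by $o(n)$, which affects $N_k$ by at most $O(o(n))$. So we may assume $H_n$ is \emph{well-behaved}: all edge probabilities $r!h_{i_1\ldots i_r}/n^{r-1}$ are $o(1)$ and all diagonal entries vanish. Next, following the Poisson-process evolution argument of Lemma~\ref{l_Nkp}, I would show that in well-behaved hypermatrices the expected number of vertices of $G(H_n)$ in components of bounded size containing a ``cycle'' (i.e.\ in components where the underlying sub-hypergraph has $\sum_{e}(|e|-1) > k-1$) is $o(n)$; the inductive estimate on the rate of cycle creation is essentially identical to the graph case, using well-behavedness and $R<\infty$ to bound the instantaneous rate by $o(n\,\dt)$.

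The core contribution therefore comes from \emph{hypertree} components. For a hypertree $F$ on $[k]$ with hyperedges $e\in E(F)$, the expected number of copies in $\Gpm(H_n)$ that form an isolated component satisfies
\[
 \E n_F(\Gpm(H_n)) = o(n) + (1+o(1)) \frac{n}{\aut(F)}\,\tx(F,\kf_{H_n}),
\]
where, writing $\gl_\kf(x) = \sum_r r\int \ka_r(x,y_2,\ldots,y_r)\dd\mu(y_2)\cdots\dd\mu(y_r)$ for the vertex-marginal,
\[
 \tx(F,\kf) \= \int_{\sss^k} \prod_{e\in E(F)}\ka_{|e|}(\mathbf{x}_e)\prod_{v=1}^k e^{-\gl_\kf(x_v)}\dd\mu(x_1)\cdots\dd\mu(x_k).
\]
Here each hyperedge contributes the factor $r!/n^{r-1}$ from its edge probability and the combinatorial counting of ordered vertex labellings yields the $1/\aut(F)$. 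Diagonal coincidences are handled exactly as in the proof of Lemma~\ref{l_Nk}: each pattern of identifications produces a connected sub-hypergraph which is either a hypertree (contributing $O(n)$, controlled by the simple graph case) or contains a cycle/repeated edge (then the cyclic-component estimate above, together with the observation that multi-edges cost an extra $o(1)$ factor per repetition, gives $o(n)$).

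The hard part is the hypergraph analog of Theorem~\ref{th_FW}: the map $\kf\mapsto \tx(F,\kf)$ is Lipschitz in the cut norm $\cnone{\cdot}$ for any hypertree $F$. I would split the exponential factors evenly among the incident hyperedges, writing $e^{-\gl_\kf(x_v)} = \prod_{e\ni v} e^{-\gl_\kf(x_v)/d_v}$ where $d_v$ is the degree of $v$ in $F$, absorb these into modified asymmetric hyperedge kernels $\ka_{|e|}^{(a_1,\ldots,a_{|e|})}$ in analogy with \eqref{Wabdef}, and prove the multivariate analog of Lemma~\ref{L1}: this asymmetric weighting is a bounded Lipschitz map on each $\cwr$ in the cut norm, with bounded marginals, and controls only the correct vertex via the mean value theorem combined with the case split on which of $\gl_{\ka_r^{(1)}}$, $\gl_{\ka_r^{(2)}}$ is larger. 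Then a rooted-hypertree induction (peeling leaves, integrating out a leaf-vertex against a bounded marginal, as in Lemmas~\ref{L2root} and~\ref{L2}) gives the Lipschitz estimate; the use of $\cnone{\cdot}$ rather than $\cntwo{\cdot}$ is what allows us to apply the cut-norm bound simultaneously to the $|e|$ vertex-variables of a hyperedge and the rooted-subtree values, since these enter as product test functions in \eqref{cnr1}. The main obstacle is the telescoping/case-split bookkeeping for the asymmetric multivariate weights, since each vertex now carries contributions from many hyperedges (of possibly different sizes $r\le R$); the weighting by $r$ in \eqref{cnrs} is crucial for the final estimate. Once this Lipschitz property is in hand, $\tx(F,\kf_{H_n})\to \tx(F,\kf)$, summing $k\sum_F \tx(F,\kf)/\aut(F)$ over hypertrees on $[k]$ yields $\rho_k(\kf)$ (by induction on $k$, as in \eqref{xk=t}, now using the compound Poisson offspring law), and Step~6 from the graph case—second-moment concentration as in Lemma~\ref{l_Nk2}, applied to pairs of components of order $k$—upgrades the first-moment statement to convergence in probability.
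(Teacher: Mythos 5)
Your overall architecture matches the paper's (reduce to a well‑behaved model, bound cyclic components, establish a cut‑norm‑Lipschitz hypertree integral, then second‑moment concentration), and your reading of the hypertree‑integral step — including the subtlety that $e^{-\la_{\kf}(x_v)}$ mixes marginals of all $\ka_s$, not just $\ka_{|e|}$ — is on target. But there is a concrete gap in the first reduction step.

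You propose to prove a direct $r$-dimensional analogue of Lemma~\ref{l_on} for each $H_{n,r}$, giving a per‑entry bound ($h_{i_1\ldots i_r}=o(n^{r-1})$), and then to define well‑behavedness by this per‑entry condition. That condition is too weak for the cyclic‑components argument and for the step $p_{\bv}\sim p^0_{\bv}$. Both require that for \emph{every} fixed pair $\{i,j\}$ the total expected number of hyperedges containing both $i$ and $j$ is $o(1)$, i.e.\ that the two‑dimensional marginal matrix $A_n$ (with $a_{ij}=\sum_r r(r-1)\,n^{-(r-2)}\sum_{i_3,\ldots,i_r}h_{iji_3\ldots i_r}$) satisfies $\max_{i,j}a_{ij}=o(n)$. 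A per‑entry bound $h_E=o(n^{r-1})$ does not control this sum: already for $r=3$, taking $h_{12k}\approx n$ for all $k$ gives every entry $o(n^{2})$ yet $a_{12}\approx n$, while contributing only $O(1/n)$ to $\on{\ka_{H_{n,3}}}$ and hence not obstructing $\dcut$-convergence. The paper circumvents this by first passing to the marginal matrix $A_n$ (so that the original two‑dimensional Lemma~\ref{l_on} applies, after checking via~\eqref{cmarg} that $\dcut(A_n,\kae)\to0$), and then zeroing out any hyperentry that \emph{projects} onto a bad pair in $A_n$ — this gives exactly the marginal bound that the rest of the argument needs, and also means you never have to prove a genuinely higher‑dimensional version of Lemma~\ref{l_on}, whose matching argument does not adapt easily.

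Two smaller points. The diagonal entries of $H_{n,r}$ play no role whatsoever in the definition of $\HH(H_n)$ or $G(H_n)$ (see the paper's Remark~\ref{Rdiag}), so zeroing them does not ``change the expected number of hyperedges'' — it is a free normalization, not an approximation. And your remark about why $\cnone{\cdot}$ is the right norm for the Lipschitz estimate has the two definitions swapped: in \eqref{cnr1}–\eqref{cnr2} it is $\cntwo{\cdot}$ that tests against products $f_1(x_1)\cdots f_r(x_r)$ of general bounded functions, which is exactly the form the rooted‑subtree factors take, and indeed the paper uses $\cntwo{\cdot}$ in Subsection~\ref{hti}; the ``tiny advantage'' of $\cnone{\cdot}$ is only relevant when summing over unboundedly many $r$ in the definition of $\dcut$ for general hyperkernels, which does not arise in the $R$-bounded Lemma~\ref{lh1}.
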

The proof of this lemma will take up the next several subsections.
The deduction of \refT{th1h} will then be relatively easy.

\subsection{Eliminating large edge probabilities}

Given a hypermatrix $H_n$, for $r\ge 2$
let $A_{n,r}$ be the matrix with entries
\begin{equation}\label{Anrdef}
 a^{(r)}_{ij} =  n^{-(r-2)}\sum_{i_3}\sum_{i_4}\cdots\sum_{i_r} h_{iji_3i_4\ldots i_r},
\end{equation}
and let
\begin{equation}\label{Andef}
 A_n=\sum_{r\ge 2} r(r-1) A_{n,r}
\end{equation}
be the {\em marginal matrix} corresponding to $H_n$, with entries $a_{ij}$.
Note that the kernel $\ka_{A_n}$ defined from $A_n$ is simply
the edge kernel $\kae$ corresponding to $\kf(H_n)$.
Also, in the Poisson multi-graph form of our model,
if all diagonal entries are zero, then
the expected number of $ij$ edges in $G(H_n)$ is exactly $a_{ij}/n$.
(See \refR{Rdiag}.)

Given $W_r\in L^1(\sss^r)$, let $\hW_r$ be its marginal with respect to the first
two coordinates, defined by
\[
 \hW_r(x,y) = \int_{\sss^{r-2}} W_r(x,y,x_3,\ldots,x_r)\dd\mu(x_3)\cdots\dd\mu(x_r).
\]
Note that
\begin{equation}\label{cmarg}
 \cn{\hW_r} \le \cn{W_r}.
\end{equation}
Indeed, to see this simply take $S_3,\ldots,S_r=\sss$ in \eqref{cnr1}, or $f_3,\ldots,f_r=1$ in \eqref{cnr2}.

An immediate consequence is the following lemma.

\begin{lemma}\label{edgek}
Let $R\ge 2$ be fixed, and suppose that $(H_n)$ is a sequence of $R$-bounded
hypermatrices and $\kf$ an $R$-bounded hyperkernel with $\dcut(H_n,\kf)\to 0$.
Then $\dcut(A_n,\kae)\to 0$, where $A_n$ is the marginal matrix of $H_n$,
and $\kae$ is the edge kernel of $\kf$.
\end{lemma}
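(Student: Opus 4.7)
The plan is to reduce the statement to the marginal contraction \eqref{cmarg}. The key observation is that for each $r\le R$, the kernel $\ka_{A_{n,r}}$ associated to the matrix $A_{n,r}$ of \eqref{Anrdef} coincides pointwise with the two-dimensional marginal $\widehat{\ka_{H_{n,r}}}$ of the piecewise constant $r$-kernel $\ka_{H_{n,r}}$ on $[0,1]^r$. Indeed, for $x$ in the interval $I_i=((i-1)/n,i/n]$ and $y\in I_j$, the integral defining $\widehat{\ka_{H_{n,r}}}(x,y)$ splits as a sum over $n^{r-2}$ cubes of volume $n^{-(r-2)}$, on each of which $\ka_{H_{n,r}}$ takes the value $h_{iji_3\ldots i_r}$, and the result is precisely the entry $a^{(r)}_{ij}$ from \eqref{Anrdef}. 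Combined with \eqref{Andef} and \eqref{ce}, this yields the pointwise identities
\[
 \ka_{A_n}=\sum_{r=2}^{R}r(r-1)\,\widehat{\ka_{H_{n,r}}}\quad\text{and}\quad
 \kae=\sum_{r=2}^{R}r(r-1)\,\widehat{\ka_r},
\]
so the two-dimensional side of the desired statement is literally the two-dimensional marginal of the hyperkernel side.

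With this identification the rest is short. For each $n$ I would pick a single measure-preserving bijection $\tau_n$ of $\sss$, applied simultaneously to every component of $\kf$, such that $\cn{\kf(H_n)-\kf^{(\tau_n)}}\le \dcut(H_n,\kf)+1/n$; this is possible by the definition of $\dcut$ for hyperkernels together with \eqref{cnrs}. A change of variables on $\sss$ shows that marginalization commutes with such rearrangements, i.e., $(\widehat{\ka_r})^{(\tau_n)}=\widehat{\ka_r^{(\tau_n)}}$, so $\kae^{(\tau_n)}=\sum_{r=2}^{R}r(r-1)\widehat{\ka_r^{(\tau_n)}}$ is a rearrangement of $\kae$. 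Linearity of the marginal, the triangle inequality, \eqref{cmarg}, and the trivial bound $r(r-1)\le R\cdot r$ for $r\le R$ then give
\[
 \dcut(A_n,\kae)\le\cn{\ka_{A_n}-\kae^{(\tau_n)}}
 \le\sum_{r=2}^{R}r(r-1)\cn{\ka_{H_{n,r}}-\ka_r^{(\tau_n)}}
 \le R\,\cn{\kf(H_n)-\kf^{(\tau_n)}},
\]
which tends to zero by the choice of $\tau_n$.

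There is no real obstacle: the whole content is that the marginal matrix construction is literally a two-dimensional marginalization of the hyperkernel, plus the trivial fact \eqref{cmarg} that marginalization is a contraction in the cut norm. The $R$-boundedness hypothesis enters only in the last step, where it converts the weighting $r(r-1)$ used in \eqref{ce} and \eqref{Andef} into the weighting $r$ appearing in the hyperkernel cut norm \eqref{cnrs}, at the cost of a constant factor $R$.
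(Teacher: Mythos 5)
Your proposal is correct and follows essentially the same route as the paper: identify $\ka_{A_{n,r}}$ with the two-dimensional marginal of the $r$-kernel attached to $H_{n,r}$, choose rearrangements $\tau_n$ nearly realizing $\dcut(H_n,\kf)$, use that marginalization commutes with rearrangement and is a contraction for the cut norm \eqref{cmarg}, and absorb the discrepancy between the weight $r(r-1)$ in \eqref{ce} and the weight $r$ in \eqref{cnrs} using $R$-boundedness. The paper phrases the last step as ``changing the factor $r$ to $r(r-1)$ does not affect convergence to zero''; your explicit bound $r(r-1)\le Rr$ is a clean way to say the same thing.
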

\begin{proof}
By definition of $\dcut$, there are measure-preserving bijections $\rn:\sss\to\sss$
such that $\cn{\kf(H_n)-\kf^{(\rn)}}\to 0$. With $\kf=(\ka_r)_{r=2}^R$,
writing $\ka_r'$ for the $r$-kernel
corresponding to $H_{n,r}$, this says exactly that $\sum_{r=2}^Rr \cn{\ka_r'-\ka_r^{(\rn)}}\to 0$.
Using \eqref{cmarg}, and noting that taking marginals commutes
with rearrangement, it follows that
$\sum_{r=2}^R r\cn{\ka_{A_{n,r}}-\hka_r^{(\rn)}}\to 0$.
Since $\cn{\cdot}$ is a norm on $L^1(\sss^2)$, we have
\[
 \cn{\ka_{A_n}-\kae^{(\rn)}}\le \sum_{r=2}^R r(r-1)\cn{\ka_{A_{n,r}}-\hka_r^{(\rn)}}\to 0,
\]
since changing the factor $r$ to $r(r-1)$ does not affect convergence to zero.
Hence $\dcut(A_n,\kae)\to 0$.
\end{proof}

\begin{remark}
To obtain a result analogous to \eqref{edgek} without the $R$-boundedness assumption,
we would have to redefine $\dcut$ for hyperkernels, replacing the
factor $r$ in \eqref{cnrs} by a factor $r(r-1)$, and only considering
`edge-integrable' limits $\kf$, i.e., hyperkernels with $\sum_r r(r-1)\int \ka_r$ finite.
\end{remark}

Let us call a sequence $(H_n)$ of hypermatrices {\em well behaved}
if two conditions hold: every diagonal entry is zero,
and $\max A_n/n\to 0$ as $n\to\infty$, where $\max A_n$ is the largest
entry of the $n$-by-$n$ marginal matrix $A_n$ corresponding to $H_n$.
Note that if $(H_n)$ is well behaved, then the probability that some
particular edge $ij$ is present in $G(H_n)$ is $o(1)$ as $n\to\infty$,
where the bound is uniform over edges.

\begin{lemma}\label{wlogh}
Let $R\ge 2$ be fixed, and suppose that $(H_n)$ is a sequence of $R$-bounded hypermatrices
and $\kf$ is an $R$-bounded hyperkernel
with $\dcut(H_n,\kf)\to 0$.
Then there is a sequence of well-behaved $R$-bounded hypermatrices $(H_n')$
such that $\on{\kf(H_n)-\kf(H_n')}\to 0$ and $\dcut(H_n',\kf)\to 0$.
\end{lemma}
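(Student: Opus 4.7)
The plan is to truncate the hypermatrix by zeroing out any entry whose index tuple is either diagonal or contains an index pair $(i,j)$ with large marginal value $a_{ij}$. First, \refL{edgek} gives $\dcut(A_n,\kae)\to 0$, where $A_n$ is the marginal matrix of $H_n$, so applying \refL{l_on} to $(A_n)$ yields a threshold $M(n)=o(n)$ such that the set $B_n=\{(i,j):a_{ij}>M(n)\}$ satisfies $\sum_{(i,j)\in B_n}a_{ij}=o(n^2)$.

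Define $H_n'=(H_{n,r}')_{r=2}^R$ by $h'_{i_1\ldots i_r}=0$ whenever the tuple is diagonal or some ordered pair $(i_k,i_\ell)$ with $k\ne\ell$ lies in $B_n$, and $h'_{i_1\ldots i_r}=h_{i_1\ldots i_r}$ otherwise. Then $H_n'$ has no diagonal entries by construction, and every hyperedge surviving the truncation has all its pairs outside $B_n$, so the marginal matrix $A_n'$ satisfies $a'_{ij}=0$ when $(i,j)\in B_n$ and $a'_{ij}\le a_{ij}\le M(n)$ otherwise; hence $\max A_n'\le M(n)=o(n)$ and $(H_n')$ is well-behaved.

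For the $L^1$ convergence, I would bound $\on{\ka_r-\ka_r'}$ by $n^{-r}$ times the sum of $h_{i_1\ldots i_r}$ over zeroed tuples, split into a diagonal part and a bad-pair part. Using \eqref{Anrdef} together with the symmetry of $H_{n,r}$, a union bound over the $O(r^2)$ choices of offending index pair $(k,\ell)$, and the inequality $a^{(r)}_{ij}\le a_{ij}/(r(r-1))$ coming from \eqref{Andef}, the bad-pair contribution is at most $n^{-2}\sum_{(i,j)\in B_n}a_{ij}=o(1)$ and the diagonal contribution is at most $\tfrac{1}{2}n^{-2}\sum_i a_{ii}$. The trace satisfies $\sum_i a_{ii}=o(n^2)$: indices with $(i,i)\in B_n$ contribute at most $\sum_{B_n}a_{ij}=o(n^2)$, while those with $(i,i)\notin B_n$ each contribute at most $M(n)=o(n)$, totalling $nM(n)=o(n^2)$. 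Since $R$ is fixed, summing $r\on{\ka_r-\ka_r'}$ over $r\le R$ yields $\on{\kf(H_n)-\kf(H_n')}\to 0$; the triangle inequality, together with $\dcut\le \on{\cdot}$ on the same probability space, then gives $\dcut(H_n',\kf)\le \on{\kf(H_n)-\kf(H_n')}+\dcut(H_n,\kf)\to 0$.

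There is no substantial obstacle; the only delicate point is the bookkeeping that relates individual entries $h_{i_1\ldots i_r}$ to the two-dimensional marginals $a^{(r)}_{ij}$ through the symmetry of $H_{n,r}$, together with the small observation that $\sum_i a_{ii}=o(n^2)$, which requires combining the $B_n$-mass bound with the pointwise bound $a_{ii}\le M(n)$ off $B_n$.
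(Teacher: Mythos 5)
Your proof is correct and follows essentially the same route as the paper: apply Lemma~\ref{edgek} and Lemma~\ref{l_on} to the marginal matrix, zero out hypermatrix entries whose index tuple contains a diagonal or large-marginal pair, use the symmetry of $H_{n,r}$ together with $a^{(r)}_{ij}\le a_{ij}/(r(r-1))$ to control the $L^1$ loss, and then pass from $\on{\cdot}$ to $\dcut$. The only cosmetic difference is that the paper merges ``diagonal'' and ``large'' into a single notion of a \emph{bad} entry $a_{ij}$ (with bad-sum $S=o(n^2)$, obtained by noting the sum of any $n$ entries of $A_n$ is $o(n^2)$), whereas you handle the trace $\sum_i a_{ii}$ separately; both give the same bound $O(S/n^2)=o(1)$.
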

\begin{proof}
Let $A_n$ be the marginal matrix corresponding to $H_n$ and let $\kae$
the edge kernel corresponding to $\kf$.
Then by \refL{edgek} we have $\dcut(A_n,\kae)\to 0$.
By \refL{l_on} there is a function $M(n)$ with $M(n)=o(n)$
such that only $o(n)$ entries of $A_n$ exceed $M(n)$, and the sum of these
entries is $o(n^2)$.
This immediately implies that the sum of any $n$ entries of $A_n$ is
$o(n^2)$.

Call an entry $a_{ij}$ of $A_n$ {\em bad} if either $a_{ij}>M(n)$ or $i=j$.
Let $S$ be the sum of the bad entries, so $S=o(n^2)$.
To define $H_n'$, simply modify $H_n$ by setting to $0$
any entry $\hir$ of $H_{n,r}$ such that $a_{i_ki_\ell}$ is bad for
some pair $i_k$, $i_\ell$,
$k< \ell$.
(In other words, we replace all entries contributing
to bad entries $a_{ij}$ in the marginal by zero.)
Then $H_n'$ is a hypermatrix, and
its marginal $A_n'=(a_{ij}')$ satisfies $a_{ij}'\le a_{ij}$
with $a_{ij}'=0$ whenever $a_{ij}$ is bad. Thus $(H_n')$ is well behaved.

Finally, for each $r$, we may think of modifying $H_{n,r}$ to obtain
$H_{n,r}'$ in $\binom{r}{2}$ stages, in each one fixing $k$ and $\ell$
and setting to zero entries $\hir$ for which $a_{i_ki_\ell}$ is bad.
The sum of the entries set to zero at each stage
is at most $n^{r-2}S$.
It follows easily that
\[
 \on{\kf(H_n)-\kf(H_n')}\le \sum_{r=2}^R \binom{r}{2}Sn^{-2} = O(S/n^2) = o(1).
\]
The final statement follows immediately, 
since
\[
 \dcut(H_n,H_n') = \dcut(\kf(H_n),\kf(H_n')) \le \cn{\kf(H_n)-\kf(H_n')}
  \le \on{\kf(H_n)-\kf(H_n')}.
\]
\end{proof}

An immediate consequence of \refL{wlogh} is the following rather informally
worded corollary.
\begin{corollary}\label{cwb}
In proving \refL{lh1}, we may assume that $(H_n)$ is well behaved.
\end{corollary}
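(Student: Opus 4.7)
The strategy is a direct reduction using \refL{wlogh}. First I would apply that lemma to produce a sequence of well-behaved $R$-bounded hypermatrices $(H_n')$ with $\on{\kf(H_n)-\kf(H_n')}\to 0$ and $\dcut(H_n',\kf)\to 0$, observing that by the construction in the proof of \refL{wlogh} the entries of $H_n'$ are pointwise at most those of $H_n$. Granting \refL{lh1} in the well-behaved case, we then have $N_k(G(H_n'))/n\pto \rho_k(\kf)$, so it suffices to show $|N_k(G(H_n))-N_k(G(H_n'))|=\op(n)$.

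For this I would couple the two random graphs, most conveniently through the Poisson multi-hypergraph variants (which have the same component structure as the simple ones), using standard thinning to arrange that $G(H_n')\subseteq G(H_n)$. A direct computation bounds the expected number of extra size-$r$ hyperedges by
\[
\sum_{\{i_1,\ldots,i_r\}\,\mathrm{distinct}}\frac{r!\,(h_{i_1\ldots i_r}-h'_{i_1\ldots i_r})}{n^{r-1}}\le n\,\on{\ka_r^{(H_n)}-\ka_r^{(H_n')}},
\]
where $\ka_r^{(H_n)}$ is the $r$-kernel associated with $H_{n,r}$. Summing over $2\le r\le R$ and recalling that $\on{\kf(H_n)-\kf(H_n')}=\sum_{r\ge 2}r\on{\ka_r^{(H_n)}-\ka_r^{(H_n')}}\to 0$ dominates $\sum_r\on{\ka_r^{(H_n)}-\ka_r^{(H_n')}}$, the expected total number of extra hyperedges is $o(n)$. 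Since each size-$r$ hyperedge with $r\le R$ contributes at most $\binom{R}{2}$ graph edges, the expected size of $E(G(H_n))\triangle E(G(H_n'))$ is $o(n)$ as well.

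Finally, adding or removing a single graph edge alters $N_k$ by at most $3k$ (a single edge change only merges or splits one pair of components, and only components of total size $\le k$ affect the count of size-exactly-$k$ components), so $\E|N_k(G(H_n))-N_k(G(H_n'))|=O(k)\cdot o(n)=o(n)$. Markov's inequality then gives $|N_k(G(H_n))-N_k(G(H_n'))|=\op(n)$, and combining with the well-behaved case yields $N_k(G(H_n))/n\pto\rho_k(\kf)$.

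The argument is essentially routine; the only part requiring any care is the bookkeeping that translates $L^1$-closeness of hyperkernels into an expected edge-count bound between the coupled graphs. $R$-boundedness is what makes this clean, by keeping the combinatorial factors $r!$ and $\binom{r}{2}$ uniformly bounded throughout.
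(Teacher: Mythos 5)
Your argument matches the paper's proof essentially step for step: apply \refL{wlogh} to get a well-behaved $(H_n')$, couple the Poisson multi-hypergraphs so the expected number of hyperedges in the symmetric difference is $o(n)$ (you additionally note the coupling can be taken monotone since entries of $H_n'$ are pointwise $\le$ those of $H_n$, which the paper leaves implicit), translate to $o(n)$ expected graph-edge changes using $R$-boundedness, and finish with the edge-Lipschitz property of $N_k$. The constant $3k$ in your Lipschitz bound is a harmless overestimate of the paper's $2k$; otherwise the proposals coincide.
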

\begin{proof}
Let $(H_n)$ and $\kf$ satisfy the assumption of \refL{lh1}, and
define $(H_n')$ as in \refL{wlogh}. Let $G_n'=G(H_n')$ and $G_n=G(H_n)$.
There is a natural coupling of $\HH(H_n')$ and $\HH(H_n)$ in which the expected
number of $r$-vertex 
hyperedges in the symmetric difference is at most $n\on{\ka_{H_{n,r}'}-\ka_{H_{n,r}}}$
(with equality if all diagonal entries are zero, at least in the
Poisson multi-hypergraph version);
by \refL{wlogh} this number is $o(n)$.
Since each hyperedge has at most $R$ vertices, and so
contributes at most $\binom{R}{2}=O(1)$ edges, summing over $2\le r\le R$ we have
$\E |E(G_n')\setdiff E(G_n)| =o(n)$.

Now $\dcut(H_n',\kf)\to 0$, so if \refL{lh1} holds in the well-behaved case, then
$N_k(G_n')/n\pto \rho_k(\kf)$.
Since adding or deleting an edge to a graph $G$ changes the number of vertices
in components of order $k$ by at most $2k$, we have
$\E |N_k(G_n)-N_k(G_n')| =o(n)$, so $N_k(G_n)/n\pto \rho_k(\kf)$ follows.
\end{proof}

\subsection{Hypertree integrals}\label{hti}

Throughout this subsection, we fix an integer $R\ge 2$. All hyperkernels
will be $R$-bounded, and all edges of all hypergraphs will have
size at most $R$.

A {\em hypertree} is simply a connected
hypergraph containing no cycles, or,
equivalently, a connected hypergraph $\HH$ in which
$|\HH|=1+ \sum(|E_i|-1)$, where the sum runs over all edges $E_i$ of $\HH$.

Given a hyperkernel $\kf=(\ka_r)_{r\ge 2}$ and a hypertree $\HH$,
we shall define $\tx(\HH,\kf)$ in analogy with \eqref{tx}.
Unfortunately, there is a difference in the normalization,
and the marginals need some further explanation.
For the latter, given $W_r\in L^1(\sss^r)$, let
\[
 \la_{W_r}(x) = \la_{W_r}^{(1)}(x) = \int_{\sss^{r-1}}W_r(x,x_2,\ldots,x_r) \dd\mu(x_2)\cdots\dd\mu(x_r).
\]
The marginal $\la_{W_r}^{(i)}$
of $W_r$ with respect to the $i$th coordinate is defined similarly.

Given $\kf=(\ka_r)_{r=2}^R$, let
\begin{equation}\label{hmdef}
 \la(x)=\la_{\kf}(x) = \sum_r r \la_{\ka_r}(x).
\end{equation}
The reason for the extra factor $r$ is that,
as noted earlier,
we essentially add a hyperedge on each ordered $r$-tuple
$v_1,\ldots,v_r$ with a probability $\ka_r/n^{r-1}$, and because
a particular vertex could appear in $r$ places in the ordered $r$-tuple,
it is then $\la(x)$ that gives the expected number of hyperedges
containing a given vertex.

We now define $\tx(\HH,\kf)$ as an integral over $\sss^{|\HH|}$ with one variable
$x_i$ for each vertex $i$ of $\HH$. The integrand has a factor
$r!\ka_r(x_{i_1},\ldots,x_{i_r})$ for
each $r$-element hyperedge $E=i_1i_2\ldots i_r$ of $\HH$,
and a factor $e^{-\la_{\kf}(x_i)}$  for each $i$

With this definition, \refT{th_FW} extends to the hyperkernel context.
\begin{theorem}\label{th_FWh}
Let $R\ge 2$ be fixed, and let $\HH$ be a hypertree in which
each hyperedge has at most $R$ elements.
Then $\kf\mapsto \tx(\HH,\kf)$ is a  bounded map on the space $\cwsR$ of $R$-bounded
hyperkernels and is
Lipschitz continuous in the cut norm. In other words, there exists a
constant $C$ (depending on $R$ and $\HH$ only) such that $\tx(\HH,\kf)\le C$ for all
$\kf\in\cwsR$, and $|\tx(\HH,\kf)-\tx(\HH,\kf')|\le C \cn{\kf-\kf'}$ for all
$\kf,\kf'\in\cwsR$.
\end{theorem}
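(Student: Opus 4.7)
The plan is to mimic the three-step strategy used to prove \refT{th_FW}. Introduce the auxiliary hypertree integral
\[
 \ty\bigpar{\HH,(W_E)_{E\in E(\HH)}}\=\int_{\sss^s}\prod_{E=(i_1,\ldots,i_r)\in E(\HH)}W_E(x_{i_1},\ldots,x_{i_r})\,\dd\mu(x_1)\cdots\dd\mu(x_s),
\]
where $s=|V(\HH)|$ and each $W_E$ is an $|E|$-kernel. For each hyperedge $E=(i_1,\ldots,i_r)\in E(\HH)$, define
\[
 W_E(\kf)(x_1,\ldots,x_r)\=r!\,\ka_r(x_1,\ldots,x_r)\prod_{k=1}^r e^{-\la_\kf(x_k)/d_{i_k}},
\]
where $d_i$ denotes the $\HH$-degree of vertex $i$. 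Splitting the factor $e^{-\la_\kf(x_i)}$ in the integrand of $\tx(\HH,\kf)$ equally among the $d_i$ incident hyperedges gives $\tx(\HH,\kf)=\ty(\HH,(W_E(\kf))_E)$.

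The analogs of \refL{L2root} and \refL{L2} go through routinely. By induction on $|E(\HH)|$, peeling off a leaf hyperedge and integrating out its unique pendant vertex, one gets $\ty(\HH,(W_E)_E)\le\BB^{|E(\HH)|}$ whenever every one-coordinate marginal $\la_{W_E}^{(k)}$ is bounded by $\BB$. For the Lipschitz estimate, fix a single edge $E$ to change; integrating out all vertices outside $E$ turns $\ty$ into $\int_{\sss^{|E|}}\prod_k g_k(x_{i_k})\,W_E\,\dd\mu^{|E|}$, where each $g_k$ is a rooted $\tz$-integral over the component of $\HH\setminus\{E\}$ attached at $i_k$, bounded in sup norm by $\BB^{|E(C_k)|}$; then \eqref{cnr2} yields
\[
 \bigabs{\ty(\HH,(W_E)_E)-\ty(\HH,(W_E')_E)}\le\BB^{|E(\HH)|-1}\sum_{E\in E(\HH)}\cn{W_E-W_E'}.
\]
For $W_E=W_E(\kf)$, the pointwise inequalities $\la_{\ka_r}\le\la_\kf/r$ and $te^{-t/d}\le d/e$ give $\la_{W_E(\kf)}^{(k)}\le(r-1)!\,d_{i_k}/e\le(R-1)!\,R/e=:\BB$, which furnishes the required marginal bound and hence the boundedness half of the theorem.

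The hard part is the hyperkernel analog of \refL{L1}: the map $\kf\mapsto(W_E(\kf))_E$ must be Lipschitz in the cut norm. Writing
\[
 W_E(\kf_1)-W_E(\kf_2)=r!(\ka_{1,r}-\ka_{2,r})\prod_k e^{-\la_{\kf_1}(x_k)/d_{i_k}}+r!\,\ka_{2,r}\Bigsqpar{\textstyle\prod_k e^{-\la_{\kf_1}(x_k)/d_{i_k}}-\prod_k e^{-\la_{\kf_2}(x_k)/d_{i_k}}},
\]
the first term has cut norm at most $r!\cn{\ka_{1,r}-\ka_{2,r}}$ by the hyperkernel analog of \eqref{cutnorm2}. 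For the second term, split $\sss^r$ into $2^r$ regions according to the sign of $\la_{\kf_1}(x_k)-\la_{\kf_2}(x_k)$ for each $k$, and on each region expand the difference of products telescopically, choosing the decomposition so that each factor $e^{-a_k\la_{\kf_1}}-e^{-a_k\la_{\kf_2}}$ appears alongside $\ka_{j,r}$ with $j\in\{1,2\}$ matching the side of the partition at coordinate $k$ --- precisely as in the four-case analysis underlying \refL{L1}. Applying $|e^{-a\la_1}-e^{-a\la_2}|\le a|\la_1-\la_2|e^{-a\min(\la_1,\la_2)}$ followed by $\la_{\ka_{j,r}}\le\la_{\kf_j}/r$ and $te^{-t/d}\le d/e$ bounds each resulting $L^1$ contribution by a constant multiple of $\on{\la_{\kf_1}-\la_{\kf_2}}$, which by the hyperkernel analog of \refL{L0} (apply \refL{L0} coordinate-wise and sum with the weights in \eqref{cnrs}) is at most $\cn{\kf_1-\kf_2}$. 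Summing over $E\in E(\HH)$ and combining with the Lipschitz estimate for $\ty$ completes the proof of \refT{th_FWh}.
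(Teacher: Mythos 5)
Your proof follows the same three-step skeleton as the paper's: rewrite $\tx$ as $\ty$ via a per-edge factorization of the exponentials, prove a Lipschitz estimate for $\kf\mapsto W_E(\kf)$ in the cut norm, and prove L2-style bounded/Lipschitz estimates for $\ty$. The real difference is in the middle step, and it is a genuine one. The paper (Lemma~\ref{L1ha}) factors $W_E=f_1\cdots f_r\tW_E$ with $\tW_E=\ka_r^{\ba}$ and $f_i(x)=\exp(-\sum_{s\ne r}s\la_s(x)/d_i)$, applies the verbatim extension of \refL{L1} (Lemma~\ref{L1h}) to $\tW_E$ (whose exponentials involve only $\la_{\ka_r}$), and handles the extra $f_i$ factors by $L^1$-continuity combined with the inequality \eqref{frw}. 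You instead run the region-split/telescope/mean-value argument directly on the full $W_E(\kf)$ with the full marginal $\la_\kf$ in the exponents, converting back with $\la_{\ka_r}\le\la_\kf/r$ at the end. Your route is more monolithic and avoids both the auxiliary factorization and the frw-style lemma, which is a mild simplification; the paper's route has the advantage of quoting \refL{L1} essentially unchanged.

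Three slips to repair. (i) The displayed decomposition writes the second term as $r!\,\ka_{2,r}\bigsqpar{\prod e^{-\la_{\kf_1}/d}-\prod e^{-\la_{\kf_2}/d}}$, so after the region-split every exponential difference is multiplied by $\ka_{2,r}$. But on a region where $\la_{\kf_1}(x_k)\le\la_{\kf_2}(x_k)$ the mean-value bound produces the factor $e^{-\la_{\kf_1}(x_k)/d_{i_k}}$, and $\la_{\ka_{2,r}}(x_k)\,e^{-\la_{\kf_1}(x_k)/d_{i_k}}$ is not uniformly bounded (it grows if $\la_{\kf_2}\gg\la_{\kf_1}$). Your prose already says the right thing --- choose the entire telescope (in particular, where the switch $\ka_{1,r}\to\ka_{2,r}$ occurs) per region so that the $\ka_{j,r}$ next to the $k$th exponential difference matches the sign of $\la_{\kf_1}(x_k)-\la_{\kf_2}(x_k)$, exactly as in \refL{L1} --- so the fixed two-term split up front should simply be deleted; as stated it contradicts the rest of the argument. (ii) You write $\la_{W_E(\kf)}^{(k)}\le(r-1)!\,d_{i_k}/e\le(R-1)!\,R/e$; but $d_{i_k}$ is a degree in $\HH$ and has nothing to do with $R$, so the last step is unjustified. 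The correct bound is $(R-1)!\,\Delta(\HH)/e$, which is still a constant depending only on $R$ and $\HH$ (the paper uses $\BB=\Delta(\HH)/e$), so nothing downstream breaks. (iii) A leaf hyperedge of $\HH$ can have several pendant vertices when $R\ge3$, not a unique one; the induction on $e(\HH)$ still works, but integrate out all of them.
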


Rather than give a formal proof, we shall briefly describe the modifications needed
to the arguments in \refSS{ss_ti}.
Note that we make take $\cn{\cdot}=\cnone{\cdot}$ or $\cn{\cdot}=\cntwo{\cdot}$ in \refT{th_FWh};
on $R$-bounded hyperkernels, these norms are equivalent.
As in \refSS{ss_ti}, in this subsection we use the norm $\cntwo{\cdot}$.

Firstly, note that \refL{L0} extends immediately: if $W_r,W_r'\in
L^1(\sss^r)$, then
\begin{equation}\label{hmc}
  \on{\la_{W_r}-\la_{W_r'}} \le \cn{W_r-W_r'}.
\end{equation}
(Perhaps the nicest way to see this is to note that, generalizing
\eqref{cmarg} in the natural way, the cut norm of any $d$-dimensional
marginal of some $W\in L^1(\sss_r)$ is at most $\cn{W}$, and that on
$L^1(\sss)$, the $L^1$ norm and cut norm coincide.)

Fix $\HH$. Extending \eqref{ty}, suppose that for each $r$-element hyperedge
$E$ of $\HH$ we have a $W_E\in\cwr$,
where $\cwr$ is the set of (not necessarily symmetric) non-negative functions $W_r\in L^1(\sss^r)$.
Then we may define $t_0(\HH,(W_E)_{E\in E(\HH)})$ in analogy with \eqref{ty},
again without the exponential factors in $\tx(\HH,\kf)$.
To reintroduce these, given any $W_r\in \cwr$ and $\ba=(a_1,\ldots,a_r)$ with
each $a_i\ge 0$, set
\[
 W_r^{\ba}(x_1,\ldots,x_r) = W_r(x_1,\ldots,x_r)\prod_{i=1}^r \exp\bigpar{-a_i \la_{W_r}^{(i)}(x_i) },
\]
in analogy with \eqref{Wabdef}.

The proof of \refL{L1} extends {\em mutatis mutandis} to give the following result.
\begin{lemma}\label{L1h}
For every fixed $\ba\ge0$, the map $W\mapsto W^{\ba}$ is Lipschitz continuous
on $\cwr$ in the cut norm; more precisely,
\begin{equation*}
\cn{W_1^{\ba}-W_2^{\ba}}\le (2^r+r2^r/e)
\cn{W_1-W_2}  
\end{equation*}
for all $W_1,W_2\in\cwr$. Also, for every
$W\in\cwr$, the $i$th marginal of $W^{\ba}$ is bounded by $e^{-1}/a_i$. \noproof
\end{lemma}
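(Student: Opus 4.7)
My plan is to generalize the proof of \refL{L1} from two to $r$ dimensions. For the marginal bound, integrating out the coordinates other than $x_i$ in $W_r^{\ba}$ yields
\[
 \int_{\sss^{r-1}} W_r^{\ba}(x_1,\ldots,x_r) \prod_{j\ne i}\dd\mu(x_j) \le e^{-a_i\la_{W_r}^{(i)}(x_i)}\la_{W_r}^{(i)}(x_i) \le e^{-1}/a_i,
\]
since the other exponential factors are bounded by $1$ and the final step uses $te^{-t}\le e^{-1}$.

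For the Lipschitz estimate, I would partition $\sss^r$ into $2^r$ regions indexed by tuples $(s_1,\ldots,s_r)\in\{1,2\}^r$, where on the region labelled $(s_1,\ldots,s_r)$ we have $\la_{W_{s_i}}^{(i)}(x_i)\le\la_{W_{3-s_i}}^{(i)}(x_i)$ for each $i$, enforced by a product of indicator functions analogous to those in the proof of \refL{L1}. On each region I decompose $W_1^{\ba}-W_2^{\ba}$ as a telescoping sum of $r+1$ terms, performed in the following order: first, for each $i$ with $s_i=1$, change the factor $e^{-a_i\la_{W_1}^{(i)}}$ to $e^{-a_i\la_{W_2}^{(i)}}$ with $W_1$ still attached; next, change $W_1$ to $W_2$; finally, for each $i$ with $s_i=2$, change $e^{-a_i\la_{W_1}^{(i)}}$ to $e^{-a_i\la_{W_2}^{(i)}}$ with $W_2$ now attached. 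This ordering (which recovers the four ad hoc orderings in \refL{L1} when $r=2$) guarantees that at each exponential-change step the accompanying $W$-factor is $W_{s_i}$, matching the smaller exponential $e^{-a_i\la_{W_{s_i}}^{(i)}}$ produced by the mean value theorem.

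With this setup, the $W$-difference term is controlled by the $r$-fold Banach module inequality
\[
 \cn{h_1(x_1)\cdots h_r(x_r) W(x_1,\ldots,x_r)}\le \prod_{i=1}^r \sn{h_i}\,\cn{W},
\]
a direct consequence of \eqref{cnr2}, since every accompanying indicator and exponential factor is bounded by $1$. Each of the $r$ exponential-difference terms is bounded in $L^1$, after dropping the (bounded by $1$) indicator and remaining exponential factors and then integrating out the $r-1$ coordinates $x_j$, $j\ne i$, by
\[
 a_i\int_\sss |\la_{W_1}^{(i)}-\la_{W_2}^{(i)}|\, e^{-a_i\la_{W_{s_i}}^{(i)}}\, \la_{W_{s_i}}^{(i)}\dd\mu(x_i) \le e^{-1}\on{\la_{W_1}^{(i)}-\la_{W_2}^{(i)}} \le e^{-1}\cn{W_1-W_2},
\]
combining the mean value theorem, the inequality $t e^{-t}\le e^{-1}$ applied to $t=a_i\la_{W_{s_i}}^{(i)}$, and the hyperkernel analogue \eqref{hmc} of \refL{L0}. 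Summing the $r+1$ bounds on a single region gives $(1+r/e)\cn{W_1-W_2}$; summing over the $2^r$ regions yields the claimed constant $2^r+r2^r/e$.

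The main obstacle is purely bookkeeping. \refL{L1} already required separate treatment of $2^2=4$ cases, each with its own telescope; here there are $2^r$ cases. I expect no conceptual difficulty beyond \refL{L1} itself, and would execute the argument by writing the generic $(s_1,\ldots,s_r)$-case abstractly, with the single prescription ``change exps with $s_i=1$, then change $W$, then change exps with $s_i=2$'' replacing the four ad hoc orderings in \refL{L1}. The only point that deserves care is verifying that this prescription always places $W_{s_i}$ next to the exponential changed at step $i$, which is immediate from the fact that for $s_i=1$ the exp-change precedes the $W$-change, while for $s_i=2$ it follows it.
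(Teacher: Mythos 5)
Your proof is correct and is essentially the detailed execution of what the paper asserts with the phrase ``mutatis mutandis'' from \refL{L1}: the same partition by which of $\la_{W_1}^{(i)}$, $\la_{W_2}^{(i)}$ is smaller, a telescope on each region, the mean-value bound, $te^{-t}\le e^{-1}$, the Banach-module property, and the marginal inequality \eqref{hmc}. Your single ordering prescription (change the exponentials with $s_i=1$, then the $W$, then the exponentials with $s_i=2$) is a clean way to organize the $2^r$ telescopes and correctly recovers the four ad hoc decompositions in \refL{L1} when $r=2$.
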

As before, the first $2^r$ can be replaced by $1$, but we do not care about the constant.

There is one minor additional complication not present in the graph case, which
we now describe.
Given a hyperkernel $\kf=(\ka_r)_{r=2}^R$, for each hyperedge $E$ of $\HH$ with $r$ vertices
define $W_E\in \cwr$ by
\begin{equation}\label{WEdef}
 W_E(x_1,\ldots,x_r) = \ka_r(x_1,\ldots,x_r) \prod_{i=1}^r \exp\bb{-\la_{\kf}(x) /d_i},
\end{equation}
where $d_i$ is the degree in $\HH$ of the $i$th vertex of $E$ (in some arbitrary ordering).
Then we have
\begin{equation}\label{txtyh}
 \tx(\HH,\kf) = \ty(\HH, (W_E)_{E\in E(\HH)}),
\end{equation}
corresponding to \eqref{txty}.
In the graph case we simply had $W_{ij}=\ka^{(1/d_i,1/d_j)}$, but this no longer holds,
since the marginals appearing in \eqref{WEdef} are those of $\kf$, not simply
those of the kernel $\ka_r$ appropriate for $r$-element hyperedges.
The extra complication is dealt with by \refL{L1ha} below.

Given $\BB>0$, let $\cwrA$ be the set of $W\in \cwr$ with all marginals bounded by $\BB$.
If $f\in L^1(\sss)$ and $W\in \cwr$, define $fW$ by
\[
 (fW)(x_1,\ldots,x_r) = f(x_1)W(x_1,\ldots,x_r).
\]
Suppose that $W\in \cwrA$ and $f_1,f_2\in L^1(\sss)$.
Then 
\begin{equation}\label{fw}
 \cn{(f_1-f_2)W} \le
 \on{(f_1-f_2)W} = \on{(f_1-f_2)\la} \le \BB\on{(f_1-f_2)},
\end{equation}
where $\la$ is the first marginal of $W$.
Now suppose that $f_1,\ldots,f_r, f_1',\ldots,f_r'\in L^1(\sss)$ with $\sn{f_i},\sn{f_i'}\le 1$
for each $i$, and that $W$, $W'\in \cwrA$.
Defining $f_1\cdots f_rW$ and $f_1'\cdots f_r'W'$ in the obvious way, we 
have
\begin{equation}\label{frw}
 \cn{(f_1\cdots f_rW)  - (f_1'\cdots f_r'W')} \le \cn{W-W'}+ \BB\sum_{i=1}^r \on{f_i-f_i'}.
\end{equation}
Indeed, we may write the difference as $(f_1\cdots f_r)(W-W')$ plus $r$ terms whose cut norms
may be bounded by \eqref{fw}; the cut norm of the first term is at most $\cn{W-W'}$
by the analogue of \eqref{cutnorm2}.

With $\HH$ fixed, let $\BB=\Delta(\HH)/e$.

\begin{lemma}\label{L1ha}
For each hyperedge $E$ of $\HH$, the map $\kf\mapsto W_E$ is Lipschitz continuous
with respect to the cut norm, and $W_E$ belongs to $\cwrA$.
\end{lemma}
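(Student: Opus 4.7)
The plan is to handle the two claims in sequence; both reduce to the pointwise inequality $\la_{\kf}(x) \ge r\la_{\ka_r}(x)$, which holds because $\la_{\kf} = \sum_{s} s\la_{\ka_s}$ has $r\la_{\ka_r}$ as one of its non-negative summands, combined with the elementary estimate $te^{-at}\le 1/(ae)$.

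For the marginal bound $W_E\in\cwrA$: writing $g_j(x) = \exp(-\la_{\kf}(x)/d_j)$, the $i$th marginal of $W_E(\kf)$ equals
\begin{equation*}
g_i(x_i)\int_{\sss^{r-1}}\ka_r(x_1,\ldots,x_r)\prod_{j\ne i}g_j(x_j)\dd\mu(x_{\ne i}) \;\le\; g_i(x_i)\la_{\ka_r}(x_i),
\end{equation*}
after bounding $g_j \le 1$ for $j\ne i$. Using $g_i(x_i)\le\exp(-r\la_{\ka_r}(x_i)/d_i)$ and $te^{-rt/d_i}\le d_i/(re)$ gives a bound of $d_i/(re)\le\Delta(\HH)/(2e)\le\BB$.

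For the Lipschitz estimate, rather than repeat the four-case telescoping of \refL{L1}, I would use a smooth interpolation. Set $\kf_t = t\kf + (1-t)\kf'$ for $t\in[0,1]$, let $\tilde\ka_t = t\ka_r + (1-t)\ka_r'$ and $\tilde g_j^{(t)}(x) = \exp(-\la_{\kf_t}(x)/d_j)$, and define $\phi(t) = W_E(\kf_t)$. Direct differentiation gives
\begin{equation*}
\phi'(t) = (\ka_r-\ka_r')\prod_i\tilde g_i^{(t)} \;-\; \tilde\ka_t\sum_i\frac{(\la_{\kf}-\la_{\kf'})(x_i)}{d_i}\prod_j\tilde g_j^{(t)},
\end{equation*}
and $W_E(\kf) - W_E(\kf') = \int_0^1 \phi'(t)\dt$. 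Subadditivity of $\cn{\cdot}$ lets me pull the norm inside the integral and estimate each term separately. The first term of $\phi'$ contributes at most $\cn{\ka_r-\ka_r'}$ by the $r$-fold analogue of \eqref{cutnorm2}. For each of the $r$ terms in the sum I would bound $\cn{\cdot}\le\on{\cdot}$, drop all $\tilde g_j^{(t)}$ factors with $j\ne i$, integrate $\tilde\ka_t$ over the remaining coordinates to obtain $\la_{\tilde\ka_t}(x_i)$, and apply the observation above at the interpolated point: by linearity $\la_{\kf_t}=t\la_{\kf}+(1-t)\la_{\kf'}\ge r\la_{\tilde\ka_t}$, so $\la_{\tilde\ka_t}(x_i)\tilde g_i^{(t)}(x_i)\le d_i/(re)$, giving a per-term bound of $(re)^{-1}\on{\la_{\kf}-\la_{\kf'}}$. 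Summing over $i$ and $t\in[0,1]$, and then using the hyperkernel version of \refL{L0}, namely $\on{\la_{\kf}-\la_{\kf'}}\le\sum_s s\cn{\ka_s-\ka_s'}=\cn{\kf-\kf'}$ (which follows from \eqref{hmc}), completes the Lipschitz estimate.

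The main technical point is the convex-combination inequality $\la_{\kf_t}\ge r\la_{\tilde\ka_t}$, which holds at every $t\in[0,1]$ by linearity of marginals. This single observation replaces the delicate four-case telescoping of \refL{L1}: by moving along a smooth path from $\kf'$ to $\kf$ instead of decomposing a two-point difference, we never need to ask whether $\ka_r$ or $\ka_r'$ is the "right" companion for each exponential factor, because everything is paired with the common $\tilde\ka_t$ and $\tilde g_i^{(t)}$ at each $t$.
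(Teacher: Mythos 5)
Your proof is correct, and it takes a genuinely different route from the paper's. The paper factorizes $W_E = f_1\cdots f_r\tW_E$ with $\tW_E = \ka_r^{\ba}$, $\ba=(r/d_1,\ldots,r/d_r)$, and $f_i(x_i)=\exp(-\sum_{s\ne r}s\la_{\ka_s}(x_i)/d_i)$; it then deduces the marginal bound and the Lipschitz continuity of $\kf\mapsto\tW_E$ by invoking \refL{L1h} (the hyperkernel analogue of the four-case telescoping in \refL{L1}), and finishes with the composition estimate \eqref{frw}. You instead differentiate along the linear path $\kf_t = t\kf+(1-t)\kf'$ and integrate the derivative, so the comparison is never between two arbitrary hyperkernels but always between a common interpolant $\kf_t$ and itself. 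This collapses the case analysis of \refL{L1} to a single pointwise inequality $\la_{\kf_t}\ge r\la_{\tilde\ka_t}$, which holds at every $t$ by linearity and non-negativity, combined with $ue^{-ru/d_i}\le d_i/(re)$. In particular, your argument is self-contained and bypasses \refL{L1h} entirely (which in the paper is used only to prove \refL{L1ha}). It also yields a cleaner absolute Lipschitz constant ($\tfrac12+\tfrac1e$, once one notes $\cn{\ka_r-\ka_r'}\le\tfrac1r\cn{\kf-\kf'}$). The same interpolation idea, applied to \refL{L1} in the $r=2$ case, would remove the four-case telescoping there as well. The only point to flag is a routine measure-theoretic one: pulling $\cn{\cdot}$ inside $\int_0^1$ requires a Fubini/Minkowski justification, which is standard since the integrand is dominated (the first term by $\ka_r+\ka_r'$, the second uniformly in $t$ by the $L^1$ bound you compute), and your marginal-bound computation does use the symmetry of $\ka_r$ to identify the different marginals $\la_{\ka_r}^{(i)}$, as does the paper.
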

\begin{proof}
Let $r$ be the number of vertices in $E$, and let $\kf=(\ka_s)_{s=2}^R$.
Let $\tW_E= \ka_r^{\ba}$, where $\ba=(r/d_1,\dots,r/d_r)$.
Since each $\ka_s$ is symmetric, all its marginals are equal; we
write $\la_s$ for any of these marginals.
Then $W_E = f_1\cdots f_r \tW_E$, where
\[
 f_i(x_i) = \exp\bb{-\la_{\kf}(x_i)/d_i +r\la_r(x_i)/d_i} = \exp\Bigpar{-\sum_{s\ne r}s\la_s(x_i)/d_i}.
\]
Since all marginals $\la_s$ are non-negative, we have $0<f_i(x)\le 1$.
Applying \refL{L1h} to $\ka_r$ tells us that $\tW_E\in \cwrA$, and that the map $\kf\mapsto \tW_E$
is Lipschitz continuous. Summing \eqref{hmc} over $2\le s\le R$, $s\ne r$, tells
us that each $f_i$ varies continuously (in $L^1$) with $\kf$,
and Lipschitz continuity of $\kf\mapsto W_E$ then follows from \eqref{frw}.
Finally, $\tW_E\in \cwrA$ and $0<f_i\le 1$ for each $i$ trivially implies $W_E\in\cwrA$.
\end{proof}

In the light of \eqref{txtyh} and \refL{L1ha}, it remains only to prove
an analogue of \refL{L2}, showing that $t_0(\HH,(W_E)_{E\in \HH})$
is Lipschitz continuous with respect to the cut norm when we assume
that each $W_E\in \cwrA$. The proofs of \refL{L2root} and \refL{L2} carry
over with trivial modifications, noting that for the latter
when we delete a single hyperedge $E$ with $r$ vertices, our hypertree
splits into $r$ hypertrees (some of which may be trivial).

\subsection{Small components}

With the preparation above behind us, the argument of~\refSS{ss_s} goes through
easily. Let us comment very briefly on the changes.
Firstly, it is more convenient in this subsection to consider hypergraphs throughout.

Given a hypergraph $\HH$, we write $N_k(\HH)$
for the number of vertices in components of order $k$,
$\nkt(\HH)$ for the number in tree components
of order $k$, and $\nkc(\HH)$ for the number in non-tree components.

The proof of \refL{l_Nkp} carries over easily to give the following result.
\begin{lemma}\label{l_Nkph}
Let $(H_n)$ be a  well-behaved $R$-bounded sequence of hypermatrices,
and $\HH_n=\HH(H_n)$ the corresponding random (Poisson multi-)hypergraphs.
Then for any fixed $k$ we have $\E \nkc(\HH_n) = o(n)$.
\end{lemma}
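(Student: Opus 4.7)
The plan is to mirror the proof of \refL{l_Nkp} by considering an evolving Poisson multi-hypergraph process. For each ordered tuple $(i_1,\ldots,i_r)$ with $2\le r\le R$ attach an independent Poisson process of rate $\hir/n^{r-1}$ on $[0,1]$, giving the birth times of hyperedges on that ordered tuple; then the unordered hyperedge $\{i_1,\ldots,i_r\}$ is born at rate $r!\hir/n^{r-1}$, and $\HH_n(t)$, the multi-hypergraph of all hyperedges born by time $t$, satisfies $\HH_n(1)\stackrel{\mathrm{d}}{=}\HH_n$. Let $M_{\le k}(\HH)$ denote the number of non-hypertree components of $\HH$ of order at most $k$, so that $\nkc(\HH)\le k\,M_{\le k}(\HH)$. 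Setting $f(t)=\E M_{\le k}(\HH_n(t))$, we have $f(0)=0$, so it suffices to show $f'(t)=o(n)$ uniformly in $t\in[0,1]$.

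Conditioning on $\HH_n(t)$, adding a new hyperedge $E=\{v_1,\ldots,v_r\}$ can raise $M_{\le k}$ by at most $1$, and only when $E$ creates a cycle within a component of order at most $k$. Using the hypertree identity $|V|=1+\sum_E(|E|-1)$, one checks that merging $s$ tree components via one new $r$-edge yields a hypertree precisely when $s=r$; hence $E$ creates a cycle iff at least two of its vertices lie in the same existing component. Call such an $E$ \emph{bad}. For any ``bad pair'' $\{u,v\}$---a pair of vertices lying in a common component of order $\le k$---the contribution to the birth-rate of bad hyperedges passing through $\{u,v\}$ is at most
\[
 \sum_{r=2}^{R} \frac{r!}{n^{r-1}} \sum_{\{w_3,\ldots,w_r\}\subset[n]\setminus\{u,v\}} h_{u,v,w_3,\ldots,w_r}
 \;\le\; \sum_{r=2}^{R} \frac{r(r-1)}{n}\, a^{(r)}_{uv} \;=\; \frac{a_{uv}}{n},
\]
where the factor $1/(r-2)!$ from converting the unordered inner sum into an ordered one combines with $r!$ to give $r(r-1)$, and the final equality invokes \eqref{Anrdef}--\eqref{Andef}.

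There are at most $\binom{k}{2}$ bad pairs per small component and at most $n$ such components, so at most $\binom{k}{2}n$ bad pairs in total. Combining this with $\max A_n = o(n)$ from well-behavedness,
\[
 f'(t) \;\le\; \sum_{\{u,v\}\text{ bad}} \frac{a_{uv}}{n} \;\le\; \binom{k}{2}n\cdot\frac{\max A_n}{n} \;=\; o(n),
\]
uniformly in $t$, so $f(1)=o(n)$ and $\E\nkc(\HH_n)\le k\,f(1)=o(n)$, as required. The only non-routine ingredient is the characterisation of cycle creation in hypertrees, but once this reduces to a bad-pair count the rest parallels \refL{l_Nkp}; the convenient identity $\sum_r r(r-1) a^{(r)}_{uv}=a_{uv}$ for the marginal absorbs the dependence on $r$, leaving us with a single scalar bound of the same form used in the graph case.
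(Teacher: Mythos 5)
Your proof is correct and follows essentially the same approach as the paper: an evolving Poisson multi-hypergraph process, bounding the rate of increase of $M_{\le k}$ by counting bad pairs (pairs lying in a common small component) and using well-behavedness to show the expected number of hyperedges through any fixed pair is $o(1)$. The only difference is cosmetic — you make the marginal calculation $\sum_{r}r(r-1)a^{(r)}_{uv}=a_{uv}$ explicit, whereas the paper invokes this consequence of well-behavedness without writing it out.
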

\begin{proof}
As in the graph case, we consider the number $M_{\le k}(\HH)$
of components of a hypergraph $\HH$ that contain a cycle and have
at most $k$ vertices. Since $\nkc(\HH_n)\le kM_{\le k}(\HH)$, it suffices
to prove that $\E M_{\le k}(\HH_n)=o(n)$.

When adding a hyperedge $E$ to a hypergraph $\HH$,
the quantity $M_{\le k}$ can increase only if $E$ creates a cycle, i.e., contains
at least two vertices $i$ and $j$ from some component $C$ of $\HH$,
and after adding $\HH$, the component containing $E$ has order at most 
$k$. This certainly implies that $E$ contains a pair $\{i,j\}$ of
distinct vertices from some component of order at most $k$.
The rest of the proof follows that of \refL{l_Nkp},
using the fact that $(H_n)$ well behaved guarantees that the expected
number of edges of $\HH_n$ containing a particular
pair $\{i,j\}$ of vertices is $o(1)$,
uniformly in $i$ and $j$.
\end{proof}

The remaining arguments in \refSS{ss_s} carry over easily.
\begin{proof}[Proof of \refL{lh1}]
Let $(H_n)$ be a sequence of $R$-bounded hypermatrices converging in $\dcut$
to an $R$-bounded hyperkernel $\kf$.
By \refC{cwb} we may assume that $(H_n)$ is well behaved.

Given a hyperedge $E=i_1\ldots i_r$ with vertices contained in $[n]$,
let $h_E=h_{i_1\ldots i_r}$ be the corresponding entry of $H_{n,r}$,
and $\mu_E=r! h_E n^{-(r-1)}$
the expected number of copies of $E$ in $\HH_n=\HH(H_n)$.
Given a connected simple hypergraph
$\F$
on $[k]$ and a sequence $\bv=(v_1,\ldots,v_k)$
of vertices of $\HH_n$, for each hyperedge $E=i_1\ldots i_r$
of $\F$ let $\bv(E)=v_{i_1}\ldots v_{i_r}$ be the image of $E$ 
under the map $i\mapsto v_i$.

As before, for a good sequence $\bv$,
let $p_{\bv}(\F)=p_{\bv}(\F,H_n)$ be the probability that the image
of $\F$ under $i\mapsto v_i$ is present in $\HH_n$, and forms a component
of $\HH_n$.
Thus
\[
 p_{\bv}(\F) =  \prod_{E\in E(\F)} \mu_{\bv(E)} \prod_{E\in E_0} \exp(-\mu_E),
\]
where $E_0$ is the set of all potential edges of $\HH_n$ that share at least one
vertex with $\{v_1,\ldots,v_k\}$.
For any $\bv$, set
\[
 p^0_{\bv}(\F) =  \prod_{E\in E(\F)} \mu_{\bv(E)} \prod_{i=1}^k \exp(-\la_n(v_i)),
\]
where $\la_n(v)$ is the sum of the probabilities of all hyperedges meeting
$v$. Note that $\la_n$ is exactly the marginal of the hyperkernel corresponding
to $H_n$, but here viewed as a function on $[n]$ rather than on $[0,1]$.

If $\bv$ is good, the only difference between
$p^0_{\bv}(\F)$ and $p_{\bv}(\F)$
is that for each $E\in E_0$ sharing $s\ge 2$ vertices with $\{v_1,\ldots,v_k\}$,
the factor $\exp(-\mu_E)$ appears $s$ times in $p^0_{\bv}(\F)$ but only once
in $p_{\bv}(\F)$. Since $(H_n)$ is well behaved, for any $i\ne j$
the sum of $\mu_E$ over hyperedges $E$ containing both $i$ and $j$ is $o(1)$,
so it follows as before that $ p^0_{\bv}(\F) \sim  p_{\bv}(\F)$.

Let $\T$ 
be a hypertree. Summing $p^0_{\bv}(\T)$ over {\em all} sequences
$\bv$ we obtain exactly $n \tx(\T,\kf)$. 
The rest of the proof of \refL{l_Nk} goes through essentially unchanged to show that
the contribution from bad sequences $\bv$ is negligible, and summing over
hypertrees $\T$, and using \refL{l_Nkph}, it follows that
$\E N_k(\HH_n)/n \to\rho_k(\kf)$.
(Note that \eqref{xk=t} holds unchanged for hypergraphs too, with the
normalizations used here.)
As before, considering disjoint copies of two trees gives convergence in probability,
as required.
\end{proof}

Finally, we note that the result we have just proved extends from $R$-bounded hyperkernels
to general hyperkernels.

\begin{corollary}\label{c_nk}
Let $\kf$ be an integrable hyperkernel and $(H_n)$ a sequence
of hypermatrices with $\dcut(H_n,\kf)\to 0$, and set $G_n=G(H_n)$.
Then $N_k(G_n)/n\pto \rho_k(\kf)$.
\end{corollary}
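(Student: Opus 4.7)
The plan is to approximate $\kf$ by its $R$-bounded truncation $\kf^R=(\ka_2,\ldots,\ka_R,0,0,\ldots)$, truncate the hypermatrices analogously to $H_n^R$, apply Lemma~\ref{lh1} to the truncated objects, and then let $R\to\infty$. The proof then splits into three ingredients that combine via a standard $\eps/3$ argument.

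First I would verify that $\dcut(H_n^R,\kf^R)\to 0$ for every fixed $R$. Since the cut norm on hyperkernels is a sum of non-negative terms indexed by $r$, any rearrangement $\rn$ satisfies $\cn{\kf^R(H_n)^{(\rn)}-\kf^R}\le\cn{\kf(H_n)^{(\rn)}-\kf}$, so truncation can only decrease the cut distance. Lemma~\ref{lh1} then yields $N_k(G(H_n^R))/n\pto\rhok(\kf^R)$.

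Next I would bound $\E|N_k(G(H_n))-N_k(G(H_n^R))|$ by coupling $\HH(H_n^R)\subseteq\HH(H_n)$ and noting that the two graphs differ only in the extra hyperedges of size $>R$. A single $r$-vertex hyperedge, viewed as a clique in the underlying graph, merges at most $r$ components and therefore changes $N_k$ by at most $(r+1)k$. Since the expected number of size-$r$ hyperedges in $\HH(H_n)$ is at most $n\int\ka_r(H_n)$, we get
\[
 \E|N_k(G(H_n))-N_k(G(H_n^R))|\;\le\;2kn\sum_{r>R}r\int\ka_r(H_n).
\]
Now $\cn{\kf(H_n)-\kf}=\sum_r r\cn{\ka_r(H_n)-\ka_r}\to 0$, so applying the trivial bound $\bigabs{\int W}\le\cn{W}$ term by term gives $\sum_r r\bigabs{\int\ka_r(H_n)-\int\ka_r}\to 0$. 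Hence, for each fixed $R$, $\sum_{r>R}r\int\ka_r(H_n)\to\sum_{r>R}r\int\ka_r$ as $n\to\infty$, and the latter tends to $0$ as $R\to\infty$ by the integrability assumption $i(\kf)<\infty$. Choosing $R$ and then $n$ large, the error ratio can be made arbitrarily small.

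The third ingredient is $\rhok(\kf^R)\to\rhok(\kf)$. Writing $\rhok(\kf)=k\sum_T\tx(T,\kf)/\aut(T)$ as a finite sum over isomorphism classes of hypertrees $T$ on $k$ vertices (as in the proof of Lemma~\ref{lh1}), note that for $R\ge k$ all hyperedges of $T$ have size $\le R$, so only the vertex factors $e^{-\la_{\kf^R}(x_i)}$ in the integrand change with $R$. Since $\la_{\kf^R}(x)\uparrow\la_{\kf}(x)$ pointwise in $x$ by monotone convergence, these factors decrease pointwise to $e^{-\la_{\kf}(x_i)}$, and dominated convergence (using the $R=k$ integrand, which is finite by \refT{th_FWh}) gives $\tx(T,\kf^R)\to\tx(T,\kf)$. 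Summing over the finitely many $T$ completes this step.

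The main delicate point is the tail estimate in the second step: integrability of $\kf$ by itself would not suffice to control $\sum_{r>R}r\int\ka_r(H_n)$ uniformly in $n$, but the weighting by $r$ already built into the definition of $\cn{\cdot}$ on hyperkernels (and hence into $\dcut$) provides exactly the summability needed. With the three ingredients in hand, the triangle inequality
\[
 \bigabs{N_k(G(H_n))/n-\rhok(\kf)}\le \bigabs{N_k(G(H_n))/n-N_k(G(H_n^R))/n}+\bigabs{N_k(G(H_n^R))/n-\rhok(\kf^R)}+\bigabs{\rhok(\kf^R)-\rhok(\kf)}
\]
yields $N_k(G(H_n))/n\pto\rhok(\kf)$.
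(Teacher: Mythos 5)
Your proof is correct and follows the same truncation strategy as the paper: truncate to $\kf^R$ and $H_n^R$, apply \refL{lh1}, control the tail, and combine. The three ingredients match the paper's proof step for step, including the observation that the coordinate-wise structure of the hyperkernel cut norm means $\dcut(H_n^R,\kf^R)\le\dcut(H_n,\kf)$, and the coupling argument controlling $\E|N_k(G(H_n))-N_k(G(H_n^R))|$ via $\sum_{r>R}r\int\ka_r(H_n)$ (the paper phrases this via $i(\kf(H_n)-\kf(H_n^R))\le i(\kf-\kf^R)+\dcut(\kf(H_n),\kf)$, which is equivalent to your term-by-term application of $|\int W|\le\cn{W}$). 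The one genuine departure is the third ingredient: where the paper simply cites Theorem~2.13(i) of the clustering paper for $\rho_k(\kf^R)\to\rho_k(\kf)$, you supply a direct self-contained argument using the hypertree-integral representation $\rho_k(\kf)=k\sum_T\tx(T,\kf)/\aut(T)$, the fact that for $R\ge k$ only the vertex factors $e^{-\la_{\kf^R}(x_i)}$ depend on $R$, and monotone/dominated convergence with the $R=k$ integrand (finite by \refT{th_FWh}) as the dominating function. This is a sound alternative that makes the present paper less dependent on the external reference, at the modest cost of an extra paragraph; the paper opts for brevity by citing.
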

\begin{proof}
Firstly, it makes no difference whether we work with the hypergraphs
$\HH_n=\HH(H_n)$ or the underlying
graphs $G_n=G(H_n)$, as these have
exactly the same components.

Fix $k\ge1$.
Let $\kf=(\ka_r)_{r\ge 2}$. For $R\ge2$, set $\kf^R=(\ka_r)_{r=2}^R$,
and similarly define $H_n^R$ by omitting all matrices $H_{n,r}$ with $r> R$.
Fix $\eps>0$.
Since $\kf$ is integrable, we have $i(\kf^R)\upto i(\kf)$ as $R\to\infty$.
By Theorem 2.13(i) of~\cite{clustering}, we have $\rho_k(\kf^R)\to \rho_k(\kf)$.
Hence there is some $R$ such that $i(\kf-\kf^R)\le\eps$ and
\begin{equation}\label{close}
 |\rho_k(\kf)-\rho_k(\kf^R)|\le \eps.
\end{equation}
Fix such an $R$.
From the definition of $\dcut$, we have 
\begin{align*}
 i\bigpar{\kf(H_n)-\kf(H_n^R)} & \le i(\kf-\kf^R)+\dcut\bigpar{\kf(H_n)-\kf(H_n^R),\kf-\kf^R} \\
   &\le \eps +\dcut(\kf(H_n),\kf) = \eps+o(1).
\end{align*}
Coupling $\HH_n$ and $\HH_n^R=\HH(H_n^R)$ in the natural way so that the former contains
the latter, the expected sum of the sizes of the extra hyperedges in $\HH_n$
is at most $n i\bigpar{\kf(H_n)-\kf(H_n^R)}\le (\eps+o(1))n$.
Since adding a clique of size $r$ to a graph $G$ changes the number of vertices
in components of size at most $k$ by at most $rk$, it follows that for $k$ fixed we have
$\E | N_k(\HH_n)- N_k(\HH_n^R) | \le k\eps n+o(n)$, so for $n$ large enough,
\begin{equation}\label{nkhr}
 \Pr\bigpar{ | N_k(\HH_n)- N_k(\HH_n^R) | \ge k\sqrt{\eps} } \le 2\sqrt{\eps},
\end{equation}
say.
Applying \refL{lh1} to the sequence $(H_n^R)$, we have $N_k(\HH_n^R)=\rho_k(\kf^R)+\op(n)$.
Since $\eps>0$ was arbitrary, the result follows
from this, \eqref{close} and \eqref{nkhr}.
\end{proof}

\subsection{Proof of \refT{th1h}}

We have just seen that for each $k$ we have the `right' number of vertices
of $G(H_n)$ in components of order $k$; it remains only to show,
using the additional assumption of irreducibility, that almost all vertices
in large components in fact form a single giant component.

\begin{proof}[Proof of \refT{th1h}.]
As usual, \refC{c_nk} implies that there is some $\omega=\omega(n)\to\infty$,
which we may take to be $o(n)$, such that
\begin{equation}\label{hbig}
 N_{\ge \omega}(G(H_n))/n \pto \rho(\kf).
\end{equation}

Let $G_n=G(H_n)$.  As in the proof of \refT{th1}, in the light of \eqref{hbig}
it suffices to show that $C_1(G_n)\ge \rho(\kf)n+\op(n)$.
In doing so we may of course assume that $\rho(\kf)>0$.

Fix $\eps>0$.
Theorem 2.12(i) of~\cite{clustering} tells us that as $\gamma\to 0$
we have $\rho((1-\gamma)\kf)\upto \rho(\kf)$, so there
is some $\gamma$ with $\rho((1-\gamma)\kf)>\rho(\kf)-\eps$.
In the Poisson multi-hypergraph form, we may write
$\HH_n=\HH(H_n)$ as $\HH_n'\cup \HH_n''$ where
 $\HH_n'=\HH((1-\gamma)H_n)$, $\HH_n''=\HH(\gamma H_n)$, and
$\HH_n'$ and $\HH_n''$ are independent.

Writing $G_n'$ for the graph corresponding to $\HH_n'$,
applying \eqref{hbig} to $(\HH_n')$
there is some $\omega=\omega(n)\to\infty$ such that
\begin{equation*}
 N_{\ge\omega}(G_n')\ge (\rho((1-\gamma)\kf)-\eps)n \ge (\rho(\kf)-2\eps) n
\end{equation*}
holds whp. We shall attempt to use the hyperedges of $\HH_n''$ 
to join up the large components of $G_n'$.

As in~\cite{clustering}, the trick is to select one edge from each hyperedge,
to obtain a graph. More precisely, let $G_n''$ be the random multi-graph obtained
from $\HH_n''$ by replacing each hyperedge $E$ of order $r$ by one 
of the $\binom{r}{2}$ corresponding edges, chosen uniformly at random.
From the Poisson nature of the model, different edges in $G_n''$ are present
independently.

Let $B_n=2\sum_{r\ge 2} A_{n,r}$, where $A_{n,r}$ is the matrix defined by
\eqref{Anrdef}. The edge probabilities in $G_n''$ are given
by $\gamma$ times the entries of $B_n$. (Note that the coefficient of $A_{n,r}$ is smaller
here than in \eqref{Andef}, by a factor $1/\binom{r}{2}$, corresponding
to choosing one out of $\binom{r}{2}$ edges.)

Let $\tau$ be the rescaled edge-kernel defined by
\begin{equation*}
  \tau(x,y) = 2\sum_{r\ge 2} \int_{\sss^{r-2}}\ka_r(x,y,x_3,x_4,\ldots,x_r)
   \dd\mu(x_3)\cdots\dd\mu(x_r),
\end{equation*}
i.e., by replacing the factor $r(r-1)$ in \eqref{ce} by a factor $2$.
Using~\eqref{cmarg} and arguing as in the proof of Lemma~\ref{edgek},
but replacing each appearance of $r(r-1)$ by $2$, it
is easy to check that $\dcut(\ka_{B_n},\tau) \to 0$; 
this time, since $2\le r$, there is no need to truncate the sums over $r$.

Now $\kf$ is irreducible by assumption, which means exactly that $\kae$ is irreducible.
Since $\kae$ and $\tau$ are non-zero in the same places, it follows that $\tau$
is irreducible.
Since the graphs $G_n''$ have the distribution $G(\gamma B_n)$, and $\dcut(B_n,\tau)\to 0$,
Lemma~\ref{l_join} tells us that given any two sets $X$ and $Y$ of $\eps n$ vertices
of $G_n''$, the probability that there is no path in $G_n''$ from $X$ to $Y$ is exponentially
small. As before we may apply this to all partitions of the large
components
of $G_n'$ into two sets each containing at least $\eps n$ vertices
to deduce that whp we have $C_1(G_n)\ge (\rho(\kf)-3\eps)n$, completing the proof.
\end{proof}

Theorem~\ref{th1h} implies a result for branching processes
corresponding to \refT{th_br}; we  
leave the details to the reader.

Finally, let us note that using the trick of selecting one edge from
each hyperedge 
above, it is very easy to extend \refT{th_stab} to the graphs $G(H_n)$
considered in \refT{th1h}.

\newcommand\arxiv[1]{\webcite{arXiv:#1.}}
\newcommand\webcite[1]{
\texttt{\def~{{\tiny$\sim$}}#1}\hfill\hfill}

\begin{ack}
Part of this research was conducted during the programme
`Combinatorics and Statistical Mechanics'
at the Isaac Newton Institute, Cambridge; we are grateful to the Institute
and to the programme organizers.
\end{ack}

\end{document}